\newcommand{\Cdb}{\ensuremath{\mathbb{C}}}
\newcommand{\Ddb}{\ensuremath{\mathbb{D}}}
\newcommand{\Ndb}{\ensuremath{\mathbb{N}}}
\newcommand{\Pdb}{\ensuremath{\mathbb{P}}}
\newcommand{\Zdb}{\ensuremath{\mathbb{Z}}}
\renewcommand{\P}{\mbox{${\mathcal P}$}}
\newcommand{\norm}[1]{\Vert#1\Vert}
\newcommand{\Biggnorm}[1]{\Biggl\Vert#1\Biggr\Vert}
\newtheorem{theorem}{Theorem}[section]
\newtheorem{lemma}[theorem]{Lemma}
\newtheorem{corollary}[theorem]{Corollary}
\newtheorem{proposition}[theorem]{Proposition}
\newtheorem{definition}[theorem]{Definition}
\theoremstyle{remark}
\newtheorem{remark}[theorem]{\bf Remark}
\theoremstyle{definition}
\numberwithin{equation}{section}
\begin{document}

\title[$H^\infty$-functional calculus for commuting families]
{$H^\infty$-functional calculus for commuting families of Ritt operators and sectorial operators}

\author[O. Arrigoni]{Olivier Arrigoni}
\email{olivier.arrigoni@univ-fcomte.fr}
\author[C. Le Merdy]{Christian Le Merdy}
\email{clemerdy@univ-fcomte.fr}
\address{Laboratoire de Math\'ematiques de Besan\c con, UMR 6623, 
CNRS, Universit\'e Bourgogne Franche-Comt\'e,
25030 Besan\c{c}on Cedex, FRANCE}

\date{\today}

\maketitle

\begin{abstract}
We introduce and investigate $H^\infty$-functional calculus for 
commuting finite families of Ritt operators on Banach space $X$. 
We show that if either $X$ is a Banach lattice or $X$ or $X^*$
has property $(\alpha)$, then a commuting $d$-tuple
$(T_1,\ldots, T_d)$ of Ritt operators on $X$ has an
$H^\infty$ joint functional calculus if and only if 
each $T_k$ admits an $H^\infty$ functional calculus. Next for
$p\in(1,\infty)$,
we characterize commuting $d$-tuple
of Ritt operators on $L^p(\Omega)$
which admit an $H^\infty$ joint functional calculus, by a 
joint dilation property. 
We also obtain a 
similar characterisation for operators acting on a UMD Banach
space with property $(\alpha)$. Then we study commuting $d$-tuples
$(T_1,\ldots, T_d)$ of Ritt operators on Hilbert space. In 
particular we show that if $\norm{T_k}\leq 1$ for every $k=1,\ldots,d$, then
$(T_1,\ldots, T_d)$ satisfies a multivariable analogue of von Neumann's
inequality. Further we show analogues of most of the above results 
for commuting finite families of sectorial operators. 
\end{abstract}

\vskip 1cm
\noindent
{\it 2000 Mathematics Subject Classification :} 47A60, 47D06, 47A13.

\vskip 1cm

\section{Introduction}
$H^\infty$-functional calculus of
Ritt operators on Banach spaces has received a lot 
of attention recently, in connection with
discrete square functions, maximal inequalities
for discrete semigroups and ergodic theory. 
See in particular \cite{AFLM, AL, CCL, HH, LM, LMX, LMX2} 
and the references therein. This topic is closely related to 
$H^\infty$-functional calculus of sectorial operators, which itself
is fundamental for the study of harmonic analysis of semigroups and 
regularity of evolution problems. Many 
functional calculus results on sectorial operators turn out to
have discrete versions for Ritt operators, however with different 
fields of applications. We refer the reader to \cite{HH, Haa, KuW} for general
informations on $H^\infty$-functional calculus of sectorial operators.

Te main purpose of this paper is to investigate $H^\infty$-functional calculus for 
commuting finite families of Ritt operators. On the one hand, this naturally
relates to the longstanding studied polynomial functional calculus associated
to a commuting family of Hilbert space contractions and to extensions
of von Neumann's inequality. On the other hand, this is a natural 
discrete analogue of $H^\infty$-functional calculus
for commuting finite families of sectorial operators
considered in \cite{Al} and \cite{FMI} (see also \cite{LLLM} and \cite{KW1}).

For any $\gamma\in (0,\frac{\pi}{2})$, let $B_\gamma$ denote the Stolz domain
of angle $\gamma$. Given
a $d$-tuple $(T_1,\ldots,T_d)$ of commuting Ritt operators on some
Banach space $X$, we say that it admits an 
$H^\infty(B_{\gamma_1} \times \cdots \times B_{\gamma_d})$ 
joint functional calculus if it satisfies an estimate
$$
\left\| f(T_1,\ldots,T_d) \right\| \leq K \left\| f \right\|_{\infty, 
B_{\gamma_1} \times \cdots \times B_{\gamma_d}}
$$
for a large class of bounded holomorphic functions $f\colon
B_{\gamma_1} \times \cdots \times B_{\gamma_d}\to\Cdb$. Here the notation
$\left\| f \right\|_{\infty, 
B_{\gamma_1} \times \cdots \times B_{\gamma_d}}$ stands  for the
supremum norm of $f$ on $B_{\gamma_1} \times \cdots \times B_{\gamma_d}$.
See Section \ref{FC} for precise definitions and basic properties 
of functional calculus associated with $(T_1,\ldots,T_d)$. 
These extend the definitions and properties established in \cite{LM}
for a single Ritt operator.

Let us now present the main results of this paper. 
In Section \ref{Automatic} we prove the following.

\begin{theorem} \label{Th11}
Let $X$ be a Banach space. Assume that either $X$ is a Banach lattice,
or $X$ or $X^*$ has property $(\alpha)$.
Let $(T_1,\ldots,T_d)$ be a commuting $d$-tuple of Ritt operators on $X$ and assume that 
for some $0<\gamma_1,\ldots,\gamma_d<\frac{\pi}{2}$, $T_k$ has an $H^\infty(B_{\gamma_k})$ functional
calculus for any $k=1,\ldots,d$. Then for any $\gamma_k'\in(\gamma_k,\frac{\pi}{2}),\, 
k=1,\ldots,d$, $(T_1,\ldots,T_d)$ admits an 
$H^\infty(B_{\gamma_1'}\times\cdots\times B_{\gamma_d'})$ joint functional
calculus.
\end{theorem}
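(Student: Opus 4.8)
The plan is to reduce the multivariable estimate to a single‑variable fact that is essentially already available in the references cited above: under each of our geometric assumptions, a bounded $H^\infty(B_{\gamma_k})$ functional calculus of a Ritt operator automatically improves to an \emph{$R$-bounded} $H^\infty(B_{\gamma_k''})$ functional calculus on any slightly larger Stolz domain, i.e.\ the set $\{f(T_k):\|f\|_{\infty,B_{\gamma_k''}}\le 1\}$ is $R$-bounded for every $\gamma_k''\in(\gamma_k,\gamma_k')$. When $X^*$ has property $(\alpha)$ rather than $X$, I would first pass to the adjoint $d$-tuple $(T_1^*,\ldots,T_d^*)$, which consists of commuting Ritt operators on $X^*$ with the same $H^\infty$ functional calculus properties (the Stolz domains are symmetric about the real axis), establish the inequality there, and dualize back via $f(T_1,\ldots,T_d)^*=\tilde f(T_1^*,\ldots,T_d^*)$, where $\tilde f(z)=\overline{f(\bar z)}$.

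Next I would fix $f$ in a convenient dense subclass of $H^\infty(B_{\gamma_1'}\times\cdots\times B_{\gamma_d'})$ — bounded holomorphic functions decaying fast enough as $z_k\to 1$ in each variable — for which, by the definitions of Section \ref{FC} (extending \cite{LM}), the joint functional calculus is given by the absolutely convergent iterated Cauchy integral
\[
f(T_1,\ldots,T_d)=\frac{1}{(2\pi i)^d}\int_{\partial B_{\gamma_1''}}\!\cdots\!\int_{\partial B_{\gamma_d''}} f(\lambda_1,\ldots,\lambda_d)\,\prod_{k=1}^d(\lambda_k-T_k)^{-1}\,d\lambda_d\cdots d\lambda_1 .
\]
Along the contour $\partial B_{\gamma_k''}$ one has $(\lambda_k-T_k)^{-1}=\varphi_{\lambda_k}(T_k)$ with $\varphi_{\lambda_k}\in H^\infty(B_{\gamma_k''})$ and $\|\varphi_{\lambda_k}\|_{\infty,B_{\gamma_k''}}\le C\,w_k(\lambda_k)$ for a weight $w_k$ integrable along $\partial B_{\gamma_k''}$ — this is exactly the estimate that makes a single Ritt operator's integral converge. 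So, up to fixed constants, the integrand is a product $\prod_k\varphi_{\lambda_k}(T_k)$ of elements drawn from the $d$ $R$-bounded families above, weighted by the $L^1$ kernel $|f(\lambda)|\prod_k w_k(\lambda_k)$.

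The crucial step is to combine the $d$ separate $R$-bounded families into a single $R$-bound on $\{\prod_{k=1}^d\varphi_k(T_k):\|\varphi_k\|_{\infty,B_{\gamma_k''}}\le 1\}$. Since operators from different families commute, property $(\alpha)$ of $X$ is precisely what permits this tensorisation; in the Banach lattice case one argues instead through the associated $\ell^2$-valued (square function) estimates, which multiply across the variables. With this uniform $R$-bound $\mathcal{R}_0$ in hand, the standard principle that the Bochner integral of an $R$-bounded family against an $L^1$ kernel has norm at most $\mathcal{R}_0$ times the $L^1$-norm of the kernel yields
\[
\|f(T_1,\ldots,T_d)\|\ \le\ C^d\,\mathcal{R}_0\,\Big\|\,\prod_{k=1}^d w_k\,\Big\|_{1}\,\|f\|_{\infty,B_{\gamma_1'}\times\cdots\times B_{\gamma_d'}} ,
\]
which is the desired estimate on the dense subclass; a routine approximation argument (Section \ref{FC}) extends it to all of $H^\infty(B_{\gamma_1'}\times\cdots\times B_{\gamma_d'})$.

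I expect the main obstacle to be this tensorisation of $R$-boundedness together with the bookkeeping around it: checking that property $(\alpha)$ (respectively the square function formulation on a Banach lattice) genuinely upgrades $d$ commuting $R$-bounded families to a jointly $R$-bounded family of products with a finite, dimension‑dependent constant, and that the iterated integral and the approximation step are compatible with passing to these $R$-bounds. An alternative organisation is an induction on $d$: regard $f(\,\cdot\,,T_2,\ldots,T_d)$ as an $H^\infty(B_{\gamma_1''})$ function valued in the (already $R$-bounded) joint calculus of $(T_2,\ldots,T_d)$ and apply the operator-valued single-variable $H^\infty$ calculus of $T_1$; this trades the $d$-fold combinatorics for a cleaner recursion but still relies on the same geometric input.
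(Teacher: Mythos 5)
There is a genuine gap at the step you call crucial, and it occurs before the tensorisation issue you flag. You claim that along the contour one can write $(\lambda_k-T_k)^{-1}=\varphi_{\lambda_k}(T_k)$ with $\Vert\varphi_{\lambda_k}\Vert_{\infty,B_{\gamma_k''}}\leq C\,w_k(\lambda_k)$ for a weight $w_k$ that is \emph{integrable} along the contour. This is false. First, if the contour is $\partial B_{\gamma_k''}$ itself, then $z\mapsto(\lambda_k-z)^{-1}$ is unbounded on $B_{\gamma_k''}$, so it is not in $H^\infty(B_{\gamma_k''})$ at all; if instead you take the contour $\partial B_{\beta_k}$ with $\beta_k>\gamma_k''$, then since the contour and the Stolz domain touch at the point $1$, the best possible bound is $w_k(\lambda_k)\approx |1-\lambda_k|^{-1}$, and $\int_{\partial B_{\beta_k}}|1-\lambda|^{-1}\,|d\lambda|=\infty$ (logarithmic divergence at $1$). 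In the single-variable theory the absolute convergence of the Cauchy integral comes from the decay of $f\in H^\infty_0$ near $1$, not from an integrable resolvent weight; if you absorb that decay into the kernel, the resulting bound involves the $H^\infty_0$ decay constants of $f$, not $\norm{f}_{\infty}$. So the displayed inequality $\norm{f(T_1,\ldots,T_d)}\leq C^d\mathcal{R}_0\,\Vert\prod_k w_k\Vert_1\,\norm{f}_\infty$ is vacuous. A quick sanity check confirms the mechanism cannot work: your estimate never uses $R$-boundedness in an essential way (a uniformly bounded family integrated against an $L^1$ kernel would do), so if it were valid it would show that \emph{every} Ritt operator has a bounded $H^\infty$ calculus with no hypothesis whatsoever, contradicting known examples (already on Hilbert space, Ritt operators not similar to a contraction). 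The genuine way to exploit $R$-boundedness here is your ``alternative organisation'' (an operator-valued, Kalton--Weis-type $H^\infty$ calculus applied inductively in each variable), but that route needs the automatic upgrade from bounded to $R$-bounded calculus, which is available under property $(\alpha)$ and not for a general Banach lattice (a lattice such as $c_0$ has neither finite cotype nor property $(\alpha)$), so it would not cover the full hypothesis of the theorem.

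For comparison, the paper proceeds quite differently: it proves a Franks--McIntosh-type decomposition on products of Stolz domains (Theorem \ref{FranksMcIntoshplusvariables}), writing $h$ as $\sum a_{i_1,\ldots,i_d}\Psi_{1,i_1}\tilde{\Psi}_{1,i_1}\cdots\Psi_{d,i_d}\tilde{\Psi}_{d,i_d}$ with $|a_{i_1,\ldots,i_d}|\lesssim\norm{h}_\infty$ and each family $(\Psi_{k,i})_i$, $(\tilde\Psi_{k,i})_i$ absolutely summable on $B_{\gamma_k}$. It then pairs $h_n(T_1,\ldots,T_d)x$ with $x^*$ and invokes a bilinear Rademacher property $(A_d)$ (Propositions \ref{prop32}--\ref{prop34}), verified for Banach lattices via Krivine's functional calculus and Khintchine's inequality, and for spaces with $(\alpha)$ (or dual with $(\alpha)$) via Cauchy--Schwarz plus the $d$-variable version of $(\alpha)$. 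Only the plain scalar $H^\infty(B_{\gamma_k})$ calculus of each single $T_k$ is used, applied to the randomized sums $\sum_i r_i(t)\Psi_{k,i}$, whose sup norms are uniformly controlled by the absolute summability; no $R$-bounded calculus and no resolvent-integral estimate against $\norm{f}_\infty$ is ever needed. If you want to salvage your approach, you would have to either prove the Ritt analogue of the operator-valued $H^\infty$ calculus theorem and restrict to the property-$(\alpha)$ case, or adopt a decomposition of $f$ of Franks--McIntosh type as the paper does.
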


Note that this property does not hold true on general Banach spaces.

In Section \ref{Dilations} we characterize $H^\infty$ joint functional calculus on 
$L^p$-spaces, for $p \in (1,\infty)$), as follows.

\begin{theorem} \label{Th12}
Let $\Sigma$ be a measure space and 
let $p \in (1,\infty)$. Let $T_1,\ldots,T_d$ be commuting Ritt operators 
on $L^p(\Sigma)$. Then the $d$-tuple  $(T_1,\ldots,T_d)$ admits an 
$H^\infty(B_{\gamma_1} \times \cdots \times B_{\gamma_d})$ 
joint functional calculus for some $\gamma_k\in(0,\frac{\pi}{2})$, 
$k=1,\ldots,d$,
if and only if 
there exist a measure space $\Omega$, commuting positive 
contractive Ritt operators $R_1,\ldots,R_d$ on $L^p(\Omega)$, 
and two bounded operators $J \colon 
L^p(\Sigma) \to L^p(\Omega)$ and $Q \colon L^p(\Omega) \to L^p(\Sigma)$
such that   
\begin{equation*}
T_1^{n_1} \cdots T_d^{n_d} = Q R_1^{n_1} \cdots R_d^{n_d} J, 
\qquad (n_1,\ldots,n_d) \in \mathbb{N}^d.
\end{equation*}
\end{theorem}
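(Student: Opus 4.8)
The statement is an equivalence; the reverse implication (dilation $\Rightarrow$ joint functional calculus) is the routine one, while the forward implication is the heart of the matter.

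\emph{Dilation implies joint functional calculus.} Suppose $T_1^{n_1}\cdots T_d^{n_d}=Q\,R_1^{n_1}\cdots R_d^{n_d}\,J$ for all $(n_1,\ldots,n_d)\in\Ndb^d$. For each $k$, both $R_k$ and its adjoint $R_k^*$ are positive contractive Ritt operators, on $L^p(\Omega)$ and $L^{p'}(\Omega)$ respectively, so by Le Merdy and Xu \cite{LMX} they satisfy square function estimates, and hence by \cite{LM} each $R_k$ admits a bounded $H^\infty(B_{\theta_k})$ functional calculus for some $\theta_k\in(0,\frac{\pi}{2})$. Since $L^p(\Omega)$ is a Banach lattice, Theorem \ref{Th11} applied to $(R_1,\ldots,R_d)$ furnishes, for any $\theta_k'\in(\theta_k,\frac{\pi}{2})$, a bounded $H^\infty(B_{\theta_1'}\times\cdots\times B_{\theta_d'})$ joint functional calculus, say with constant $K$. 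Applying the dilation identity monomial by monomial gives $f(T_1,\ldots,T_d)=Q\,f(R_1,\ldots,R_d)\,J$ for every polynomial $f$, so $\|f(T_1,\ldots,T_d)\|\le\|Q\|\,\|J\|\,K\,\|f\|_{\infty,B_{\theta_1'}\times\cdots\times B_{\theta_d'}}$; by the approximation properties of the joint functional calculus from Section \ref{FC} this extends to all of $H^\infty(B_{\theta_1'}\times\cdots\times B_{\theta_d'})$, which is the conclusion with $\gamma_k=\theta_k'$.

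\emph{Joint functional calculus implies dilation.} Assume $(T_1,\ldots,T_d)$ has an $H^\infty(B_{\gamma_1}\times\cdots\times B_{\gamma_d})$ joint functional calculus on $L^p(\Sigma)$. My plan is first to use the Banach space geometry of $L^p(\Sigma)$ — property $(\alpha)$ and finite cotype — to upgrade the boundedness of this joint functional calculus to joint square function estimates, both for $(T_1,\ldots,T_d)$ and for $(T_1^*,\ldots,T_d^*)$, with the scalar $\ell^2$ replaced by $\ell^2(\Ndb^d)$; and then to feed these into a dilation construction generalizing the single-variable theorem for Ritt operators on $L^p$ of Arhancet, Fackler and Le Merdy \cite{AFLM}. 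Concretely, one would realize the dilation space as an $L^p$-space over an enlargement of $\Sigma$ by $d$ mutually independent copies of the $\Zdb$-shift model, let $R_1,\ldots,R_d$ act one in each shift direction so that they automatically commute and are positive, contractive and Ritt, and assemble $J$ and $Q$ out of the (adjoints of the) joint square function maps $L^p(\Sigma)\to L^p(\Omega)$. An alternative is to dilate one variable at a time: apply the single-variable theorem to $T_1$ to get $R_1$ on $L^p(\Omega_1)$ with maps $J_1,Q_1$, simultaneously lift $T_2,\ldots,T_d$ to commuting operators on $L^p(\Omega_1)$ that are intertwined by $J_1,Q_1$ and still carry a joint $H^\infty$ functional calculus together with $R_1$, then iterate and compose the maps.

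\emph{The main obstacle} in either route is simultaneity: one must produce a single $\Omega$ carrying $d$ mutually commuting operators $R_1,\ldots,R_d$ that are at once positive, contractive and Ritt, with a single pair $(J,Q)$ intertwining all the products $T_1^{n_1}\cdots T_d^{n_d}$. The naive attempt $\Omega=\Omega_1\times\cdots\times\Omega_d$ with $R_k$ acting on the $k$-th factor breaks down, since composing the single-variable dilations leaves spurious operators $J_kQ_{k+1}$ between the factors. In the iterative route the crux is therefore the functoriality of the single-variable construction — that an operator commuting with $T_1$ lifts to one commuting with $R_1$ while preserving positivity, contractivity, the Ritt property and the joint $H^\infty$ bound — and in the simultaneous route it is checking that the $d$-directional model actually dilates every monomial and that the assembled $R_k$ commute. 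It is exactly the joint $H^\infty$ hypothesis that makes this work, bypassing the And\^o-type obstruction to dilating three or more commuting contractions simultaneously.
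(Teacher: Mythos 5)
Your reverse implication is sound and coincides with the paper's: positive contractive Ritt operators on $L^p(\Omega)$ have an $H^\infty(B_{\theta_k})$ functional calculus by \cite{LMX}, the lattice/property-$(\alpha)$ automaticity theorem upgrades this to a joint calculus for $(R_1,\ldots,R_d)$, and the dilation identity transfers the polynomial estimate to $(T_1,\ldots,T_d)$, which suffices by Proposition \ref{Rittpolynome}. The forward implication, however, is a plan rather than a proof, and you in effect say so yourself: the ``main obstacle'' of simultaneity that you describe is precisely the point left unresolved. The paper's resolution is Lemma \ref{lem41}. The single-variable construction of \cite{AFLM} dilates $T_k$ not through an abstract $L^p$-space but through $L^p(\Omega;X)$ with the original space $X$ kept as a fiber, and the square-function embedding $J_k$ satisfies the intertwining relation $J_k R=(I_{L^p(\Omega)}\overline{\otimes}R)J_k$ for every operator $R$ commuting with $T_k$ (assumption (3) of the lemma; verified in the proof of Theorem \ref{Th42} using commutativity of the $T_j$). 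This is exactly what removes the ``spurious $J_kQ_{k+1}$'' factors you worry about: one pushes $\widetilde{J_{m-1}}\,T_m^{n_m}=(I^{\otimes m-1}\overline{\otimes}T_m^{n_m})\widetilde{J_{m-1}}$ and then dilates $T_m$ inside the fiber, so the composed maps $J=(I^{\otimes m-1}\overline{\otimes}J_m)\cdots(I\overline{\otimes}J_2)J_1$ and $Q=Q_1(I\overline{\otimes}Q_2)\cdots(I^{\otimes m-1}\overline{\otimes}Q_m)$ yield a genuine joint dilation on $L^p(\Omega^d;X)$ by commuting operators acting in separate tensor legs. Without isolating and proving this fiberwise intertwining property, neither your ``simultaneous'' nor your ``iterative'' route goes through, and the iterative route's ``functoriality'' is exactly what would have to be established.

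There is a second concrete gap: the operators produced by the shift-type model are isometric isomorphisms, not Ritt operators (a surjective isometry typically has spectrum covering the unit circle), so your claim that the $R_k$ acting ``one in each shift direction'' are automatically Ritt is false as stated. The paper obtains positive contractive Ritt dilation operators only via the fractional-power maneuver: replace each $T_k$ by $(T_k)_a=I-(I-T_k)^a$ for a suitable $a>1$ (\cite[Prop.\ 3.2]{AFLM}), dilate the $(T_k)_a$ by commuting isometric isomorphisms $U_k$ through Theorem \ref{Th42}, and then set $R_k=(U_k)_b$ with $b=1/a$; the UMD property of $L^p$ together with \cite[Thms.\ 3.1 and 3.3]{AFLM} gives that each $R_k$ is a contractive Ritt operator with an $H^\infty$ calculus, and positivity of the $R_k$ comes from the positivity of the underlying isometry (Remark \ref{Positive}). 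Finally, to land in a scalar $L^p(\Omega)$ rather than $L^p(\Omega;X)$ one uses that the Bochner space $L^p(\Omega;L^p(\Sigma))$ is again an $L^p$-space (Theorem \ref{Th43} applied with $X=L^p(\Sigma)$). These steps — the combination lemma with its intertwining hypothesis and the isometry-to-Ritt conversion — are the substance of the proof and are missing from your proposal.
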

 
The case $d=1$ was proved in \cite[Theorem 5.2]{AFLM}. 
The extension to $d$-tuples relies on the construction in \cite{AFLM} and
a new approach allowing to combine dilations associated to single operators
to obtain a dilation associated to a $d$-tuple. Section \ref{Dilations} also
includes a variant of Theorem \ref{Th12} for $d$-tuples of commuting Ritt operators
acting on a UMD Banach space with property $(\alpha)$.

Section \ref{Hilbert} is devoted to operators acting on Hilbert space.
It was shown in \cite{LM} that if $H$ is a Hilbert space and 
$T\colon H\to H$ is a Ritt operator, then it admits an
$H^\infty(B_\gamma)$ functional calculus for some $\gamma\in(0,\frac{\pi}{2})$
if and only if it is 
similar to a contraction, that is, there exists an invertible
$S\colon H\to H$ such that $\norm{S^{-1}TS}\leq 1$. Here we show that
if $(T_1,\ldots, T_d)$ is a commuting $d$-tuple of Ritt operators on
$H$, then $(T_1,\ldots, T_d)$ admits an $H^\infty(B_{\gamma_1}\times\cdots\times
B_{\gamma_d})$ joint
functional calculus for some
$\gamma_1,\ldots,\gamma_d \in(0,\frac{\pi}{2})$
if and only if $T_1,\ldots, T_d$ are jointly similar
to contractions, 
that is, there exists a common invertible 
$S\colon H\to H$ such that $\norm{S^{-1}T_jS}\leq 1$ for any $j=1,\ldots,d$.
We also establish the following estimate.

\begin{theorem} \label{Th13}
Let $d \geq 3$ be an integer and let $H$ be a Hilbert space. 
Let $T_1,\ldots,T_d$ be commuting contrations on $H$. Assume that
for every $j$ in $\left\lbrace 1,\ldots,d-2\right\rbrace$, 
$T_j$ is a Ritt operator.
Then there exists a constant $C \geq 1$ such that for any 
polynomial $\phi$ in $d$ variables,
\begin{equation} \label{dvn}
\left\| \phi(T_1,\ldots,T_d) \right\| 
\leq C \left\| \phi \right\|_{\infty, \mathbb{D}^d}.
\end{equation}
\end{theorem}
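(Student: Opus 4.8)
The plan is to reduce \eqref{dvn} to the existence of a \emph{simultaneous} unitary dilation of $(T_1,\dots,T_d)$, up to a fixed similarity, by combining the joint functional calculus of the ``Ritt block'' $(T_1,\dots,T_{d-2})$ with Ando's dilation theorem applied to the remaining pair $(T_{d-1},T_d)$. First I would record what the hypotheses give on $(T_1,\dots,T_{d-2})$. Each $T_j$ with $j\le d-2$ is a contraction, hence trivially similar to a contraction, so by the theorem of \cite{LM} quoted above it admits an $H^\infty(B_{\gamma_j})$ functional calculus for some $\gamma_j\in(0,\tfrac{\pi}{2})$. Since every Hilbert space has property $(\alpha)$, Theorem \ref{Th11} then yields that $(T_1,\dots,T_{d-2})$ admits a joint $H^\infty(B_{\gamma_1'}\times\cdots\times B_{\gamma_{d-2}'})$ functional calculus; in particular $\norm{\psi(T_1,\dots,T_{d-2})}\leq K\norm{\psi}_{\infty,\mathbb{D}^{d-2}}$ for every polynomial $\psi$, and (tracking the argument, or invoking the Hilbert space results of Section \ref{Hilbert}) this estimate holds completely boundedly. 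Consequently $(T_1,\dots,T_{d-2})$ is jointly similar to a commuting $(d-2)$-tuple of contractions possessing a joint unitary dilation, through an invertible $S$ that -- being manufactured from square functions in the operators themselves -- can be taken in the von Neumann algebra generated by $T_1,\dots,T_{d-2}$. As $T_{d-1}$ and $T_d$ commute with each $T_j$, $j\le d-2$, they commute with $S$. Setting $R_j=S^{-1}T_jS$ for $j\le d-2$ and $R_{d-1}=T_{d-1}$, $R_d=T_d$, we obtain a commuting $d$-tuple of contractions with $\phi(T_1,\dots,T_d)=S\,\phi(R_1,\dots,R_d)\,S^{-1}$ for every polynomial $\phi$, so it suffices to establish \eqref{dvn} for $(R_1,\dots,R_d)$, at the expense of the factor $\norm{S}\norm{S^{-1}}$ in the constant.

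Next I would build the dilation. Let $(U_1,\dots,U_{d-2})$ be a minimal joint unitary dilation of $(R_1,\dots,R_{d-2})$, acting on a Hilbert space $K\supseteq H$. Since $R_{d-1}$ and $R_d$ commute with $R_1,\dots,R_{d-2}$, a commutant-lifting argument for this (completely contractive) dilation produces commuting contractions $\widehat R_{d-1},\widehat R_d$ on $K$ that commute with $U_1,\dots,U_{d-2}$ and satisfy
\[
P_H\,U_1^{n_1}\cdots U_{d-2}^{n_{d-2}}\,\widehat R_{d-1}^{\,n_{d-1}}\,\widehat R_d^{\,n_d}\big|_H
= R_1^{n_1}\cdots R_d^{n_d},\qquad (n_1,\dots,n_d)\in\mathbb{N}^d.
\]
Now decompose $K=\int_{\mathbb{T}^{d-2}}^{\oplus}K_\lambda\,d\mu(\lambda)$ according to the joint spectral measure of the commuting unitary family $(U_1,\dots,U_{d-2})$. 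Because $\widehat R_{d-1}$ and $\widehat R_d$ commute with that family, they are decomposable, $\widehat R_{d-1}=\int^{\oplus}A_\lambda\,d\mu(\lambda)$ and $\widehat R_d=\int^{\oplus}B_\lambda\,d\mu(\lambda)$, with $A_\lambda,B_\lambda$ a commuting pair of contractions on $K_\lambda$ for $\mu$-a.e.\ $\lambda$. Applying Ando's theorem on each fibre -- with a measurable selection of the dilation data, which is possible since Ando's construction is given by explicit formulas in $A_\lambda,B_\lambda$ and their defect operators -- I obtain commuting unitaries $W_{d-1}=\int^{\oplus}W_{d-1,\lambda}\,d\mu(\lambda)$, $W_d=\int^{\oplus}W_{d,\lambda}\,d\mu(\lambda)$ on $\widehat K=\int^{\oplus}\widehat K_\lambda\,d\mu(\lambda)\supseteq K$ forming a joint unitary dilation of $(\widehat R_{d-1},\widehat R_d)$; extending each $U_i$ to $\widehat K$ as $\int^{\oplus}\lambda_i\,I_{\widehat K_\lambda}\,d\mu(\lambda)$ leaves it unitary and commuting with $W_{d-1}$ and $W_d$.

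Finally, the tuple $(U_1,\dots,U_{d-2},W_{d-1},W_d)$ consists of commuting unitaries on $\widehat K\supseteq H$, and chasing through the two dilation relations -- pulling the $U_i$'s past the $W$'s (they commute), using that $P_K^{\widehat K}$ commutes with the $U_i$, then compressing to $K$ and to $H$ -- one checks that $P_H\,U_1^{n_1}\cdots U_{d-2}^{n_{d-2}}W_{d-1}^{n_{d-1}}W_d^{n_d}\big|_H=R_1^{n_1}\cdots R_d^{n_d}$ for all $(n_1,\dots,n_d)\in\mathbb{N}^d$. Hence for every polynomial $\phi$ in $d$ variables,
\[
\norm{\phi(R_1,\dots,R_d)}
= \bignorm{P_H\,\phi(U_1,\dots,U_{d-2},W_{d-1},W_d)\big|_H}
\leq \norm{\phi(U_1,\dots,U_{d-2},W_{d-1},W_d)}
= \sup_{\mathbb{T}^d}|\phi|
\leq \norm{\phi}_{\infty,\mathbb{D}^d},
\]
the last equality being the joint spectral theorem for a commuting family of unitaries. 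Combined with the first paragraph, this gives \eqref{dvn} with $C=\norm{S}\norm{S^{-1}}$.

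The main obstacle is the transition carried out in the first two paragraphs: upgrading the joint $H^\infty$ functional calculus of $(T_1,\dots,T_{d-2})$ to a simultaneous unitary dilation that still accommodates $T_{d-1}$ and $T_d$. This splits into (a) arranging that the similarity making $(R_1,\dots,R_{d-2})$ (completely) contractive commutes with $T_{d-1}$ and $T_d$ -- which is why one wants that similarity produced from square functions in the operators themselves, so it lies in the von Neumann algebra they generate -- and (b) the commutant-lifting step placing $\widehat R_{d-1},\widehat R_d$ on $K$ as a \emph{commuting} pair commuting with all of $U_1,\dots,U_{d-2}$. Point (b) is precisely where the restriction to $d-2$ (rather than $d-1$ or $d$) Ritt operators is used: after the ``good block'' has been peeled off via its functional calculus, only \emph{two} arbitrary commuting contractions remain, and the direct-integral construction above reduces their dilation -- fibrewise over the joint spectrum of $(U_1,\dots,U_{d-2})$ -- to Ando's theorem.
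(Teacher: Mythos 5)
Your overall strategy---peel off the Ritt block via its functional calculus and treat the remaining pair by Ando's theorem---is in the spirit of the paper's proof, but the two steps you yourself flag as ``the main obstacle'' are genuine gaps, and as written both fail. First, the similarity step. From the joint $H^\infty$ calculus of $(T_1,\ldots,T_{d-2})$ you only get joint polynomial boundedness; to pass to a joint similarity to a tuple admitting a joint unitary power dilation you would need a joint \emph{completely} bounded estimate together with a multivariable Paulsen/Arveson argument, which you assert rather than prove. Worse, the claim that the similarity $S$ commutes with $T_{d-1}$ and $T_d$ is unjustified: even if $S$ is manufactured from square functions and lies in the von Neumann algebra generated by $T_1,\ldots,T_{d-2}$, that algebra contains the adjoints $T_j^*$, and $T_{d-1},T_d$ commute with the $T_j$ but not, in general, with the $T_j^*$; so ``$T_{d-1},T_d$ commute with each $T_j$, hence with $S$'' is a non sequitur (a positive invertible $S$ of square-function type is genuinely built from adjoints). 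Second, the commutant-lifting step. Lifting $T_{d-1},T_d$ to a \emph{commuting} pair of contractions on $K$ that moreover commutes with all of $U_1,\ldots,U_{d-2}$ is a multivariable commutant lifting statement which is false in general; if one could always lift a contraction commuting with a commuting pair past Ando's dilation, one would obtain von Neumann's inequality with constant $1$ for three commuting contractions, contradicting known counterexamples. Nothing in the abstract joint dilation you invoke supplies the extra structure such a lift would require. (The fibrewise Ando argument over the joint spectral measure also needs separability of the space and a measurable-selection justification, but that is secondary.)

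The paper avoids both problems by never passing through an abstract joint unitary dilation of the Ritt block. Instead, the proof of Theorem \ref{Th42} (based on \cite{AFLM}) gives, for each $k\leq d-2$, a \emph{loose} dilation of tensor form $T_k^{n}=Q_k(V_k\overline{\otimes}I_H)^{n}J_k$ on $L^2(\Omega;H)$ with $V_k$ unitary, together with the intertwining relation $J_kR=(I_{L^2(\Omega)}\overline{\otimes}R)J_k$ for \emph{every} $R$ commuting with $T_k$; this intertwining is precisely what lets $T_{d-1},T_d$ pass through the dilation of the Ritt block, replacing both your ``$S$ commutes with $T_{d-1},T_d$'' and your commutant lifting. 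Ando's theorem then dilates the remaining pair, and Lemma \ref{lem41} (applied with $m=d-2$ and $Y$ the Ando dilation space) stacks all the dilations, producing commuting unitaries $U_1,\ldots,U_d$ on a Hilbert space and bounded $J,Q$ with $T_1^{n_1}\cdots T_d^{n_d}=QU_1^{n_1}\cdots U_d^{n_d}J$, whence (\ref{dvn}) with $C=\Vert Q\Vert\,\Vert J\Vert$. Note also that this yields only a loose dilation and a constant $C$ possibly larger than $1$, which is all the statement requires; your proposal aims at the stronger conclusion of a genuine power dilation after a single similarity, which is exactly where it breaks down.
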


Note that without any Ritt type assumptions, the question whether 
any commuting $d$-tuple $(T_1,\ldots,T_d)$ of contractions on Hilbert space satisfies
an estimate (\ref{dvn}) is an open problem. See e.g. \cite[Chapter 1]{P}
for more about this.

In \cite{FMI}, E. Franks and A. McIntosh established a fundamental decomposition
of bounded holomorphic functions defined on (products of) sectors(s),
which is now known as the ``Franks-McIntosh decomposition".
Many results in Sections 3-5 heavily rely of an analogue of
this decomposition for bounded holomorphic functions defined on 
products of Stolz domains. Such a decomposition can be regarded 
as a consequence of 
\cite[Section 4]{FMI}. However the proofs in this section
of \cite{FMI} are very sketchy and the case of Stolz domains
is much simpler than the general case considered in \cite{FMI}.
For the sake of completeness we provide an ad-hoc proof in 
Section \ref{Appendix}.

In parallel to commuting families of Ritt operators,
we treat commuting families of sectorial operators. In Section \ref{FC} we give a general
definition of $H^\infty$ joint functional calculus for a $d$-tuple of
commuting sectorial operators which refines \cite{Al}. In Section \ref{Automatic}, 
we give a sectorial analogue of Theorem \ref{Th11}. In the case when $d=2$, this
result goes back to \cite{LLLM}. Section \ref{Dilations}  includes a characterisation
of $H^\infty$ joint functional calculus 
in terms of dilations, either on $L^p$-spaces 
or on UMD Banach spaces with property $(\alpha)$.

\bigskip
We end this section by fixing some notations. Throughout 
we let $B(X)$ denote the Banach algebra of 
all bounded operators on some Banach space $X$. We let $I_X$ 
denote the identity operator on $X$. 
For any (possibly unbounded) operator $A$ on $X$, we let 
$\sigma(A)$ denote the spectrum of $A$ and for every $\lambda$ in $\mathbb{C} 
\setminus \sigma(A)$, we let  $R(\lambda,A)=(\lambda I_X - A)^{-1}$ denote the
resolvent operator. Next, we let $\text{Ker}(A)$ and 
$\text{Ran}(A)$ denote the kernel and the range of $A$, respectively.

For any $a \in \mathbb{C}$ and $r>0$, $D(a,r)$ will denote the open disc 
centered at $a$ with radius $r$. Then we let $\mathbb{D} = D(0,1)$ denote
the unit disc of $\mathbb{C}$ and we set $\mathbb{T} = \overline{\mathbb{D}} 
\setminus \mathbb{D}$.

If $\mathcal{O}$ is an open non empty subset of $\mathbb{C}^d$, for some integer 
$d \geq 1$, we will denote by $H^\infty(\mathcal{O})$ the algebra of all bounded 
holomorphic functions $f \colon \mathcal{O} \to \mathbb{C}$, which is a 
Banach algebra for the norm 
\begin{equation*}
\left\| f \right\|_{\infty,\mathcal{O}} = \text{sup} \left\lbrace \left| 
f(z_1,\ldots,z_d) \right| : (z_1,\ldots,z_d) \in \mathcal{O} \right\rbrace
\end{equation*}  

If $X$ is a Banach space, $(\Omega,\mu)$ is a measure space and $p \in(1,\infty)$, 
we denote by $L_p(\Omega;X)$ the Bochner space of all measurable functions 
$f \colon \Omega \to X$ such that $\int_\Omega \left\| 
f(\omega) \right\|^p d\mu(\omega) < \infty$, and we let 
$L_p(\Omega) = L_p(\Omega;\mathbb{C}) $. We refer the reader e.g. to \cite{Hyt1} 
for more details.

The set of nonnegative integers will be denoted by $\mathbb{N} = 
\left\lbrace 0,1,2,... \right\rbrace$. We set $\Ndb^*=\Ndb\setminus\{0\}$.

In certain proofs, we use the notation $\lesssim$ to indicate an 
inequality valid up to a constant which does not depend on the particular 
elements to which it applies.

\section{Functional calculus and its basic properties}\label{FC}
We first introduce $H^\infty$-functional calculus for a commuting family of sectorial operators.
The construction and properties for a single operator go back to \cite{MI, CDMY} (see also \cite{Haa, KuW}).
The following construction is an extension (or a variant) of those in \cite{Al}
or \cite{LLLM}.

Throughout we let $X$ be an arbitrary Banach space. 
For any $\theta \in (0,\pi)$, we let
\begin{equation*}
\Sigma_\theta = \left\lbrace z \in \mathbb{C}^* : \left|\text{Arg}(z) \right| < \theta \right\rbrace.
\end{equation*} 
We say that a closed linear operator $A \colon D(A) \to X$ with dense domain $D(A) \subset X$ is 
sectorial of type $\omega \in (0,\pi)$ if $\sigma(A) \subset \overline{\Sigma_{\omega}}$ 
and for any $\theta$ in $(\omega,\pi)$, there exists a constant $C_\theta$ such that
\begin{equation} \label{sectoriel}
\left\| zR(z,A) \right\| \leq C_\theta, \qquad z \in \mathbb{C} \setminus \overline{\Sigma_{\theta}}.
\end{equation}
It is well known that $A$ is a sectorial operator of type $\omega < \frac{\pi}{2}$ 
if and only if it is the negative generator of a  bounded analytic semigroup.

Let $d \geq 1$ be an integer and let $\theta_1,\ldots,\theta_d$ be elements of $(0,\pi)$. For any 
subset $J\subset \left\lbrace 1,\ldots,d \right\rbrace $, we denote by $H^\infty_0 \left(\prod_{i \in J} 
\Sigma_{\theta_i} \right)$ the subalgebra of $H^\infty \left( \Sigma_{\theta_1} \times \cdots \times 
\Sigma_{\theta_d} \right) $ of all holomorphic bounded functions depending only on the variables 
$(z_i)_{i\in J}$ and such that there exist positive constants $c$ and $(s_i)_{i \in J}$ verifying
\begin{equation} \label{Hinfini0}
\left| f(z_1,\ldots,z_d) \right| \leq c\,\prod_{i \in J} \dfrac{|z_i|^{s_i}}{1+|z_i|^{2s_i}}, 
\qquad (z_i)_{i\in J} \in \prod_{i \in J} \Sigma_{\theta_i}.
\end{equation}
When $J=\emptyset$, $H^\infty_0 \left(\prod_{i \in \emptyset} \Sigma_{\theta_i} \right)$ 
is the space of constant functions on $\Sigma_{\theta_1} \times \cdots \times \Sigma_{\theta_d}$.

Let $(A_1,\ldots,A_d)$ be a family of commuting sectorial operators on $X$. Here the commuting property means 
that for any $k,l$ in $\left\lbrace 1,\ldots,d \right\rbrace$, the resolvent operators $R(z_k,A_k)$ and 
$R(z_l,A_l)$ commute for any $z_k$ in $ \mathbb{C} \setminus \sigma(A_k)$ and 
$z_l$ in $ \mathbb{C} \setminus \sigma(A_l)$. Assume that for every $k=1,\ldots,d$,
$A_k$ is of type $\omega_k \in (0,\theta_k)$ and let $\nu_k \in (\omega_k,\theta_k)$.

For any $f$ in $H_0^{\infty}(\prod_{i \in J} \Sigma_{\theta_i})$ with 
$J \subset \left\lbrace 1,\ldots,d \right\rbrace$, $J \neq \emptyset$, we let
\begin{equation} \label{fAi}
f(A_1,\ldots,A_d) = \left( \dfrac{1}{2\pi i } \right)^{|J|} 
\int_{\prod_{i \in J} \partial \Sigma_{\nu_i}} f(z_1,\ldots,z_d) \prod_{i \in J} R(z_i,A_i) \prod_{i \in J} dz_i,
\end{equation}
where the boundaries $\partial \Sigma_{\nu_i}$ are oriented counterclockwise for all $i$ in $J$. 
By the commuting assumption on $(A_1,\ldots,A_d)$, the product operator 
$\prod_{i \in J} R(z_i,A_i)$ is well-defined.
Further the conditions (\ref{sectoriel}) and (\ref{Hinfini0}) ensure that this integral is absolutely 
convergent and defines an element of $B(X)$. By Cauchy's Theorem, this definition does not depend on 
the choice of the $\nu_i$'s. Moreover the linear mapping $f \mapsto f(A_1,\ldots,A_d)$ is an algebra homomorphism 
from $H_0^{\infty}(\prod_{i \in J} \Sigma_{\theta_i})$ into $B(X)$. 
The proofs of these facts are similar to the ones for 
a single operator and are omitted.

If $f \equiv a$ is a constant function on $\Sigma_{\theta_1}\times\cdots\times \Sigma_{\theta_d}$
(the case when $J=\emptyset$), then we set $f(A_1,\ldots,A_d) = a I_X$.

Next we let 
$$
H_{0,1}^\infty(\Sigma_{\theta_1} \times \cdots \times \Sigma_{\theta_d})\subset 
H^\infty(\Sigma_{\theta_1} \times \cdots \times \Sigma_{\theta_d})
$$
be the sum of all
the $H^\infty_0 \left(\prod_{i \in J} 
\Sigma_{\theta_i} \right)$, with $J\subset\{1,\ldots,d\}$. 
We claim that this sum is a direct one, so that we actually have
\begin{equation}\label{Hinfini01}
H_{0,1}^\infty(\Sigma_{\theta_1} \times \cdots \times \Sigma_{\theta_d}) = 
\bigoplus_{J \subset \left\lbrace 1,\ldots,d \right\rbrace} H^\infty_0 
\left( \prod_{i \in J} \Sigma_{\theta_i} \right).
\end{equation}
Let us prove this fact. For any $i$ in $\left\lbrace 1,\ldots,d \right\rbrace$, let $p_i$ be the operator 
defined on the space $H_{0,1}^\infty(\Sigma_{\theta_1} \times \cdots \times \Sigma_{\theta_d}) $ by
\begin{equation}\label{pi}
\bigl[p_i(f)\bigr](z_1,\ldots,z_d) = f(z_1,\ldots,z_{i-1},0,z_{i+1},\ldots,z_d),
\qquad f \in H_{0,1}^\infty(\Sigma_{\theta_1} \times \cdots \times \Sigma_{\theta_d}).
\end{equation}
In this definition, $f(z_1,\ldots,z_{i-1},0,z_{i+1},\ldots,z_d)$ stands for 
the limit, when $z\in\Sigma_{\theta_i}$ and $z\to 0$, of
$f(z_1,\ldots,z_{i-1},z,z_{i+1},\ldots,z_d)$,
provided that this limit
exists. This is the case when $f$ belongs to 
$H_{0,1}^\infty(\Sigma_{\theta_1} \times \cdots \times \Sigma_{\theta_d})$.
Note that the operators $p_i$ commute.

For any $J\subset \{1,\ldots,d\}$, we can therefore define 
\begin{equation}\label{PJ}
P_{J} = \prod_{i \in J} (I-p_i) \prod_{i \in J^c} p_i.
\end{equation} 
It is easy to check that $P_{J}(f)=f$ if $f$ belongs
to $H^\infty_0 \left(\prod_{i \in J} 
\Sigma_{\theta_i} \right)$ and $P_{J}(f)=0$ if $f$ belongs
to $H^\infty_0 \left(\prod_{i \in J'} 
\Sigma_{\theta_i} \right)$ for some $J'\not=J$. The direct sum property 
(\ref{Hinfini01}) follows at once.

Moreover, 
$$
P_J\colon H_{0,1}^\infty(\Sigma_{\theta_1} \times \cdots \times \Sigma_{\theta_d})\longrightarrow 
H_{0,1}^\infty(\Sigma_{\theta_1} \times \cdots \times \Sigma_{\theta_d})
$$
is the projection 
onto $H^\infty_0 \left(\prod_{i \in J} 
\Sigma_{\theta_i} \right)$ with kernel equal to the direct sum of the 
$H^\infty_0 \left(\prod_{i \in J'} 
\Sigma_{\theta_i} \right)$, with $J'\not=J$.

For any function $f = \sum_{J \subset \left\lbrace 1,\ldots,d \right\rbrace} f_J$ in 
$H_{0,1}^\infty(\Sigma_{\theta_1} \times \cdots \times \Sigma_{\theta_d})$, 
where each $f_J$ belongs to $H^\infty_0 \left( \prod_{i \in J} \Sigma_{\theta_i} \right) $, we
naturally set
\begin{equation}
f(A_1,\ldots,A_d) = \sum_{J \subset \left\lbrace 1,\ldots,d \right\rbrace} f_J(A_1,\ldots,A_d),
\end{equation}
the operator $f_J(A_1,\ldots,A_d)$ being defined by (\ref{fAi}). In the sequel, $f\mapsto
f(A_1,\ldots,A_d)$ is called the functional calculus mapping associated with $(A_1,\ldots,A_d)$.

We note that if $f_{J}$ is in $H^\infty_0 \left( \prod_{i \in J} \Sigma_{\theta_i} \right)$ 
and $f_{J'}$ is in $H^\infty_0 \left( \prod_{i \in J'} \Sigma_{\theta_i} \right) $, 
then $f_J f_{J'}$ is in $H^\infty_0 \left( \prod_{i \in J \cup J'} \Sigma_{\theta_i} \right)$. 
Thus $H_{0,1}^\infty(\Sigma_{\theta_1} \times \cdots \times \Sigma_{\theta_d})$ is a 
subalgebra of $H^\infty(\Sigma_{\theta_1} \times \cdots \times \Sigma_{\theta_d})$.

\begin{lemma}\label{Homo}
The functional calculus mapping $f\mapsto
f(A_1,\ldots,A_d)$ is an algebra homomorphism from 
$H_{0,1}^\infty(\Sigma_{\theta_1} \times \cdots 
\times \Sigma_{\theta_d}) $ into $B(X)$.
\end{lemma}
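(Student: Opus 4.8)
The plan is the following. Linearity of $f\mapsto f(A_1,\ldots,A_d)$ is immediate from the definition~(\ref{fAi}), the direct sum decomposition~(\ref{Hinfini01}) and the already recorded linearity of the map on each summand $H^\infty_0(\prod_{i\in J}\Sigma_{\theta_i})$. For multiplicativity, write $f=\sum_{J}f_J$ and $g=\sum_{J'}g_{J'}$, where $f_J\in H^\infty_0(\prod_{i\in J}\Sigma_{\theta_i})$ and $g_{J'}\in H^\infty_0(\prod_{i\in J'}\Sigma_{\theta_i})$. Since $f_Jg_{J'}$ lies in $H^\infty_0(\prod_{i\in J\cup J'}\Sigma_{\theta_i})$, expanding $fg=\sum_{J,J'}f_Jg_{J'}$ and using the linearity just established reduces everything to proving
\[
(f_Jg_{J'})(A_1,\ldots,A_d)=f_J(A_1,\ldots,A_d)\,g_{J'}(A_1,\ldots,A_d)
\]
for every pair $J,J'\subset\{1,\ldots,d\}$.

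The cases $J=\emptyset$ or $J'=\emptyset$ are trivial, a constant function giving a scalar multiple of $I_X$. So assume $J,J'\neq\emptyset$ and set $K=J\cap J'$. For each index $k$ I would fix parameters $\omega_k<\nu_k'<\nu_k<\theta_k$, represent $f_J(A_1,\ldots,A_d)$ by the integral~(\ref{fAi}) over $\prod_{i\in J}\partial\Sigma_{\nu_i}$ and $g_{J'}(A_1,\ldots,A_d)$ by the integral over $\prod_{i\in J'}\partial\Sigma_{\nu_i'}$. The decay estimate~(\ref{Hinfini0}) and the sectoriality bounds~(\ref{sectoriel}) make all the integrands absolutely integrable, so by Fubini's theorem and the commutation of all the resolvents involved, $f_J(A_1,\ldots,A_d)g_{J'}(A_1,\ldots,A_d)$ equals the iterated integral of $f_J(z)g_{J'}(w)\prod_{i\in J}R(z_i,A_i)\prod_{i\in J'}R(w_i,A_i)$. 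For an index $i\in K$, the factor $R(z_i,A_i)R(w_i,A_i)$ is rewritten by the resolvent identity as $(z_i-w_i)^{-1}\bigl(R(w_i,A_i)-R(z_i,A_i)\bigr)$, and the two variables $z_i,w_i$ are then integrated out exactly as in the one-variable multiplicativity argument: in the term carrying $R(w_i,A_i)$, the inner integral in $z_i$ over $\partial\Sigma_{\nu_i}$ equals $f_J$ evaluated at $w_i$ by Cauchy's integral formula (this uses that $\partial\Sigma_{\nu_i'}$ lies inside $\partial\Sigma_{\nu_i}$), while the term carrying $R(z_i,A_i)$ vanishes since the inner integral in $w_i$ over $\partial\Sigma_{\nu_i'}$ is zero by Cauchy's theorem; the net effect is to delete one of the two variables attached to $i$ and to replace $f_J,g_{J'}$ by the single function $f_Jg_{J'}$ in that slot. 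For an index $i\in J'\setminus K$ the factor $R(w_i,A_i)$ simply commutes through and $w_i$ is relabelled $z_i$. Performing this for every shared index leaves precisely the integral~(\ref{fAi}) defining $(f_Jg_{J'})(A_1,\ldots,A_d)$, the freedom in the choice of the Stolz (here, sector) boundaries being guaranteed by Cauchy's theorem.

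I expect the only genuine obstacle to be bookkeeping: tracking which contour sits inside which for each shared index, and rigorously justifying the interchanges of integration and the contour deformations for the \emph{unbounded} contours $\partial\Sigma_{\nu_i}$, $\partial\Sigma_{\nu_i'}$. Exactly as for a single operator, both are controlled by the quantitative decay $|f_J(z)|\lesssim\prod_{i\in J}|z_i|^{s_i}\bigl(1+|z_i|^{2s_i}\bigr)^{-1}$, which kills the contributions near $0$ and near $\infty$ and lets each $\partial\Sigma_{\nu_i}$ be treated as a closed contour, so that Cauchy's integral formula and Cauchy's theorem apply coordinate by coordinate. No new phenomenon appears beyond the single-variable case, which is precisely why that case was quoted above without proof.
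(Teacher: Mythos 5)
Your proposal is correct, and its skeleton coincides with the paper's: reduce by linearity and the direct sum (\ref{Hinfini01}) to proving the identity (\ref{morph}) for a single pair $f_J$, $f_{J'}$, dispose of the constant (empty-set) components, and observe that the whole issue is the shared indices $J\cap J'$, after which the contours recombine into the integral over $\Gamma_{J\cup J'}$ defining $(f_Jf_{J'})(A_1,\ldots,A_d)$. Where you diverge is in how that key step is handled. The paper fixes the non-shared variables, uses Fubini to peel off the integrals over the common variables $J_0=J\cap J'$, and then simply invokes the multiplicativity of the functional calculus on $H^\infty_0\bigl(\prod_{i\in J_0}\Sigma_{\theta_i}\bigr)$ -- a fact it had already recorded just after (\ref{fAi}), with proof omitted as being analogous to the single-operator case -- before reassembling everything. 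You instead take only the one-variable statement as known and redo the classical mechanism by hand for each shared index: nested contours $\partial\Sigma_{\nu_i'}$ inside $\partial\Sigma_{\nu_i}$, the resolvent identity, Cauchy's integral formula for the term carrying $R(w_i,A_i)$ and Cauchy's theorem for the term carrying $R(z_i,A_i)$, with absolute convergence, the interchanges of integrals and the closing of the unbounded contours all controlled by (\ref{sectoriel}) and the decay (\ref{Hinfini0}). Both routes are sound: yours is more self-contained, since it effectively reproves inside the larger integral exactly the same-variable-set multiplicativity that the paper quotes, at the cost of heavier bookkeeping (one must process the shared coordinates one at a time and keep track of which function sits on which contour); the paper's Fubini reduction is shorter precisely because it black-boxes the step you carry out explicitly. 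I see no gap in your plan beyond the bookkeeping you yourself flag.
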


\begin{proof}
The linearity being obvious, it suffices to check that for any subsets 
$J,J'$ of $\left\lbrace 1,\ldots,d \right\rbrace $, for any $f_J$ in $H^\infty_0 
\left( \prod_{i \in J} \Sigma_{\theta_i} \right)$ and 
$f_{J'}$ in $H^\infty_0 \left( \prod_{i \in J'} \Sigma_{\theta_i} \right) $, we have
\begin{equation}\label{morph}
f_J(A_1,\ldots,A_d) f_{J'}(A_1,\ldots,A_d) = (f_Jf_{J'})(A_1,\ldots,A_d).
\end{equation}
We let $J_0 = J \cap J'$ and we set $J_1=J\setminus J_0$ and $J'_1=J'\setminus J_0$. 
For convenience we set, for any subset $K$ of $\left\lbrace 1,\ldots,d \right\rbrace $,
\begin{equation*}
z_K=(z_i)_{i\in K},
\quad dz_K = \prod_{i \in K} dz_i,
\quad 
R_K(z_K) = \prod_{i \in K} R(z_i,A_i) 
\quad\hbox{and}\quad
\Gamma_K = \prod_{i\in K} \partial \Sigma_{\nu_i}.
\end{equation*}
Using Fubini's theorem, we have
\begin{align*}
f_J(A_1,\ldots,A_d) & f_{J'}(A_1,\ldots,A_d) \\
= & \left( \dfrac{1}{2\pi i} \right)^{|J| + |J'|} 
\left( \int_{\Gamma_J} f_{J}(z_1,\ldots,z_d) R_J(z_J)\, dz_J \right) 
\left( \int_{\Gamma_{J'}} f_{J'}(z_1,\ldots,z_d) R_{J'}(z_{J'})\, dz_{J'} \right) \\ 
= &  \left( \dfrac{1}{2\pi i} \right)^{|J| + |J'|} \int_{\Gamma_{J_1}} 
\left( \int_{\Gamma_{J_0}} f_{J}(z_1,\ldots,z_d) R_{J_0} (z_{J_0})\, dz_{J_0}\right) 
R_{J_1}(z_{J_1})\, dz_{J_1}  \\
& \qquad \times \int_{\Gamma_{J_1'}} \left( \int_{\Gamma_{J_0}} 
f_{J'}(z_1,\ldots,z_d) R_{J_0}(z_{J_0})\, dz_{J_0} \right) 
R_{J_1'}(z_{J_1'})\, dz_{J_1'}\\
= & \left( \dfrac{1}{2\pi i} \right)^{|J| + |J'|}  
\int_{\Gamma_{J_1} \times \Gamma_{J_1'}} \left[ 
\left( \int_{\Gamma_{J_0}} f_{J}(z_1,\ldots,z_d) R_{J_0}(z_{J_0})\, dz_{J_0} \right) \right.\\
& \qquad \times \left. \left( \int_{\Gamma_{J_0}} f_{J'}(z_1,\ldots,z_d) 
R_{J_0}(z_{J_0})\, dz_{J_0} \right) \right] 
R_{J_1}(z_{J_1})\, R_{J_1'}(z_{J_1'})\, dz_{J_1}dz_{J_1'}\,.
\end{align*}
For fixed variables $z_i$, for $i \notin J_0$, the two functions 
$$
(z_i)_{i \in J_0} 
\mapsto f_J(z_1,\ldots,z_d)
\qquad\hbox{and}\qquad
(z_i)_{i \in J_0} \mapsto f_{J'}(z_1,\ldots,z_d)
$$
both belong to 
$H^\infty_0 \left( \prod_{i \in J_0} \Sigma_{\theta_i} \right)$. We noticed before that 
the functional calculus mapping is a homomorphism from
$H^\infty_0\left(\prod_{i \in J_0} \Sigma_{\theta_i} \right)$ into $B(X)$. 
Consequently,
\begin{align*}
\left(\dfrac{1}{2\pi i} \right)^{2|J_0|}  
\left( \int_{\Gamma_{J_0}} f_{J} (z_1,\ldots,z_d) R_{J_0}(z_{J_0})\, dz_{J_0} \right) & 
\left( \int_{\Gamma_{J_0}} f_{J'} (z_1,\ldots,z_d) R_{J_0}(z_{J_0})\, dz_{J_0} \right) \\
=  
\left(\dfrac{1}{2\pi i} \right)^{|J_0|} &
\int_{\Gamma_{J_0}}  f_{J}f_{J'}(z_1,\ldots,z_d) R_{J_0}(z_{J_0})\, dz_{J_0}\,.
\end{align*}
Hence the above 
computation leads to
\begin{align*}
f_J&(A_1,\ldots,A_d)  f_{J'}(A_1,\ldots,A_d) \\ 
& = \left( \dfrac{1}{2\pi i} \right)^{|J| + |J'|- |J_0|} 
\int_{\Gamma_{J_1} \times \Gamma_{J_1'}}\int_{\Gamma_{J_0}} 
 f_{J}f_{J'}(z_1,\ldots,z_d) R_{J_0}(z_{J_0})\, dz_{J_0}\ 
R_{J_1}(z_{J_1}) R_{J_1'} (z_{J'_1}) dz_{J_1}dz_{J_1'} \\
& =  \left( \dfrac{1}{2\pi i} \right)^{|J| + |J'| - |J_0|}
\int_{\Gamma_{J \cup J'}}f_{J}f_{J'}(z_1,\ldots,z_d) R_{J \cup J'}(z_{J \cup J'}) dz_{J \cup J'} \\
& = (f_J f_{J'})(A_1,\ldots,A_d),
\end{align*}
since $J \cup J'$ is the disjoint union of $J_0,J_1$ and $J_1'$. This proves (\ref{morph}).
\end{proof}

\begin{definition}
We say that $(A_1,\ldots,A_d)$ admits an $H^\infty(\Sigma_{\theta_1} \times 
\cdots \times \Sigma_{\theta_d})$ joint functional calculus if the functional calculus mapping associated with
$(A_1,\ldots,A_d)$ is bounded, that is, there exists a constant $K >0$ such that 
for every $f$ in $H_{0,1}^{\infty}(\Sigma_{\theta_1} \times \cdots\times \Sigma_{\theta_d})$,
\begin{equation*}
\left\| f(A_1,\ldots,A_d) \right\| \leq K \left\| f \right\|_{\infty, 
\Sigma_{\theta_1} \times \cdots \times \Sigma_{\theta_d}}.
\end{equation*} 
\end{definition}

Each $p_i$ from (\ref{pi}) is a contraction, hence each $P_J$ from (\ref{PJ})
is a bounded operator on $H_{0,1}^{\infty}(\Sigma_{\theta_1} 
\times \cdots \times \Sigma_{\theta_d})$. This implies that 
$(A_1,\ldots,A_d)$ admits an $H^\infty(\Sigma_{\theta_1} \times 
\cdots \times \Sigma_{\theta_d})$ joint functional calculus if and only
if $f\mapsto f(A_1,\ldots, A_d)$ is bounded on 
$H^\infty_0 \left( \prod_{i \in J} \Sigma_{\theta_i} \right)$ for any $J\subset\{1,\ldots,d\}$.
Consequently if $(A_1,\ldots,A_d)$ admits an $H^\infty(\Sigma_{\theta_1} \times 
\cdots \times \Sigma_{\theta_d})$ joint functional calculus, then 
every subfamily $(A_i)_{i \in J}$, where $J \subset \left\lbrace 1,\ldots,d \right\rbrace$,
also admits an $H^\infty( \prod_{i \in J} \Sigma_{\theta_i})$ joint functional calculus. 
In particular, for every $k=1,\ldots,d$, $A_k$ admits an $H^\infty(\Sigma_{\theta_k})$ functional calculus 
in the usual sense (see \cite[Chapter 5]{Haa}).

We now turn to Ritt operators. Recall that a bounded 
operator $T \colon X \to X$ is called a Ritt operator if there exists a constant $C>0$ such that
\begin{equation*}
\left\| T^n \right\| \leq C \qquad\hbox{and}\qquad 
\left\| n(T^n-T^{n-1}) \right\| \leq C, \qquad n \geq 1.
\end{equation*}
Ritt operators have a spectral characterisation. 
Namely $T$ is a Ritt operator if and only if $\sigma(T) \subset \overline{\mathbb{D}}$ 
and there exists a constant $K>0$ such that
\begin{equation*}
\left\| (\lambda-1)R(\lambda,T) \right\| \leq K, \qquad \lambda \in \mathbb{C},\ |\lambda| > 1.
\end{equation*}
There is a simple link between sectorial operators and Ritt operators. Indeed
if we let $A = I_X-T$, then $T$ is a Ritt operator if and only 
if $ \sigma(T) \subset \mathbb{D} \cup \left\lbrace 1 \right\rbrace$ and 
$A$ is a sectorial operator of type $\omega < \frac{\pi}{2}$. Equivalently, 
$T$ is a Ritt operator if and only if $ \sigma(T) \subset \mathbb{D} 
\cup \left\lbrace 1 \right\rbrace$ and $(e^{-t(I_X-T)})_{t \geq 0}$ is a bounded analytic semigroup.

For any  $\alpha$ in $(0,\frac{\pi}{2})$, let $B_\alpha$ denote the \textit{Stolz domain} of angle $\alpha$, 
defined as the interior of the convex hull of $1$ and the disc $ D(0,\sin(\alpha))$.

\begin{center}
\begin{tikzpicture}
[scale = 2.5];
\draw (-1.2,0)--(1.2,0);
\draw (0,-1.2)--(0,1.2);
\draw [dotted] circle(1);

\draw [fill=gray!20,opacity=0.5] (1,0)--(0.25,0.44)--(0.25,0.44) arc (60:180:0.5);
\draw [fill=gray!20,opacity=0.5] (1,0)--(0.25,-0.44)--(0.25,-0.44) arc (-60:-180:0.5);

\draw [right] (0,0.3) node{$B_\alpha$};

\draw [right] (0.55,0.55) node{$\mathbb{T}$};

\draw [right] (1,0.1) node{$1$};

\end{tikzpicture}
\end{center}

It turns out that if $T$ is a Ritt operator, then $\sigma(T) \subset \overline{B_\alpha}$ for some
$\alpha$ in $(0,\frac{\pi}{2})$.
More precisely (see \cite[Lemma 2.1]{LM}), one can find $\alpha \in (0,\frac{\pi}{2})$ 
such that $\sigma(T) \subset \overline{B_\alpha} $ and for any $\beta \in (\alpha,\frac{\pi}{2})$, 
there exists a constant $K_\beta>0$ such that
\begin{equation} \label{majRittbeta}
\left\| (\lambda-1)R(\lambda,T) \right\| \leq K_{\beta}, 
\qquad \lambda \in \mathbb{C} \setminus \overline{B_\beta}.
\end{equation}
If this property holds, then we say that $T$ is a Ritt operator of type $\alpha$. 
We refer to \cite{Ly,NZ,Nev} for the above facts and also to \cite{LM}
and the references therein for complements on the class of Ritt operators.

$H^\infty$-functional calculus for Ritt operators was formally introduced in \cite{LM}. We now extend
this definition to commuting families. We follow the same pattern as for families of sectorial operators.

Let $d \geq 1$ be an integer and 
let $\gamma_1,\ldots,\gamma_d$ be elements of $(0,\frac{\pi}{2})$. For any 
subset $J$ of $\left\lbrace 1,\ldots,d \right\rbrace $, we denote by $H^\infty_0 
\left( \prod_{i \in J} B_{\gamma_i} \right) $ the subalgebra of $H^\infty 
\left( B_{\gamma_1} \times \cdots \times B_{\gamma_d} \right) $ of all holomorphic bounded 
functions $f$ depending only on variables 
$(\lambda_i)_{i\in J}$ and such that there exist positive constants $c$ and $(s_i)_{i \in J}$ verifying
\begin{equation} \label{Hinfini0Ritt}
\left| f(\lambda_1,\ldots,\lambda_d) \right| \leq c ~ \prod_{i \in J} 
|1-\lambda_i|^{s_i}, \qquad (\lambda_i)_{i\in J} \in \prod_{i \in J} B_{\gamma_i}.
\end{equation}
When $J=\emptyset$, $H^\infty_0 \left( \prod_{i \in \emptyset} B_{\gamma_i} \right) $ is the space of constant 
functions on $B_{\gamma_1} \times \cdots \times B_{\gamma_d}$.

Let $(T_1,\ldots,T_d)$ be a $d$-tuple of commuting Ritt operators. Assume that for any $k=1,\ldots,d$,
$T_k$ is of type $\alpha_k \in (0,\gamma_k)$ and let $\beta_k \in (\alpha_k,\gamma_k)$.

For any $f$ in $H_0^{\infty}(\prod_{i \in J} B_{\gamma_i})$ with 
$J \subset \left\lbrace 1,\ldots,d \right\rbrace$, $J \neq \emptyset$, we let
\begin{equation} \label{fTi}
f(T_1,\ldots,T_d) = \left( \dfrac{1}{2\pi i} \right)^{|J|} \int_{\prod_{i \in J} \partial B_{\beta_i}} f(\lambda_1,\ldots,\lambda_d) \prod_{i \in J} R(\lambda_i,T_i) \prod_{i \in J} d \lambda_i,
\end{equation}
where the $\partial B_{\beta_i}$ are oriented counterclockwise for all $i \in J$. 
This integral is absolutely convergent,
hence defines an element of $B(X)$, its definition does not depend on the $\beta_i$ and 
the linear mapping $f \mapsto f(T_1,\ldots,T_d)$ is an algebra homomorphism 
from $H_0^{\infty}(\prod_{i \in J} B_{\gamma_i})$ into $B(X)$. If $f \equiv a$ is a constant function, 
then we let $f(T_1,\ldots,T_d) = aI_X$.

Next we define
$$
H_{0,1}^\infty(B_{\gamma_1} \times \cdots \times B_{\gamma_d}) = 
\bigoplus_{J \subset \left\lbrace 1,\ldots,d \right\rbrace} 
H^\infty_0 \left( \prod_{i \in J} B_{\gamma_i} \right).
$$
As in the sectorial case, the above sum is indeed a direct one. 
More precisely, set  
\begin{equation*}
\bigl[q_i(f)\bigr](\lambda_1,\ldots,\lambda_d) = 
f(\lambda_1,\ldots,\lambda_{i-1},1,\lambda_{i+1},\ldots,\lambda_d), 
\qquad f \in H_{0,1}^\infty(B_{\gamma_1} \times \cdots \times B_{\gamma_d}),
\end{equation*}
for $i=1,\ldots,d$, and 
\begin{equation} \label{QJ}
Q_J = \prod_{i \in J} (I - q_j)\prod_{i \in J^c} q_i,
\end{equation}
for $J \subset \left\lbrace 1,\ldots,d \right\rbrace$.
These mappings are well-defined and 
$$
Q_J\colon H_{0,1}^\infty(B_{\gamma_1} \times \cdots \times B_{\gamma_d})\longrightarrow
H_{0,1}^\infty(B_{\gamma_1} \times \cdots \times B_{\gamma_d})
$$ 
is the projection onto $H^\infty_0 \left( \prod_{i \in J} B_{\gamma_i} \right)$ 
with kernel equal to the direct sum of the spaces 
$H^\infty_0 \left( \prod_{i \in J'} B_{\gamma_i} \right)$, with $J'\not=J$.

For any function $f = \sum_{J \subset \left\lbrace 1,\ldots,d \right\rbrace} f_J$
in $H_{0,1}^\infty(B_{\gamma_1} \times \cdots \times B_{\gamma_d})$,
with $f_J\in H^\infty_0 \left( \prod_{i \in J} B_{\gamma_i} \right)$, 
we let $f(T_1,\ldots,T_d) = \sum_{J \subset \left\lbrace 1,\ldots,d \right\rbrace} f_J(T_1,\ldots,T_d)$,
where every $f_J(T_1,\ldots,T_d)$ is defined by (\ref{fTi}). The mapping
$f\mapsto f(T_1,\ldots,T_d)$ is called the functional calculus mapping associated with
$(T_1,\ldots,T_d)$. As in the sectorial case (see Lemma \ref{Homo}), one shows that 
this is an algebra homomorphism from $H_{0,1}^\infty(B_{\gamma_1} \times \cdots \times B_{\gamma_d})$ 
into $B(X)$.

\begin{definition}
We say that $(T_1,\ldots,T_d)$ admits an $H^\infty(B_{\gamma_1} \times \cdots \times B_{\gamma_d})$ 
joint functional calculus if the above functional calculus mapping is bounded,
that is, there exists a constant $K >0$ such that for every $f$ 
in $H_{0,1}^\infty(B_{\gamma_1} \times \cdots \times B_{\gamma_d})$, we have
\begin{equation} \label{calcjointRitt}
\left\| f(T_1,\ldots,T_d) \right\| \leq K \left\| f \right\|_{\infty, B_{\gamma_1} 
\times \cdots \times B_{\gamma_d}}.
\end{equation} 
\end{definition}

As in the sectorial case, we observe that $(T_1,\ldots,T_d)$
admits an $H^\infty(B_{\gamma_1} \times \cdots \times B_{\gamma_d})$ 
joint functional calculus if and only if 
$f\mapsto f(T_1,\ldots, T_d)$ is bounded on 
$H^\infty_0 \left( \prod_{i \in J} B_{\gamma_i} \right)$ for any $J\subset\{1,\ldots,d\}$.
This follows from the fact that  each $q_i$ is a contraction, hence each $Q_J$ is bounded.

Further if $(T_1,\ldots,T_d)$ admits an $H^\infty(B_{\gamma_1} \times \cdots \times 
B_{\gamma_d})$  joint functional calculus, then 
for every $k=1,\ldots,d$, $T_k$ admits an $H^\infty(\Sigma_{\theta_k})$ functional calculus 
in the  sense of \cite[Definition 2.4]{LM}.

It is natural to consider polynomial functional calculus in this context.
We let $\P_d$ denote the algebra of all complex valued polynomials 
in $d$ variables. Clearly $\P_d$ can be regarded as a subalgebra of 
$H_{0,1}^\infty(B_{\gamma_1} \times \cdots \times B_{\gamma_d})$
and for $\phi\in\P_d$, the definition of $\phi(T_1,\ldots,T_d)$ given
by replacing the variables $(z_1,\ldots,z_d)$ by the operators
$(T_1,\ldots,T_d)$ coincides with the
one given by the functional calculus mapping. This follows
from the basic properties of the Dunford-Riesz functional calculus.
We will show below that to obtain 
an  $H^\infty(B_{\gamma_1} \times\cdots \times B_{\gamma_d})$ joint functional calculus 
for $(T_1,\ldots,T_d)$, it suffices to consider polynomials in (\ref{calcjointRitt}).

To prove this result, we will use the following form of Runge's Lemma.

\begin{lemma}\label{Runge}
Let $d \geq 1$ be an integer and $V_1,\ldots,V_d$ be compact subsets of $\mathbb{C}$ such 
that $\mathbb{C} \setminus V_i$ is connected for all $i=1,\ldots,d$. 
Let $\Omega_1,\ldots,\Omega_d$ be open subsets of $\mathbb{C}$ such that $V_i \subset \Omega_i$ for 
all $i=1,\ldots,d$. 
Let $f\colon \Omega_1 \times \cdots \times \Omega_d\to \Cdb\,$
be a holomorphic function. Then there exists a sequence $(\phi_m)_{m\geq 1}$ in $\mathcal{P}_d$ 
which converges uniformly to $f$ on $V_1 \times \cdots \times V_d$.
\end{lemma}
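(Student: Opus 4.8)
The plan is to reduce the multivariable statement to the classical single-variable Runge approximation theorem by an iterated (variable-by-variable) approximation argument. First I would recall the one-variable fact: if $V\subset\mathbb{C}$ is compact with $\mathbb{C}\setminus V$ connected and $g$ is holomorphic on an open neighbourhood of $V$, then $g$ can be uniformly approximated on $V$ by polynomials. I will use this in the following strengthened form: if $E$ is any compact space (a product of the other $V_j$'s, or a finite grid therein) and $g\colon \Omega\times E\to\mathbb{C}$ is continuous and holomorphic in the first variable, then $g$ is approximated uniformly on $V\times E$ by functions of the form $\sum_\ell p_\ell(\lambda)h_\ell$ with $p_\ell$ polynomials and $h_\ell\in C(E)$; this follows by applying the Cauchy-integral representation of the Runge approximation on a fixed contour $\Gamma\subset\Omega$ surrounding $V$, because the approximating polynomials are produced by Riemann sums of $\frac{1}{2\pi i}\int_\Gamma \frac{g(\zeta,\cdot)}{\zeta-\lambda}\,d\zeta$ followed by a rational-to-polynomial Runge step, and all the relevant estimates are uniform in the parameter lying in the compact set $E$.

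The induction on $d$ then runs as follows. For $d=1$ this is exactly classical Runge. Assume the result for $d-1$. Given $f$ holomorphic on $\Omega_1\times\cdots\times\Omega_d$, fix a compact polydisc-type neighbourhood and apply the parametrised one-variable statement above in the last variable $\lambda_d$, with parameter $(\lambda_1,\ldots,\lambda_{d-1})$ ranging over $V_1\times\cdots\times V_{d-1}$: this yields, for each $\varepsilon>0$, a finite sum $\sum_{\ell} p_\ell(\lambda_d)\,g_\ell(\lambda_1,\ldots,\lambda_{d-1})$ approximating $f$ to within $\varepsilon$ uniformly on $V_1\times\cdots\times V_d$, where each $p_\ell\in\mathcal{P}_1$ and each $g_\ell$ is holomorphic on a neighbourhood of $V_1\times\cdots\times V_{d-1}$ (one can take $g_\ell$ to be $f$ integrated against a fixed kernel in $\lambda_d$, hence holomorphic in $(\lambda_1,\ldots,\lambda_{d-1})$ on $\Omega_1\times\cdots\times\Omega_{d-1}$). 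Now apply the induction hypothesis to each $g_\ell$ to get polynomials $\psi_\ell\in\mathcal{P}_{d-1}$ with $\|g_\ell-\psi_\ell\|_{\infty,V_1\times\cdots\times V_{d-1}}$ as small as we like; then $\phi=\sum_\ell p_\ell(\lambda_d)\psi_\ell(\lambda_1,\ldots,\lambda_{d-1})\in\mathcal{P}_d$ satisfies $\|f-\phi\|_{\infty,V_1\times\cdots\times V_d}\le 2\varepsilon$ for an appropriate choice. Letting $\varepsilon=1/m$ produces the desired sequence $(\phi_m)_{m\ge 1}$.

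The step I expect to be the main (though modest) obstacle is making the parametrised one-variable Runge approximation genuinely uniform over the compact parameter set and ensuring that the intermediate functions $g_\ell$ are holomorphic, not merely continuous, in the remaining variables so that the induction hypothesis applies. This is handled cleanly by choosing a single contour $\Gamma\subset\Omega_d$ with $V_d$ in its interior, writing $f(\lambda_1,\ldots,\lambda_d)=\frac{1}{2\pi i}\int_\Gamma \frac{f(\lambda_1,\ldots,\lambda_{d-1},\zeta)}{\zeta-\lambda_d}\,d\zeta$ for $\lambda_d$ in a neighbourhood of $V_d$, approximating the integral by a Riemann sum $\sum_k c_k\,\frac{f(\lambda_1,\ldots,\lambda_{d-1},\zeta_k)}{\zeta_k-\lambda_d}$ (the error is uniform because $f$ is uniformly continuous on the compact set $V_1\times\cdots\times V_{d-1}\times\Gamma$ and $\zeta_k-\lambda_d$ is bounded away from $0$), and finally replacing each rational function $\frac{1}{\zeta_k-\lambda_d}$ of $\lambda_d$, whose only pole $\zeta_k$ lies in the unbounded component of $\mathbb{C}\setminus V_d$, by a polynomial in $\lambda_d$ via scalar Runge. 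Each resulting coefficient function $\lambda\mapsto f(\lambda_1,\ldots,\lambda_{d-1},\zeta_k)$ is holomorphic on $\Omega_1\times\cdots\times\Omega_{d-1}$, so the induction closes. No estimate here is more than routine, so the proof is short once the parametrised Runge lemma is isolated.
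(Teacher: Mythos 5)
Your argument is correct and is essentially the proof the paper has in mind: the paper omits the proof, saying only that Rudin's one-variable argument for Theorem 13.7 ``readily extends'', and your induction on $d$ via the parametrised Cauchy integral, uniform Riemann sums over a fixed contour in $\Omega_d\setminus V_d$, and pole-pushing (one-variable Runge applied to $\frac{1}{\zeta_k-\lambda_d}$, legitimate since $\mathbb{C}\setminus V_d$ is connected) is precisely that extension, with the holomorphy of the slices $f(\cdot,\zeta_k)$ correctly ensuring the induction closes. The only cosmetic point is that for a general pair $V_d\subset\Omega_d$ the Cauchy representation may require a cycle (a finite union of closed curves) rather than a single contour, which changes nothing in your estimates.
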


In the case $d=1$, this statement is \cite[Theorem 13.7]{Rud}. The proof of the latter 
readily extends to the $d$-variable case so we omit it.

\begin{proposition}\label{Rittpolynome}
Let $d \geq 1$ be an integer and let $(T_1,\ldots,T_d)$ be a commuting family of 
Ritt operators. Let $\gamma_i \in (0,\frac{\pi}{2})$ for $i=1,\ldots,d$. 
The following assertions are equivalent.
\begin{itemize}
\item[(i)] $(T_1,\ldots,T_d)$ admits an $H^\infty(B_{\gamma_1} \times \cdots \times B_{\gamma_d})$ 
joint functional calculus.
\item[(ii)] There exists a constant $K >0$ such that for any $\phi \in \mathcal{P}_d$ we have
\begin{equation} \label{calcpolRitt}
\left\| \phi (T_1,\ldots,T_d) \right\| 
\leq K \left\| \phi \right\|_{\infty, B_{\gamma_1} \times \cdots\times B_{\gamma_d}}.
\end{equation}
\end{itemize}
\end{proposition}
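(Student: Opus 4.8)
The plan is to prove Proposition~\ref{Rittpolynome} by showing that the two conditions are equivalent, with the implication (i)$\Rightarrow$(ii) being essentially trivial and (ii)$\Rightarrow$(i) requiring the bulk of the work. For (i)$\Rightarrow$(ii), since $\P_d$ sits inside $H_{0,1}^\infty(B_{\gamma_1} \times \cdots \times B_{\gamma_d})$ and the two definitions of $\phi(T_1,\ldots,T_d)$ agree for polynomials (as noted in the text, via Dunford--Riesz), we simply restrict the bound (\ref{calcjointRitt}) to polynomials and read off (\ref{calcpolRitt}) with the same constant $K$.

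For (ii)$\Rightarrow$(i), first I would reduce to bounding $f\mapsto f(T_1,\ldots,T_d)$ on each piece $H^\infty_0\bigl(\prod_{i\in J} B_{\gamma_i}\bigr)$, using the already-established fact that this suffices (each $q_i$ is a contraction, so each $Q_J$ is bounded on $H_{0,1}^\infty$). So fix $J\subset\{1,\ldots,d\}$, say $J=\{1,\ldots,m\}$ after reindexing, and fix $f\in H^\infty_0\bigl(\prod_{i\in J} B_{\gamma_i}\bigr)$, i.e.\ $f$ depends only on $(\lambda_1,\ldots,\lambda_m)$ and satisfies the decay estimate (\ref{Hinfini0Ritt}). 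Since each $T_k$ is a Ritt operator of type $\alpha_k<\gamma_k$, choose $\beta_k\in(\alpha_k,\gamma_k)$ and recall that $\sigma(T_k)\subset\overline{B_{\beta_k}}$; also pick $\gamma_k'$ with $\beta_k<\gamma_k'<\gamma_k$. The sets $V_k=\overline{B_{\gamma_k'}}$ are compact with connected complement in $\mathbb{C}$ (a Stolz domain is simply connected, being a convex region, and its closure has connected complement), and $V_k\subset B_{\gamma_k}$. By Lemma~\ref{Runge} applied with $\Omega_k=B_{\gamma_k}$ there is a sequence $(\phi_\ell)_{\ell\geq 1}$ of polynomials in $m$ variables converging uniformly to $f$ on $V_1\times\cdots\times V_m$.

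The key step is then to pass the estimate (\ref{calcpolRitt}) through to the limit. Two things must be controlled. First, $\|\phi_\ell\|_{\infty, B_{\gamma_1}\times\cdots\times B_{\gamma_d}}$ must be compared to $\|f\|_{\infty, B_{\gamma_1}\times\cdots\times B_{\gamma_d}}$: here one invokes the maximum modulus principle on each Stolz domain. Indeed, since $B_{\gamma_k}\subset V_k$ may fail, instead work with the slightly larger chain: replace $B_{\gamma_k}$ in (\ref{calcpolRitt}) by $B_{\gamma_k'}$ — the polynomial estimate (\ref{calcpolRitt}) holds a fortiori with $B_{\gamma_1'}\times\cdots\times B_{\gamma_d'}$ in place of $B_{\gamma_1}\times\cdots\times B_{\gamma_d}$ because the sup over the smaller product is no larger; wait — it is the reverse, $\|\phi\|_{\infty,B_{\gamma_1'}\times\cdots}\leq\|\phi\|_{\infty,B_{\gamma_1}\times\cdots}$, so (\ref{calcpolRitt}) does \emph{not} immediately transfer. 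The correct route is: apply (\ref{calcpolRitt}) directly, and bound $\|\phi_\ell\|_{\infty,B_{\gamma_1}\times\cdots\times B_{\gamma_d}}$. Since $\phi_\ell$ is a polynomial, by the maximum modulus principle its sup over $B_{\gamma_1}\times\cdots\times B_{\gamma_d}$ equals its sup over the distinguished boundary, hence is controlled by its sup over $\overline{B_{\gamma_1}}\times\cdots\times\overline{B_{\gamma_d}}$; but $\phi_\ell$ converges uniformly to $f$ only on the smaller $V_1\times\cdots\times V_m$, not on $\overline{B_{\gamma_k}}$. So the clean fix is to run Runge's lemma with target the \emph{fixed larger} open sets and compacts chosen as $V_k=\overline{B_{\gamma_k'}}$ but then apply the hypothesis (ii) with the Stolz angles $\gamma_k'$ — which is legitimate since (ii) is assumed for the given $\gamma_k$ and $B_{\gamma_k'}\subset B_{\gamma_k}$ forces $\|\phi\|_{\infty,B_{\gamma_1'}\times\cdots}\le\|\phi\|_{\infty,B_{\gamma_1}\times\cdots}$, giving $\|\phi(T_1,\ldots,T_d)\|\le K\|\phi\|_{\infty,B_{\gamma_1}\times\cdots\times B_{\gamma_d}}$ still, and we now take the limit of $\phi_\ell\to f$ uniformly on $\overline{B_{\gamma_1'}}\times\cdots$ to get $\|\phi_\ell\|_{\infty,B_{\gamma_1}\times\cdots}$ bounded in terms of $\|f\|_{\infty,B_{\gamma_1}\times\cdots}$ up to $o(1)$. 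Second, $\phi_\ell(T_1,\ldots,T_d)\to f(T_1,\ldots,T_d)$ in $B(X)$: write $f(T_1,\ldots,T_d)$ via the contour integral (\ref{fTi}) over $\prod_{i\in J}\partial B_{\beta_i}$, do the same for $\phi_\ell$ (legitimate since $\sigma(T_i)\subset B_{\beta_i}\cup\{1\}$ and $\partial B_{\beta_i}$ avoids the spectrum except at $1$, where the decay (\ref{Hinfini0Ritt}) of $f$ — and of $\phi_\ell$ after subtracting its value at $1$ — tames the resolvent blow-up), and estimate $\|\phi_\ell(T_1,\ldots,T_d)-f(T_1,\ldots,T_d)\|$ by $\bigl(\tfrac{1}{2\pi}\bigr)^{|J|}\int_{\prod\partial B_{\beta_i}}|\phi_\ell-f|\,\prod\|R(\lambda_i,T_i)\|\,|d\lambda_i|$, which tends to $0$ by dominated convergence — uniform convergence on the compact parts of the contours plus the integrable resolvent bound (\ref{majRittbeta}) near $1$ together with uniform smallness of $\phi_\ell - f$ there. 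Combining, $\|f(T_1,\ldots,T_d)\|=\lim_\ell\|\phi_\ell(T_1,\ldots,T_d)\|\le K\limsup_\ell\|\phi_\ell\|_{\infty,B_{\gamma_1}\times\cdots\times B_{\gamma_d}}\le K\|f\|_{\infty,B_{\gamma_1}\times\cdots\times B_{\gamma_d}}$, which is (\ref{calcjointRitt}) on $H^\infty_0\bigl(\prod_{i\in J}B_{\gamma_i}\bigr)$, hence (i).

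The main obstacle I anticipate is the interplay of the three families of angles $\alpha_k<\beta_k<\gamma_k'<\gamma_k$ and making sure the contour integral representation of $\phi_\ell(T_1,\ldots,T_d)$ genuinely coincides with the polynomial plugged into the operators: the point $1$ lies on $\partial B_{\beta_i}$, so one must split $\phi_\ell=\phi_\ell(1,\ldots,1)+(\phi_\ell-\phi_\ell(1,\ldots,1))$, treat the constant part separately (it contributes $\phi_\ell(1,\ldots)I_X$), and note the remainder vanishes suitably at the corner $1$ so that the Cauchy kernel is integrable there; this is exactly the single-variable computation from \cite{LM} done coordinatewise, and it is where one must be careful rather than clever. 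The uniform bound on $\|\phi_\ell\|_{\infty,B_{\gamma_1}\times\cdots\times B_{\gamma_d}}$ is the other delicate point, handled as above by choosing the Runge approximation to converge uniformly on closed Stolz domains $\overline{B_{\gamma_k'}}$ strictly inside $B_{\gamma_k}$ and invoking the maximum modulus principle for the polynomials $\phi_\ell$ on each factor.
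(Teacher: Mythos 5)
Your overall strategy---polynomial approximation via Lemma \ref{Runge} followed by a limiting argument---is the same as the paper's, but as written it has two genuine gaps, both caused by keeping the operators $T_1,\ldots,T_d$ fixed and working on domains and contours whose closures contain the point $1$. First, the sup-norm control fails: Lemma \ref{Runge} gives uniform convergence of $\phi_\ell$ to $f$ only on the compact set $\overline{B_{\gamma_1'}}\times\cdots\times\overline{B_{\gamma_d'}}$ and says nothing about the size of $\phi_\ell$ on $B_{\gamma_k}\setminus\overline{B_{\gamma_k'}}$; the maximum modulus principle sends you to $\partial B_{\gamma_1}\times\cdots\times\partial B_{\gamma_d}$, which lies outside the region of convergence, so your claim that $\|\phi_\ell\|_{\infty,B_{\gamma_1}\times\cdots\times B_{\gamma_d}}$ is bounded by $\|f\|_{\infty,B_{\gamma_1}\times\cdots\times B_{\gamma_d}}$ up to $o(1)$ is unjustified (the approximants may be huge near $\partial B_{\gamma_k}$), and then (\ref{calcpolRitt}) applied to $\phi_\ell$ gives a right-hand side you cannot control. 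Second, the convergence $\phi_\ell(T_1,\ldots,T_d)\to f(T_1,\ldots,T_d)$ is not established: besides the fact that identifying the algebraically defined $\phi_\ell(T_1,\ldots,T_d)$ with contour integrals over $\prod_{i}\partial B_{\beta_i}$ requires the full $2^d$-fold decomposition by the projections $Q_J$ (subtracting $\phi_\ell(1,\ldots,1)$ does not produce decay in each variable separately), the dominated-convergence estimate you invoke breaks down at the corner: by (\ref{majRittbeta}) one only has $\|R(\lambda_i,T_i)\|\lesssim |1-\lambda_i|^{-1}$ on $\partial B_{\beta_i}$, which is \emph{not} integrable near $1$, and $\phi_\ell-f$ is merely uniformly small on $\overline{B_{\gamma_k'}}$ with no uniform vanishing rate at $1$ (uniform convergence up to the boundary point $1$ gives no control of the rate of decay there), so $\int |\phi_\ell-f|\,\prod_i\|R(\lambda_i,T_i)\|\,\prod_i|d\lambda_i|$ need not tend to $0$.

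The paper's proof avoids both problems with one device you are missing: a scaling in the operators. For $0<r<r'<1$ one has $\sigma(rT_i)\subset r'B_{\gamma_i}$, so the Dunford--Riesz calculus applies on the contour $\partial(r'B_{\gamma_1})\times\cdots\times\partial(r'B_{\gamma_d})$, which stays at positive distance from the spectra and from $1$; the resolvents are then uniformly bounded and uniform convergence of $\phi_m\to f$ on $r'\overline{B_{\gamma_1}}\times\cdots\times r'\overline{B_{\gamma_d}}$ (Runge with $V_i=r'\overline{B_{\gamma_i}}$) immediately yields $\phi_m(rT_1,\ldots,rT_d)\to f(rT_1,\ldots,rT_d)$. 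Moreover $\phi_m(rT_1,\ldots,rT_d)=\psi_m(T_1,\ldots,T_d)$ for the polynomial $\psi_m(\lambda_1,\ldots,\lambda_d)=\phi_m(r\lambda_1,\ldots,r\lambda_d)$, so hypothesis (ii) gives $\|\phi_m(rT_1,\ldots,rT_d)\|\le K\|\phi_m\|_{\infty,rB_{\gamma_1}\times\cdots\times rB_{\gamma_d}}\le K\|\phi_m\|_{\infty,r'B_{\gamma_1}\times\cdots\times r'B_{\gamma_d}}$, i.e.\ the sup norm is evaluated exactly on the set where the Runge approximation converges; letting $m\to\infty$ yields $\|f(rT_1,\ldots,rT_d)\|\le K\|f\|_{\infty,B_{\gamma_1}\times\cdots\times B_{\gamma_d}}$, and finally $r\to1^-$ is handled by dominated convergence in the absolutely convergent integral (\ref{fTi}), where the decay (\ref{Hinfini0Ritt}) of $f$ is available. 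You should restructure the proof of (ii)$\Rightarrow$(i) around this two-parameter limit ($m\to\infty$ first, then $r\to1^-$); without the scaling, neither of your two key estimates goes through.
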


\begin{proof}
The implication
(i) $\Rightarrow$ (ii) is obvious. Conversely assume (ii). 
As noticed after (\ref{calcjointRitt}) it suffices, to prove (i), 
to establish the boundedness of $f\mapsto f(T_1,\ldots,T_d)$ on 
$H^\infty_0\left(\prod_{i\in J} B_{\gamma_i}\right)$
for any $J\subset\{1,\ldots,d\}$.
By induction, it actually suffices to prove the estimate
\begin{equation}\label{goal}
\left\| f (T_1,\ldots,T_d) \right\| \leq K \left\| f \right\|_{\infty, 
B_{\gamma_1} \times \cdots \times B_{\gamma_d}}
\end{equation} 
for any $f$ in $H_{0}^{\infty}(B_{\gamma_1} \times \cdots \times B_{\gamma_d})$.

Let $f$ be such a function and consider
$r \in (0,1)$ and $r' \in (r,1)$. Let $\Gamma=
\partial (r' B_{\gamma_1}) \times \cdots \times \partial(r' B_{\gamma_d})$, 
where all the $ \partial(r' B_{\gamma_{i}})$ are oriented counterclockwise. 
By Lemma \ref{Runge} applied with $V_i = r' \overline{B_{\gamma_i}}$ and $\Omega_i=B_{\gamma_i}$, 
there exists a sequence $(\phi_m)_{m\geq 1}$ of $\mathcal{P}_d$ such that 
$\phi_m \rightarrow f$ uniformly on
the compact set $r' \overline{B_{\gamma_1}} \times \cdots \times  r' \overline{B_{\gamma_d}}$.

Since we have $\sigma(rT_i) \subset r' B_{\gamma_i}$ for all $i=1,\ldots,d$, 
the Dunford-Riesz functional calculus provides 
\begin{equation*}
\phi_m(rT_1,\ldots,rT_d)  = \left( \dfrac{1}{2 \pi i} \right)^d \int_\Gamma 
\phi_m(\lambda_1,\ldots,\lambda_d) R(\lambda_1,rT_1) \cdots R(\lambda_d,rT_d) \, d\lambda_1 \cdots d\lambda_d
\end{equation*}
and
\begin{equation*}
f(rT_1,\ldots,rT_d) = \left( \dfrac{1}{2 \pi i} \right)^d \int_\Gamma 
f (\lambda_1,\ldots,\lambda_d) R(\lambda_1,rT_1) \cdots R(\lambda_d,rT_d) \, d\lambda_1 \cdots d\lambda_d.
\end{equation*}
The uniform convergence of $(\phi_m)_{m\geq 1}$ to $f$ on 
$r' \overline{B_{\gamma_1}} \times \cdots \times  r' \overline{B_{\gamma_d}}$ implies that 
\begin{equation*}
\phi_m(rT_1,\ldots,rT_d) \underset{m \to \infty}{\longrightarrow} f(rT_1,\ldots,rT_d)
\quad\hbox{and}\quad
\left\| \phi_m \right\|_{\infty, r' B_{\gamma_1} 
\times \cdots \times r' B_{\gamma_d}}\underset{m \to \infty}{\longrightarrow}
\left\| f \right\|_{\infty, r' B_{\gamma_1} 
\times \cdots \times r' B_{\gamma_d}}
\end{equation*}
Using (\ref{calcpolRitt}) we have, for any interger $m\geq 1$,
\begin{equation*}
\left\| \phi_m(rT_1,\ldots,rT_d) \right\| 
\leq K \left\| \phi_m \right\|_{\infty, r B_{\gamma_1} \times 
\cdots \times r B_{\gamma_d}} \leq K\left\| \phi_m \right\|_{\infty, r' B_{\gamma_1} 
\times \cdots \times r' B_{\gamma_d}}.
\end{equation*}
Passing to the limit when $m \to \infty$, we deduce that 
\begin{equation*}
\left\| f (rT_1,\ldots,rT_d) \right\| \leq K 
\left\| f \right\|_{\infty, r' B_{\gamma_1} \times \cdots \times r' B_{\gamma_d}}.
\end{equation*}
Finally, we have $\displaystyle{\lim_{r \to 1} f(rT_1,\ldots,rT_d)} = f(T_1,\ldots,T_d)$ 
by Lebesgue's dominated convergence Theorem. We deduce (\ref{goal}).
\end{proof}

\section{Automaticity of the $H^\infty$ joint funtional calculus}\label{Automatic}
Let $(T_1,\ldots,T_d)$ be a commuting family of Ritt operators on some Banach space $X$. If 
this $d$-tuple admits an $H^\infty$ joint functional calculus, then each $T_k$ admits 
an $H^\infty$ functional calculus (see Section \ref{FC}). The purpose of this section is to show that
the converse holds true if either $X$ is a Banach lattice or $X$ (or its dual space $X^*$) has property $(\alpha)$.
A similar result is also established in the sectorial case, see Theorem \ref{Th32} below.

We refer the reader to \cite{LT2} for definitions and basic properties of Banach lattices. 

In order to define property $(\alpha)$, and also for further purposes, 
we need some background on Rademacher averages. Let $I$ be a countable set and let 
$(r_k)_{k \in I}$ be a independent family of Rademacher variables on some probability 
space $(\Omega_0,\mathbb{P})$. Let $X$ be a Banach space.
If $(x_k)_{k\in I}$ is a finitely supported family in
$X$, we let
\begin{equation*}
\left\| \sum_{k\in I} r_k \otimes x_k \right\|_{\text{Rad}(I;X)} = 
\left( \int_{\Omega_0} \left\| \sum_{k\in I} r_k(t) 
\, x_k\right\|_X^2 d\mathbb{P}(t) \right)^\frac{1}{2}.
\end{equation*}
This is the norm of $\sum_{k\in I} r_k \otimes x_k$ is $L^2(\Omega_0;X)$. 
The closure of all finite sums $\sum_{k\in I} r_k \otimes x_k$
in $L^2(\Omega_0;X)$ will be denoted by 
$\text{Rad}(I;X)$. In the case when $I=\Ndb^*$, we write 
$\text{Rad}(X) =\text{Rad}(\Ndb^*; X)$ for simplicity.

We say that $X$ has property $(\alpha)$ 
if there exists a constant $C>0$ such that for any integer $n \geq 1$, for
any family $(a_{i,j})_{1 \leq i,j \leq n}$
of complex numbers and for any family $(x_{i,j})_{1 \leq i,j \leq n}$
in $X$, we have
\begin{equation} \label{alpha}
\left\| \sum_{1 \leq i,j \leq n} a_{i,j} r_i \otimes r_j 
\otimes x_{i,j} \right\|_{\text{Rad}(\text{Rad}(X))} \leq  
C \,\sup_{i,j} \left\{\left| a_{i,j} \right| \right\} 
\left\| \sum_{1 \leq i,j \leq n} r_i \otimes r_j \otimes x_{i,j} 
\right\|_{\text{Rad}(\text{Rad}(X))}.
\end{equation}
This property was introduced by Pisier in \cite{P2}. It plays a key role in many
issues related to $H^\infty$-functional calculus, 
see in particular \cite{KW1,KW2,LLLM,LM}.

We recall that all Banach lattices with finite cotype have property
$(\alpha)$. In particular for any $p\in[1,\infty)$, $L^p$-spaces have 
property $(\alpha)$. On the contrary, infinite dimensional noncommutative $L^p$-spaces
(for $p\not=2$) do not have 
property $(\alpha)$. This goes back to \cite{P2}.

The main result of this section is the following.

\begin{theorem} \label{Th32}
Let $X$ be a Banach space. Assume that either $X$ is a Banach lattice,
or $X$ or $X^*$ has property $(\alpha)$.
Let $d\geq 2$ be an integer. Then the following two properties hold :
\begin{itemize}
\item [(\textbf{P1})] Let $(T_1,\ldots,T_d)$ be a commuting 
$d$-tuple of Ritt operators on $X$ and assume that 
for some $0<\gamma_1,\ldots,\gamma_d<\frac{\pi}{2}$,  $T_k$ has an $H^\infty(B_{\gamma_k})$ functional
calculus for any $k=1,\ldots,d$. Then for any $\gamma_k'\in(\gamma_k,\frac{\pi}{2}),\, 
k=1,\ldots,d$, $(T_1,\ldots,T_d)$ admits an $H^\infty(B_{\gamma_1'}\times\cdots\times B_{\gamma_d'})$ joint functional
calculus.
\item [(\textbf{P2})] Let $(A_1,\ldots,A_d)$ be a commuting $d$-tuple of sectorial operators 
on $X$ and assume that 
for some $0<\theta_1,\ldots,\theta_d<\pi$,  $A_k$ has an $H^\infty(\Sigma_{\theta_k})$ functional
calculus for any $k=1,\ldots,d$. Then for any $\theta_k'\in(\theta_k,\pi),\, 
k=1,\ldots,d$, $(A_1,\ldots,A_d)$ admits an 
$H^\infty(\Sigma_{\theta_1'}\times \cdots \times \Sigma_{\theta_d'})$ joint functional calculus.
\end{itemize}
\end{theorem}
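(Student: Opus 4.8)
The two assertions (\textbf{P1}) and (\textbf{P2}) are really the same statement; since a $d$-tuple $(T_1,\ldots,T_d)$ of Ritt operators of types $\alpha_k$ corresponds to the $d$-tuple $(I-T_1,\ldots,I-T_d)$ of sectorial operators of types $<\pi/2$, and the Stolz-domain Franks--McIntosh decomposition from Section~\ref{Appendix} mirrors the sectorial one, I would prove (\textbf{P2}) in detail and then indicate that (\textbf{P1}) follows by the change of variable $A_k=I-T_k$ together with the ad-hoc decomposition for products of Stolz domains. By the reduction recorded right after the definition of joint functional calculus, it suffices to bound $f\mapsto f(A_1,\ldots,A_d)$ on each $H^\infty_0\bigl(\prod_{i\in J}\Sigma_{\theta_i'}\bigr)$, and by relabelling we may assume $J=\{1,\ldots,d\}$; so the real task is: given $f\in H^\infty_0(\Sigma_{\theta_1'}\times\cdots\times\Sigma_{\theta_d'})$, estimate $\norm{f(A_1,\ldots,A_d)}$ by $\norm{f}_\infty$ up to a constant independent of $f$.

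The heart of the argument is the multivariable Franks--McIntosh decomposition. Using it, I would write any $f\in H^\infty_0$ on the product of the larger sectors as an absolutely convergent series
\begin{equation*}
f(z_1,\ldots,z_d)=\sum_{n\in\mathbb{Z}^d} c_n\,\prod_{k=1}^d \phi^{(k)}_{n_k}(z_k),
\end{equation*}
where $\sum_{n}|c_n|\lesssim \norm{f}_{\infty,\prod_k\Sigma_{\theta_k'}}$ and each $\phi^{(k)}_{n_k}$ is an elementary function in $H^\infty_0(\Sigma_{\theta_k})$ (a suitable normalized bump concentrated at scale $2^{n_k}$ along the sector axis), with $H^\infty(\Sigma_{\theta_k})$-norms uniformly bounded. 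Substituting the operators and using that the functional calculus mapping is a homomorphism (Lemma~\ref{Homo}), this gives
\begin{equation*}
f(A_1,\ldots,A_d)=\sum_{n\in\mathbb{Z}^d} c_n\,\prod_{k=1}^d \phi^{(k)}_{n_k}(A_k),
\end{equation*}
so the problem reduces to a single uniform bound: there is a constant $M$ with
\begin{equation*}
\Bignorm{\prod_{k=1}^d \phi^{(k)}_{n_k}(A_k)}\leq M\qquad\text{for all }n\in\mathbb{Z}^d.
\end{equation*}
Granting this, $\norm{f(A_1,\ldots,A_d)}\leq M\sum_n|c_n|\lesssim M\norm{f}_\infty$, which is the desired estimate.

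The uniform bound on the products $\prod_k \phi^{(k)}_{n_k}(A_k)$ is where the lattice / property-$(\alpha)$ hypothesis enters, and this is the step I expect to be the main obstacle. Each factor $\phi^{(k)}_{n_k}(A_k)$ is a single-operator functional-calculus operator, so the hypothesis that $A_k$ has bounded $H^\infty(\Sigma_{\theta_k})$ calculus bounds each factor individually; but one needs to control an arbitrarily long \emph{product} uniformly in $n$. The standard device (as in \cite{LLLM,KW1}) is a square-function / $R$-boundedness argument: bounded $H^\infty$ calculus of $A_k$ on such $X$ implies that the family $\{\phi^{(k)}_{m}(A_k): m\in\mathbb{Z}\}$ is $R$-bounded, and moreover one has two-sided square-function estimates in the Rademacher norm $\Rad(X)$; property $(\alpha)$ (or the lattice structure, via Khintchine-type inequalities in $L^p(\Omega;\ell^2)$) is exactly what lets one iterate these one variable at a time, passing through $\Rad(\Rad(\cdots\Rad(X)))$ and collapsing the nested averages without losing constants. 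Concretely I would argue by induction on $d$: fix $n_d$, view $\prod_{k<d}\phi^{(k)}_{n_k}(A_k)$ as acting in a fixed Banach space to which the inductive hypothesis applies, use $R$-boundedness of $\{\phi^{(d)}_{n_d}(A_d)\}$ to pull the $d$-th factor out with a constant, and use property $(\alpha)$ to commute the Rademacher average associated to the $d$-th variable past the ones from the first $d-1$. For a Banach lattice one replaces the abstract $\Rad$-spaces by the concrete $L^p$-valued square functions, where the required interchange of sums is automatic. The passage from $\gamma_k$ (resp.\ $\theta_k$) to $\gamma_k'$ (resp.\ $\theta_k'$) is needed precisely so that the elementary functions $\phi^{(k)}_m$ produced by the decomposition live on the strictly smaller sector $\Sigma_{\theta_k}$ on which $A_k$ has its $H^\infty$ calculus, while $f$ is only assumed bounded on the larger $\Sigma_{\theta_k'}$; this is the usual loss of angle in Franks--McIntosh-type arguments and is harmless here. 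Finally, (\textbf{P1}) is deduced from (\textbf{P2}) by noting that $f\in H^\infty_0(\prod_k B_{\gamma_k'})$ pulls back, under $\lambda_k\mapsto 1-z_k$, to a function in $H^\infty_0(\prod_k\Sigma_{\theta_k'})$ for appropriate $\theta_k'<\pi/2$, that $(I-T_1,\ldots,I-T_d)$ is a commuting $d$-tuple of sectorial operators with bounded individual $H^\infty$ calculus, and that the two functional calculi are compatible; the Stolz-domain version of the decomposition in Section~\ref{Appendix} makes this completely parallel.
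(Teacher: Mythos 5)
The central step of your plan is a multivariable Franks--McIntosh decomposition with absolutely summable coefficients, $f=\sum_{n}c_n\prod_k\phi^{(k)}_{n_k}$ with $\sum_n|c_n|\lesssim\|f\|_\infty$ and the $\phi^{(k)}_m$ uniformly bounded in $H^\infty_0(\Sigma_{\theta_k})$. No such decomposition can exist, and this is where the proposal breaks down: if it did, then for any commuting $d$-tuple whose members individually have bounded calculi you would get $\|f(A_1,\ldots,A_d)\|\le\sum_n|c_n|\prod_k\|\phi^{(k)}_{n_k}(A_k)\|\lesssim\|f\|_\infty$ on an \emph{arbitrary} Banach space, so the joint calculus would be automatic with no hypothesis on $X$ whatsoever --- contradicting the fact, recalled right after Theorem \ref{Th11}, that the conclusion fails on general Banach spaces. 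The genuine decomposition (Theorem \ref{FranksMcIntoshplusvariables}) only gives a sup bound $|a_{i_1,\ldots,i_d}|\le C\|h\|_\infty$; the summability is carried by the elementary functions ($\sup_{\zeta}\sum_i|\Psi_{k,i}(\zeta)|<\infty$), and each factor is deliberately split as a product $\Psi_{k,i}\tilde{\Psi}_{k,i}$ via inner-outer factorisation. The whole difficulty is then to estimate the multiple sum $\sum a_{i_1,\ldots,i_d}\Psi_{1,i_1}(T_1)\tilde{\Psi}_{1,i_1}(T_1)\cdots\Psi_{d,i_d}(T_d)\tilde{\Psi}_{d,i_d}(T_d)$, which cannot be done term by term; the paper does it by dualising, putting the $\Psi$'s on $x$ and the $\tilde{\Psi}$'s on $x^*$, and invoking the Rademacher-duality inequality $(A_d)$ (Propositions \ref{prop32}--\ref{prop34}), which is exactly where the lattice structure (Krivine calculus plus Khintchine) or property $(\alpha)$ enters. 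Your proposed use of the geometric hypothesis --- an $R$-boundedness induction to bound $\sup_n\|\prod_k\phi^{(k)}_{n_k}(A_k)\|$ --- attaches to the wrong step: that supremum is already controlled by the individual calculi alone, while the step that genuinely needs the geometry of $X$ (summation over the multi-index with only bounded coefficients) is not addressed.

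A second, independent gap is the reduction of (\textbf{P1}) to (\textbf{P2}) via $A_k=I_X-T_k$. A bounded holomorphic function on a product of Stolz domains does not extend, with norm control, to a bounded holomorphic function on a product of sectors; so a joint $H^\infty(\Sigma_{\theta_1'}\times\cdots\times\Sigma_{\theta_d'})$ calculus for $(I-T_1,\ldots,I-T_d)$ does not formally yield a joint $H^\infty(B_{\gamma_1'}\times\cdots\times B_{\gamma_d'})$ calculus for $(T_1,\ldots,T_d)$ --- already for $d=1$ this implication is a nontrivial statement, not a mere ``compatibility of the two calculi'' (only the converse direction follows easily by contour deformation). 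This is precisely why the paper proves (\textbf{P1}) directly, using the Stolz-domain decomposition established in Section \ref{Appendix}, and then remarks that (\textbf{P2}) is obtained by the same argument with sectors in place of Stolz domains.
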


Property (\textbf{P2}) for $d=2$ was proved in \cite{LLLM}.
The proof for $d\geq 3$ is a simple adaptation of the argument devised 
in the latter 
paper. In the special case when $X$ is an $L^p$-space for $p\in[1,\infty)$, 
property (\textbf{P2}) goes back to \cite{Al}.
Proving property (\textbf{P1}) will require the Franks-McIntosh decomposition 
presented in the Appendix.

To proceed we need more ingredients on Rademacher averages. 
Let $d\geq 1$ be an integer.

We denote by $\text{Rad}^d(X)$ the closure in $L^2(\Omega_0^d;X)$ 
of the space of all elements of the form
\begin{equation*}
\sum_{1 \leq i_1,\ldots,i_d \leq n} r_{i_1} \otimes \cdots \otimes r_{i_d} 
\otimes x_{i_1,\ldots,i_d}, \qquad n \in \mathbb{N}^*, \qquad x_{i_1,\ldots,i_d} \in X. 
\end{equation*}
Clearly we can rewrite this space as
\begin{equation}
\text{Rad}^d(X) = \underset{d \text{ times}}{\underbrace{\text{Rad}(\text{Rad}(\cdots\text{Rad}}}(X)
\cdots)).
\end{equation}
For convenience we set 
\begin{equation} \label{Ndnorm}
N_d\left([x_{i_1,\ldots,i_d}]\right) = 
\left\| \sum_{1 \leq i_1,\ldots,i_d \leq n} r_{i_1} \otimes \cdots \otimes 
r_{i_d} \otimes x_{i_1,\ldots,i_d} \right\|_{\text{Rad}^d(X)} 
\end{equation}
for any family $(x_{i_1,\ldots,i_d})_{1 \leq i_1,\ldots,i_d \leq n}$ in $X$.

We will say that $X$ satisfies property $(A_d)$ if there exists a constant 
$C>0$ such that for any integer $n \geq 1$, 
for any family of complex numbers $(a_{i_1,\ldots,i_d})_{1 \leq i_1,\ldots,i_d \leq n}$ 
and for any families $(x_{i_1,\ldots,i_d})_{1 \leq i_1,\ldots,i_d \leq n}$ in $X$ and 
$(x_{i_1,\ldots,i_d}^*)_{1 \leq i_1,\ldots,i_d \leq n}$ in $X^*$, we have
\begin{equation} \label{majAd}
\left| \sum_{i_1,\ldots,i_d} a_{i_1,\ldots,i_d} \langle x_{i_1,\ldots,i_d}^*, x_{i_1,\ldots,i_d} 
\rangle \right| \leq C  \sup_{i_1,\ldots,i_d} 
\left\{\left| a_{i_1,\ldots,i_d}\right|\right\}\, N_d\left([x_{i_1,\ldots,i_d}]\right)\, 
N_d\left([x_{i_1,\ldots,i_d}^*]\right).
\end{equation}

Theorem \ref{Th32} is a straightforward consequence of the 
next three propositions, that will be proved in the rest of this section.

\begin{proposition} \label{prop32}
If $X$ satisfies property $(A_d)$ for some integer $d \geq 2$, then \textbf{(P1)} and \textbf{(P2)} hold true.
\end{proposition}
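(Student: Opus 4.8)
The plan is to reduce the statement to a norm estimate on the functional calculus restricted to the ``top'' block $H^\infty_0(B_{\gamma_1'}\times\cdots\times B_{\gamma_d'})$ (resp.\ $H^\infty_0(\Sigma_{\theta_1'}\times\cdots\times\Sigma_{\theta_d'})$), then to expand a function of that block by a Franks--McIntosh decomposition, apply the functional calculus term by term, and finally contract the resulting bilinear form using property $(A_d)$ and one-variable discrete square function estimates. I describe the Ritt case \textbf{(P1)}; the sectorial case \textbf{(P2)} is identical with Stolz domains replaced by sectors and (\ref{Hinfini0Ritt}) by (\ref{Hinfini0}). First I would reduce: by the remark following (\ref{calcjointRitt}), since the projections $Q_J$ of (\ref{QJ}) are bounded, it suffices to bound $f\mapsto f(T_1,\ldots,T_d)$ on each $H^\infty_0\bigl(\prod_{i\in J}B_{\gamma_i'}\bigr)$; for a proper subset $J$ this is the same statement for the subfamily $(T_i)_{i\in J}$, obtained by induction on $d$ (note that $(A_d)$ implies $(A_{d'})$ for $d'\le d$, by collapsing indices). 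Using Proposition \ref{Rittpolynome} and a routine truncation it is then enough to fix $f\in H^\infty_0(B_{\gamma_1'}\times\cdots\times B_{\gamma_d'})$ and prove $\|f(T_1,\ldots,T_d)\|\lesssim\|f\|_{\infty,B_{\gamma_1'}\times\cdots\times B_{\gamma_d'}}$.

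Next I would invoke the Franks--McIntosh decomposition for products of Stolz domains (Section \ref{Appendix}): using that $f$ is holomorphic on the larger product $\prod_k B_{\gamma_k'}$, write
\[
f(\lambda_1,\ldots,\lambda_d)=\sum_{\mathbf n} a_{\mathbf n}\prod_{k=1}^d \varphi^{(k)}_{n_k}(\lambda_k)\,\psi^{(k)}_{n_k}(\lambda_k),
\]
where $\mathbf n=(n_1,\ldots,n_d)$ ranges over a product of $d$ countable sets, $\sup_{\mathbf n}|a_{\mathbf n}|\lesssim\|f\|_{\infty,\prod_k B_{\gamma_k'}}$, each $\varphi^{(k)}_{n_k}$ and $\psi^{(k)}_{n_k}$ lies in $H^\infty_0(B_{\gamma_k})$ with constants in (\ref{Hinfini0Ritt}) uniform in $n_k$, and the families $(\psi^{(k)}_n)_n$ carry the usual Littlewood--Paley-type localisation. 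Since $T_k$ has a bounded $H^\infty(B_{\gamma_k})$ functional calculus, Lemma \ref{Homo} together with a truncation-and-limit argument in the spirit of Proposition \ref{Rittpolynome} then gives
\[
\langle x^*,f(T_1,\ldots,T_d)x\rangle=\sum_{\mathbf n} a_{\mathbf n}\,\bigl\langle\bigl(\textstyle\prod_{k=1}^d \varphi^{(k)}_{n_k}(T_k)\bigr)^{*}x^*,\ \bigl(\textstyle\prod_{k=1}^d \psi^{(k)}_{n_k}(T_k)\bigr)x\bigr\rangle
\]
for all $x\in X$ and $x^*\in X^*$.

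Now I would apply property $(A_d)$. After truncating to finitely many indices $\mathbf n$ (organised as a $d$-fold indexed family) and passing to the limit, (\ref{majAd}) yields
\[
\bigl|\langle x^*,f(T_1,\ldots,T_d)x\rangle\bigr|\le C\,\sup_{\mathbf n}|a_{\mathbf n}|\;N_d\bigl(\bigl[\bigl(\textstyle\prod_{k}\varphi^{(k)}_{n_k}(T_k)\bigr)^{*}x^*\bigr]\bigr)\;N_d\bigl(\bigl[\bigl(\textstyle\prod_{k}\psi^{(k)}_{n_k}(T_k)\bigr)x\bigr]\bigr).
\]
It thus remains to prove the square function bounds $N_d\bigl(\bigl[(\prod_k\psi^{(k)}_{n_k}(T_k))x\bigr]\bigr)\lesssim\|x\|$ and, dually, $N_d\bigl(\bigl[(\prod_k\varphi^{(k)}_{n_k}(T_k))^{*}x^*\bigr]\bigr)\lesssim\|x^*\|$. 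For $d=1$ these are precisely the discrete square function estimates satisfied by a Ritt operator with bounded $H^\infty$ calculus, applied to a normalised localising family (cf.\ \cite{LM}); for general $d$ one iterates them inside $\text{Rad}^d(X)$, freezing all indices but one and using that, for $l\ne k$, the operators $\psi^{(l)}_{n_l}(T_l)$ and the Rademacher averages in the remaining coordinates commute past $\psi^{(k)}_{n_k}(T_k)$. Combining the last two displays with $\sup_{\mathbf n}|a_{\mathbf n}|\lesssim\|f\|_\infty$ gives $\|f(T_1,\ldots,T_d)\|\lesssim\|f\|_{\infty,\prod_k B_{\gamma_k'}}$, which by the first step proves \textbf{(P1)}; running the identical argument with sectors proves \textbf{(P2)}.

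The main obstacle is this last step: making the iteration of the one-variable square function estimates across the $d$ variables rigorous — the localisation built into the families $(\psi^{(k)}_n)_n$ is exactly what should keep the constant from growing with $d$ — and, in tandem, controlling the convergence of the Franks--McIntosh series once the functional calculus has been applied, so that the termwise manipulations in the two displays above are legitimate; here one argues by truncation and a limiting passage as in Proposition \ref{Rittpolynome}. The algebraic bookkeeping is absorbed by Lemma \ref{Homo} and the projections $Q_J$, but these analytic estimates are the substance of the proof.
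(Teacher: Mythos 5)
Your proposal follows the paper's proof almost verbatim: the reduction to the block $H^\infty_0(B_{\gamma_1'}\times\cdots\times B_{\gamma_d'})$ via the projections $Q_J$ and induction, the Franks--McIntosh decomposition of Theorem \ref{FranksMcIntoshplusvariables}, the duality pairing estimated by property $(A_d)$ applied to finite partial sums, and the truncation/limiting argument (first $rT_k$, then dominated convergence) are all exactly the paper's steps. The one place you diverge is the step you flag as the main obstacle, namely $N_d\bigl(\bigl[\prod_k\psi^{(k)}_{n_k}(T_k)x\bigr]\bigr)\lesssim\|x\|$: the paper does not use any square function estimates here. It uses part (1) of Theorem \ref{FranksMcIntoshplusvariables} (the case $p=1$ of (\ref{majphiij}), restated as (\ref{Psiid})): freezing all Rademacher signs $(t_1,\ldots,t_d)$ simultaneously, the norm of $\sum_{i_1,\ldots,i_d} r_{i_1}(t_1)\cdots r_{i_d}(t_d)\,\Psi_{1,i_1}(T_1)\cdots\Psi_{d,i_d}(T_d)x$ is at most $\prod_{k}\bigl\|\sum_{i_k} r_{i_k}(t_k)\Psi_{k,i_k}(T_k)\bigr\|\,\|x\|$, and each factor is uniformly bounded because the $H^\infty(B_{\gamma_k})$ calculus of $T_k$ is applied to the single function $\sum_{i_k} r_{i_k}(t_k)\Psi_{k,i_k}$, whose sup norm is controlled by the uniform $\ell^1$ bound; averaging over the signs then gives the $N_d$ estimate, and the dual estimate is identical. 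This is precisely the ``localisation'' you anticipate, but it makes the iteration-of-square-functions scheme unnecessary, and citing the discrete square function estimates of \cite{LM} literally would be off target: those concern specific families such as $k^{1/2}T^{k-1}(I-T)$, and for a general family on a general Banach space the randomized bound one needs is obtained exactly through the absolute summability of the Franks--McIntosh functions rather than through any genuinely quadratic estimate. With that step closed as above, your argument is correct and coincides with the paper's.
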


\begin{proposition} \label{prop33}
Every Banach lattice satisfies property $(A_d)$ for every integer $d \geq 2$.
\end{proposition}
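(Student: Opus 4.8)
The plan is to reduce property $(A_d)$ to a single square-function estimate in the lattice, and then to prove that estimate by induction on $d$, removing one block of Rademacher variables at each step. For a Banach lattice $Y$ and a finite index set $F$ I will write $Y(\ell^2_F)$ for the Banach lattice of families $(y_f)_{f\in F}$ in $Y$ normed by $\norm{(y_f)}_{Y(\ell^2_F)}=\bignorm{\bigl(\sum_{f\in F}|y_f|^2\bigr)^{1/2}}_Y$, the square function being understood through Krivine's functional calculus (see \cite{LT2}); recall that $Y(\ell^2_F)$ is indeed a Banach lattice and that $Y(\ell^2_F)(\ell^2_G)=Y(\ell^2_{F\times G})$ isometrically for finite $F,G$.

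\emph{Step 1: reduction to a square-function estimate.} By homogeneity I may assume $\sup_{\vec i}|a_{\vec i}|\le1$ in \eqref{majAd}, and I set $F=\{1,\ldots,n\}^d$. Combining the lattice inequality $|\langle x^*,x\rangle|\le\langle|x^*|,|x|\rangle$ with the fact (a consequence of Krivine's calculus and a pointwise Cauchy--Schwarz; see \cite{LT2}) that the duality of $X$ induces a contractive pairing $X^*(\ell^2_F)\times X(\ell^2_F)\to\Cdb$ sending $\bigl((y_f^*),(y_f)\bigr)$ to $\sum_f\langle y_f^*,y_f\rangle$, I get
$$
\Bigl|\sum_{\vec i}a_{\vec i}\langle x^*_{\vec i},x_{\vec i}\rangle\Bigr|\ \le\ \sum_{\vec i}\langle|x^*_{\vec i}|,|x_{\vec i}|\rangle\ \le\ \norm{(x^*_{\vec i})}_{X^*(\ell^2_F)}\,\norm{(x_{\vec i})}_{X(\ell^2_F)}.
$$
Hence $(A_d)$ will follow once I prove that for every Banach lattice $Y$ there is a constant $C_d$, independent of $Y$, with
\begin{equation}\label{sfgoal}
\norm{(y_{\vec i})}_{Y(\ell^2_F)}\ \le\ C_d\, N_d\bigl([y_{i_1,\ldots,i_d}]\bigr)
\end{equation}
for every finite family $(y_{\vec i})$ in $Y$: applying \eqref{sfgoal} to $X$ and to $X^*$ (which is again a Banach lattice) and multiplying, one obtains \eqref{majAd} with constant $C_d^2$.

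\emph{Step 2: proof of \eqref{sfgoal}.} The base case $d=1$ is the classical Khintchine inequality in Banach lattices: $\norm{(y_i)}_{Y(\ell^2)}\le\sqrt2\,\bignorm{\sum_i r_i\otimes y_i}_{\Rad(Y)}$ for every Banach lattice $Y$ (see \cite{LT2}). Equivalently, for every Banach lattice $W$ the assignment $\sum_k r_k\otimes w_k\mapsto(w_k)_k$ extends to a bounded operator $\iota_W\colon\Rad(W)\to W(\ell^2)$ with $\norm{\iota_W}\le\sqrt2$. For the inductive step, given a family $(y_{\vec i})_{\vec i\in F}$ with $F=\{1,\ldots,n\}^d$, I view it as an element $\eta$ of $\Rad^d(Y)=\Rad\bigl(\Rad^{d-1}(Y)\bigr)$. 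Using the induction hypothesis in the form of a bounded inclusion $\Rad^{d-1}(Y)\hookrightarrow Y(\ell^2_G)$ of norm $\le C_{d-1}$, where $G=\{1,\ldots,n\}^{d-1}$, together with the functoriality of $\Rad(\cdot)$ (a bounded $A\colon Y_1\to Y_2$ induces $\Rad(A)\colon\Rad(Y_1)\to\Rad(Y_2)$ with $\norm{\Rad(A)}=\norm A$), $\eta$ is carried, up to a factor $\le C_{d-1}$, onto an element $\sum_{i_1}r_{i_1}\otimes w_{i_1}$ of $\Rad(W)$, where $W:=Y(\ell^2_G)$ and $w_{i_1}:=(y_{i_1,i_2,\ldots,i_d})_{(i_2,\ldots,i_d)}\in W$. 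Applying $\iota_W$ and then the identification $W(\ell^2_{\{1,\ldots,n\}})=Y(\ell^2_{G})(\ell^2_{\{1,\ldots,n\}})=Y(\ell^2_F)$, this is sent to the family $(y_{\vec i})_{\vec i\in F}$ itself, whose $Y(\ell^2_F)$-norm is the left-hand side of \eqref{sfgoal}. Keeping track of the norms along this chain gives \eqref{sfgoal} with $C_d=\sqrt2\,C_{d-1}$, hence $C_d=2^{d/2}$.

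\emph{The main obstacle.} Step 1 is routine once the two quoted lattice facts are in place. The substance, and the main obstacle, is in Step 2: one must carefully justify the Krivine-functional-calculus infrastructure --- that $Y(\ell^2_F)$ is a Banach lattice, the Fubini-type identity $Y(\ell^2_F)(\ell^2_G)=Y(\ell^2_{F\times G})$, and how all of this interacts with $\Rad(\cdot)$ --- and, above all, one must use the Khintchine--Maurey comparison only in the direction $\norm{(y_i)}_{Y(\ell^2)}\lesssim\norm{\sum_i r_i\otimes y_i}_{\Rad(Y)}$, which holds in every Banach lattice. The reverse estimate requires $Y$ to have finite cotype (it fails, e.g., for $c_0$), so there is no shortcut through an isomorphism $\Rad^d(Y)\cong Y(\ell^2_F)$.
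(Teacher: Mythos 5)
Your proof is correct, and its first half coincides with the paper's: the reduction of $(A_d)$ to a square-function bound via the lattice Cauchy--Schwarz/Krivine duality inequality $\bigl|\sum a_{i_1,\ldots,i_d}\langle x^*_{i_1,\ldots,i_d},x_{i_1,\ldots,i_d}\rangle\bigr|\le \sup|a_{i_1,\ldots,i_d}|\,\bignorm{(\sum|x_{i_1,\ldots,i_d}|^2)^{1/2}}_X\bignorm{(\sum|x^*_{i_1,\ldots,i_d}|^2)^{1/2}}_{X^*}$ is exactly the paper's first step. The only real divergence is how the estimate $\bignorm{(\sum|x_{i_1,\ldots,i_d}|^2)^{1/2}}_X\lesssim N_d([x_{i_1,\ldots,i_d}])$ is obtained. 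The paper gets it in one stroke: the scalar $d$-variable Khintchine inequality, transferred pointwise to the lattice by Krivine's calculus, dominates $(\sum|x_{i_1,\ldots,i_d}|^2)^{1/2}$ by a constant times $\int_{\Omega_0^d}\bigl|\sum r_{i_1}(t_1)\cdots r_{i_d}(t_d)x_{i_1,\ldots,i_d}\bigr|\,d\Pdb^d$, and the triangle inequality together with $L^1\le L^2$ then yields the bound by $N_d$. You instead induct on $d$, applying only the one-variable Khintchine--Maurey inequality, but inside the lattice $Y(\ell^2_G)$ at each stage; this forces you to set up the extra infrastructure you rightly flag (that $Y(\ell^2_G)$ is again a Banach lattice, the Fubini identity identifying iterated square functions with the joint one, and the fact that a bounded operator between Banach spaces induces a bounded map between the corresponding Rad spaces), all of which is standard and correctly handled. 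Both routes use only the cotype-free direction of the Khintchine--Maurey comparison, as you emphasize; the paper's version is shorter because the iteration is absorbed into the scalar multi-variable Khintchine inequality, while yours avoids quoting that inequality and makes an explicit constant ($2^{d/2}$ for the square-function bound, hence $2^d$ in \eqref{majAd}) appear.
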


\begin{proposition} \label{prop34}
If $X$ or $X^*$ has property $(\alpha)$, then $X$ satisfies 
property $(A_d)$ for every integer $d \geq 2$.
\end{proposition}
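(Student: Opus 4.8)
The plan is to reduce property $(A_d)$ to a multi-indexed contraction principle in $\Rad^d(X)$ and then to extract the latter from property $(\alpha)$. Throughout, write $I=(i_1,\dots,i_d)\in(\Ndb^*)^d$ and $r_I=r_{i_1}\otimes\cdots\otimes r_{i_d}$. First I would record the estimate
\[
\Bigl|\sum_I\langle x_I^*,x_I\rangle\Bigr|\ \le\ N_d\bigl([x_I]\bigr)\,N_d\bigl([x_I^*]\bigr),
\]
which holds in \emph{every} Banach space: the functions $(t_1,\dots,t_d)\mapsto r_{i_1}(t_1)\cdots r_{i_d}(t_d)$, $I\in(\Ndb^*)^d$, are orthonormal in $L^2(\Omega_0^d)$, hence the (bilinear) duality bracket of $\sum_I r_I\otimes x_I^*\in L^2(\Omega_0^d;X^*)$ with $\sum_I r_I\otimes x_I\in L^2(\Omega_0^d;X)$ equals $\sum_I\langle x_I^*,x_I\rangle$, and the estimate follows from the contractive inclusion $L^2(\Omega_0^d;X^*)\hookrightarrow L^2(\Omega_0^d;X)^*$ together with the definition \eqref{Ndnorm} of $N_d$. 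Substituting $a_Ix_I$ for $x_I$ (respectively $a_Ix_I^*$ for $x_I^*$) then gives
\[
\Bigl|\sum_I a_I\langle x_I^*,x_I\rangle\Bigr|\ \le\ N_d\bigl([x_I^*]\bigr)\,N_d\bigl([a_Ix_I]\bigr)
\quad\Bigl(\text{resp.}\ \le\ N_d\bigl([x_I]\bigr)\,N_d\bigl([a_Ix_I^*]\bigr)\Bigr),
\]
so it suffices to show that $N_d([a_Ix_I])\le C\sup_I|a_I|\,N_d([x_I])$ for every finitely supported family $(x_I)$ in $X$ when $X$ has property $(\alpha)$, and the analogous inequality for $X^*$ when $X^*$ has property $(\alpha)$.

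For this multi-indexed contraction principle I would invoke the classical reformulation of property $(\alpha)$ (Pisier \cite{P2}): $X$ has property $(\alpha)$ precisely when, for all countable sets $I,J$, the canonical map $\Rad(I\times J;X)\to\Rad(I;\Rad(J;X))$, $\rho_{(i,j)}\otimes x\mapsto r_i\otimes s_j\otimes x$, is an isomorphism with bounds uniform in $I,J$; one direction of this equivalence is elementary (transport a diagonal multiplier from $\Rad(I\times J;X)$, where it is controlled by the ordinary contraction principle since it acts on a single Rademacher family, to $\Rad(I;\Rad(J;X))$, which is \eqref{alpha}), the other is the substance of Pisier's result. Iterating $d-1$ times — using that $\Rad(I;\,\cdot\,)$ sends a canonical isomorphism to a canonical isomorphism with the same constants — one obtains that the canonical map $\Rad\bigl((\Ndb^*)^d;X\bigr)\to\Rad^d(X)$ is an isomorphism with constant depending only on $d$ and on the property $(\alpha)$ constant of $X$. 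Transporting $(x_I)$ through it, the multiplier $(a_I)$ now acts on a \emph{single} Rademacher family indexed by $(\Ndb^*)^d$, so the ordinary contraction principle gives $N_d([a_Ix_I])\lesssim\sup_I|a_I|\,N_d([x_I])$, as desired.

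The case where $X^*$ has property $(\alpha)$ is identical: identify $\Rad^d(X^*)$ with $\Rad((\Ndb^*)^d;X^*)$ and move the coefficients onto the $x_I^*$ in the second display above. I expect the only point that is not pure bookkeeping to be the appeal to Pisier's reformulation of property $(\alpha)$ as a Fubini-type isomorphism between iterated and product Rademacher spaces; since this is classical and one of its two implications is elementary, I anticipate no real difficulty there, and the proposition amounts to packaging that reformulation with the duality reduction of the first paragraph.
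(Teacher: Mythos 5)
Your proposal is correct and follows essentially the same route as the paper: the duality/Cauchy--Schwarz step over $\Omega_0^d$ reducing $(A_d)$ to a multi-indexed contraction principle, and then the derivation of that principle from Pisier's Fubini-type reformulation of property $(\alpha)$, iterated to identify $\mathrm{Rad}(\mathbb{N}^{*d};X)$ with $\mathrm{Rad}^d(X)$, is exactly the paper's Lemma \ref{lemalpha} combined with its proof of Proposition \ref{prop34}. The only cosmetic difference is that you first state the coefficient-free duality bound and then insert the $a_I$, whereas the paper applies Cauchy--Schwarz directly with the coefficients on one side.
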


\begin{proof}[Proof of Proposition \ref{prop32}] Assume that 
$X$ satisfies property $(A_d)$ for some $d\geq 2$.
We only prove 
(\textbf{P1}), the proof of (\textbf{P2}) being similar. We consider   
commuting Ritt operators $T_1,\ldots,T_d$ such that, for every $k=1,\ldots,d$, $T_k$ has a 
bounded $H^{\infty}(B_{\gamma_k})$ functional calculus. Let $\gamma_k'$ in $(\gamma_k,\frac{\pi}{2})$.
By Section \ref{FC}, and a simple induction argument, it suffices to have an estimate 
$\left\| h(T_1,\ldots,T_d) 
\right\| \lesssim \left\| h \right\|_{\infty,B_{\gamma_1'} \times \cdots \times 
B_{\gamma_d'}}$ for functions $h$ in $H^\infty_{0}(B_{\gamma'_1} \times \cdots 
\times B_{\gamma'_d})$.

For $h\in H^\infty_{0}(B_{\gamma'_1} \times \cdots \times 
B_{\gamma'_d})$, we consider the Franks-McIntosh decomposition given by Theorem 
\ref{FranksMcIntoshplusvariables}. According to 
this statement we may write, for every $(\zeta_1,\ldots,\zeta_d) 
\in \prod_{k=1}^d B_{\gamma_k}$,
\begin{equation}\label{h}
h(\zeta_1,\ldots,\zeta_d) = \sum_{(i_1,\ldots,i_d) \in \mathbb{N}^{*d}} 
a_{i_1,\ldots,i_d} \Psi_{1,i_1}(\zeta_1) \tilde{\Psi}_{1,i_1}(\zeta_1) \cdots 
\Psi_{d,i_d}(\zeta_d) \tilde{\Psi}_{d,i_d}(\zeta_d),
\end{equation}
where $(a_{i_1,\ldots,i_d})$ is a family of complex numbers
satisfying an estimate
\begin{equation} \label{aid}
\left| a_{i_1,\ldots,i_d} \right| \lesssim \left\| h \right\|_{\infty, B_{\gamma_1'} 
\times \cdots \times B_{\gamma_d'}}, \qquad (i_1,\ldots,i_d) \in \mathbb{N}^{*d},
\end{equation}
the functions $\Psi_{k,i_k}$ and  
$\tilde{\Psi}_{k,i_k}$ belong to $H_0^\infty(B_{\gamma_k})$ and they satisfy inequalities
\begin{equation} \label{Psiid}
\sup\left\lbrace \sum_{i_k = 1}^{\infty} \left| \Psi_{k,i_k}(\zeta_k)\right| \, :\, 
\zeta_k \in B_{\gamma_k} \right\rbrace \leq C \quad
\hbox{and}\quad \sup\left\lbrace \sum_{i_k = 1}^{\infty} \left| 
\tilde{\Psi}_{k,i_k}(\zeta_k)\right| : \zeta_k \in B_{\gamma_k} \right\rbrace \leq C
\end{equation}
for every $k=1,\ldots,d$, and for a constant $C>0$ not depending on $h$.

We consider the partial sums in (\ref{h}), defined for every $n\geq 1$ and every
$(\zeta_1,\ldots,\zeta_d)$ in $\prod_{k=1}^d B_{\gamma_k} $ by
\begin{equation} \label{hn}
 h_n(\zeta_1,\ldots,\zeta_d) = \sum_{1 \leq i_1,\ldots,i_d \leq n} a_{i_1,\ldots,i_d} 
\Psi_{1,i_1}(\zeta_1) \tilde{\Psi}_{1,i_1}(\zeta_1)
\cdots\Psi_{d,i_d}(\zeta_d) \tilde{\Psi}_{d,i_d}(\zeta_d).
\end{equation}
The functions $\Psi_{k,i_k}$ and $\tilde{\Psi}_{k,i_k}$ both belong
to $H_0^\infty(B_{\gamma_k})$ hence this implies
\begin{equation} \label{hn}
 h_n(T_1,\ldots,T_d) = \sum_{1 \leq i_1,\ldots,i_d \leq n} a_{i_1,\ldots,i_d} \Psi_{1,i_1}(T_1) 
\tilde{\Psi}_{1,i_1}(T_1)\cdots \Psi_{d,i_d}(T_d) \tilde{\Psi}_{d,i_d}(T_d).
\end{equation}

Let us prove the existence of a constant $K>0$, not depending either on $n$ or $h$,  
such that 
\begin{equation} \label{majhn}
\left\| h_n(T_1,\ldots,T_d) \right\| \leq K \left\| h \right\|_{\infty,B_{\gamma_1'} 
\times \cdots \times B_{\gamma_d'}}.
\end{equation}
We let $x \in X$ and $x^* \in X^*$. Applying (\ref{hn}), we write
\begin{align*}
\langle x^*, h_n(T_1,\ldots,T_d)x \rangle\,=\, \sum_{1 
\leq i_1,\ldots,i_d \leq n} a_{i_1,\ldots,i_d} \bigl\langle \tilde{\Psi}_{1,i_1}(T_1)^*\cdots
\tilde{\Psi}_{d,i_d}(T_d)^* x^*, \Psi_{1,i_1}(T_1)\cdots\Psi_{d,i_d}(T_d) x \bigr\rangle.
\end{align*}
We let 
$$
x_{i_1,\ldots, i_d} = \Psi_{1,i_1}(T_1)\cdots\Psi_{d,i_d}(T_d) x
\quad\hbox{and}\quad
x_{i_1,\ldots,i_d}^* = 
\tilde{\Psi}_{1,i_1}(T_1)^*\cdots\tilde{\Psi}_{d,i_d}(T_d)^* x^*.
$$ 
Using property $(A_d)$ and the estimate (\ref{aid}), we have
\begin{equation*}
\left| \langle x^*, h_n(T_1,\ldots,T_d)x \rangle \right| 
\lesssim \left\| h \right\|_{\infty,B_{\gamma_1'} \times 
\cdots \times B_{\gamma_d'}} N_d\left([x_{i_1,\ldots, i_d}]\right) N_d\left([x_{i_1,\ldots, i_d}^*]\right)
\end{equation*}
Let us momentarilty fix some $(t_1,\ldots,t_d)$ in $\Omega_0^d$. By (\ref{Psiid}) and 
the $H^\infty(B_{\gamma_k})$ functional calculus property of $T_k$ for all $k=1,\ldots,d$,
we have estimates
\begin{align*}
\left\| \sum_{1 \leq i_1,\ldots,i_d \leq n} r_{i_1}(t_1)
\cdots r_{i_d}(t_d) \Psi_{1,i_1}(T_1)\cdots \Psi_{d,i_d}(T_d) x \right\|
& \leq \prod_{k=1}^d \left( \left\| \sum_{i_k=1}^n r_{i_k}(t_k) 
\Psi_{k,i_k}(T_k) \right\| \right) \left\| x \right\|\\
& \lesssim \prod_{k=1}^d \left( \left\| \sum_{i_k=1}^n r_{i_k}(t_k) 
\Psi_{k,i_k} \right\|_{\infty,B_{\gamma_k}} \right) \left\| x \right\|\\
&  \lesssim \prod_{k=1}^d \left( \left\| \sum_{i_k=1}^n \left| 
\Psi_{k,i_k} \right| \right\|_{\infty,B_{\gamma_k}} \right) \left\| x \right\|\\
& \lesssim \left\| x \right\|.
\end{align*}
Now taking the average on $(t_1,\ldots,t_d)$, we deduce that
\begin{equation*}
N_d\left([x_{i_1,\ldots,i_d}]\right) \lesssim \left\| x \right\|.
\end{equation*}
The same method yields a similar estimate $N_d([x_{i_1,\ldots,i_d}^*])\lesssim \left\| x^* \right\|$. 
We deduce an estimate
\begin{equation*}
\left| \langle x^*, h_n(T_1,\ldots,T_d)x \rangle \right| \lesssim 
\left\| h \right\|_{\infty,B_{\gamma_1'} \times \cdots \times 
B_{\gamma_d'}} \left\| x \right\| \left\| x^* \right\|.
\end{equation*}
Next the Hahn-Banach Theorem yields the inequality (\ref{majhn}).

The same estimate holds true when $(T_1,\ldots,T_n)$ is replaced
by $(rT_1,\ldots,rT_n)$ for any $r\in(0,1)$.
Further the above argument also shows that $(h_n)_{n\geq 1}$ is a bounded sequence of the space 
$H^\infty_0(B_{\gamma'_1} \times \cdots \times B_{\gamma_d'})$. Moreover, 
the sequence $(h_n)_{n\geq 1}$ 
converges pointwise to $h$. Hence applying Lebesgue's dominated 
convergence Theorem twice we have
$$
\lim_{n\to\infty} h_n(rT_1,\ldots,rT_n) = h(rT_1,\ldots,rT_n)
$$
for any $r\in(0,1)$ and
$$
\lim_{r\to 1^{-}} h(rT_1,\ldots,rT_n) = h(T_1,\ldots,T_n).
$$
We therefore deduce from  (\ref{majhn}) that
$$
\left\| h(T_1,\ldots,T_d) \right\| \leq K \left\| h \right\|_{\infty,B_{\gamma_1'} 
\times \cdots \times B_{\gamma_d'}},
$$
which concludes the proof. 
\end{proof}

\begin{proof}[Proof of Proposition \ref{prop33}] 
Let $X$ be a Banach lattice and let $d \geq 2$ be an integer. 
For any integer $n\geq 1$, 
for any family of complex numbers $(a_{i_1,\ldots,i_d})_{1 \leq i_1,\ldots,i_d \leq n}$ 
and for any families $(x_{i_1,\ldots,i_d})_{1 \leq i_1,\ldots,i_d \leq n}$ in $X$ and 
$(x_{i_1,\ldots,i_d}^*)_{1 \leq i_1,\ldots,i_d \leq n}$ in $X^*$, we have
\begin{align*}
\biggl\vert \sum a_{i_1,\ldots,i_d} & \langle x_{i_1,\ldots,i_d}^*, x_{i_1,\ldots,i_d} 
\rangle \biggr\vert \\
\leq\, &\sup \left\{ \left|a_{i_1,\ldots,i_d} \right| \right\} \left\| 
\left( \sum \left| x_{i_1,\ldots,i_d}\right|^2 \right)^\frac{1}{2} 
\right\|_X  \left\| \left( \sum \left| x_{i_1,\ldots,i_d}^* 
\right|^2 \right)^\frac{1}{2} \right\|_{X^*},
\end{align*}
where $\left( \sum \left| x_{i_1,\ldots,i_d} \right|^2 \right)^\frac{1}{2}$ 
and $\left( \sum \left| x_{i_1,\ldots,i_d}\right|^2 \right)^\frac{1}{2} $ 
are defined in \cite[Section 1.d]{LT2}. 
This follows from basic properties of Krivine's functional calculus on Banach lattices.

By the $d$-variable Khintchine inequality, there exists a constant $C>0$ 
(not depending on the $x_{i_1,\ldots,i_d}$)
such that we have an inequality 
$$
\left( \sum \left| x_{i_1,\ldots,i_d} \right|^2 \right)^\frac{1}{2}
\,\leq\, C\,\int_{\Omega_0^d}
\Bigl\vert\sum  r_{i_1}(t_1)\ldots r_{i_d}(t_d)
\,  x_{i_1,\ldots,i_d}\Bigr\vert\, d\Pdb^d(t_1,\ldots,t_d)
$$
in $X$. By the triangle inequality, this implies that 
$$
\left\| \left( \sum \left| x_{i_1,\ldots,i_d}\right|^2 \right)^\frac{1}{2} 
\right\|_X  \leq C N_d\left([x_{i_1,\ldots,i_d}]\right).
$$
Likewise, we have 
$$
\left\| \left( \sum \left| x_{i_1,\ldots,i_d}^{*}\right|^2 \right)^\frac{1}{2} 
\right\|_{X^*}  \leq C N_d\left([x_{i_1,\ldots,i_d}^{*}]\right).
$$
Combining these three estimates we obtain that $X$ satisfies property $(A_d)$. 
\end{proof}

Before giving the proof of Proposition \ref{prop34}, we show that any Banach space 
with property $(\alpha)$ verifies a
$d$-variable version of (\ref{alpha}).

\begin{lemma}\label{lemalpha}
Let $X$ be a Banach space with property $(\alpha)$. 
For any integer $d \geq 2$, there exists
a constant $C>0$ such that for any integer 
$n\geq 1$, any family $(a_{i_1,\ldots,i_d})_{1 \leq i_1,\ldots,i_d \leq n}$ 
of complex numbers and any family $(x_{i_1,\ldots,i_d})_{1 \leq i_1,\ldots,i_d \leq n}$
in $X$,
\begin{equation}\label{Alpha-d}
N_d \left([a_{i_1,\ldots,i_d} x_{i_1,\ldots,i_d}]\right) \leq  C 
\sup_{1\leq i_1,\ldots,i_d\leq n}
\left\lbrace \left| a_{i_1,\ldots,i_d} \right| \right\rbrace N_d\left([x_{i_1,\ldots,i_d}]\right). 
\end{equation}
\end{lemma}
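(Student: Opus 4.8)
The plan is to use property $(\alpha)$ to replace the iterated Rademacher average $N_d$ by an \emph{ordinary} (one-fold) Rademacher average indexed by the set of $d$-tuples, since over a single Rademacher family an $\ell^\infty$ multiplier can be pulled out for free by the contraction principle. Throughout, ``$\,\approx\,$'' will mean a two-sided estimate with constants depending only on the property-$(\alpha)$ constant $C$ of $X$ (and on $d$).

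The key step is the following comparison, for $d=2$. Let $(r_i)_i$ and $(\rho_k)_k$ be two independent Rademacher families on $\Omega_0$, and let $(\epsilon_{i,k})_{i,k}$ be a Rademacher family indexed by the pairs. Then for any finitely supported $(y_{i,k})$ in $X$,
\begin{equation*}
\Bignorm{\sum_{i,k} r_i\otimes \rho_k\otimes y_{i,k}}_{\Rad(\Rad(X))} \;\approx\;
\Bignorm{\sum_{i,k}\epsilon_{i,k}\otimes y_{i,k}}_{\Rad(X)}.
\end{equation*}
For one inequality, take $a_{i,k}=\eta_{i,k}$ to be an arbitrary sign pattern in $(\ref{alpha})$, applied to the base vectors $\eta_{i,k}y_{i,k}$: this gives $\bignorm{\sum r_i\rho_k y_{i,k}}_{L^2(\Omega_0^2;X)}\leq C\,\bignorm{\sum \eta_{i,k}r_i\rho_k y_{i,k}}_{L^2(\Omega_0^2;X)}$ for \emph{every} $\eta$. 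Averaging over an i.i.d.\ choice of $\eta$, using Fubini, and noting that for each fixed $(s,t)$ the family $(\eta_{i,k}r_i(s)\rho_k(t))_{i,k}$ (in the variable $\eta$) is i.i.d.\ $\pm1$-valued, the right-hand side becomes $C\,\bignorm{\sum\epsilon_{i,k}y_{i,k}}_{\Rad(X)}$. For the reverse inequality one argues symmetrically: $(\ref{alpha})$ applied to sign matrices says precisely that $\{r_i\otimes\rho_k\}$ is $C$-unconditional in $\Rad(\Rad(X))$, and the same Fubini/averaging argument turns this into $\bignorm{\sum\epsilon_{i,k}y_{i,k}}_{\Rad(X)}\leq C\,\bignorm{\sum r_i\rho_k y_{i,k}}_{L^2(\Omega_0^2;X)}$. (Kahane's inequality is used to pass between $L^1$ and $L^2$ Rademacher averages.)

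Next I would prove by induction on $d$ that, for $X$ with property $(\alpha)$,
\begin{equation*}
N_d\bigl([y_{i_1,\ldots,i_d}]\bigr)\;\approx\;
\Bignorm{\sum_{i_1,\ldots,i_d}\epsilon_{i_1,\ldots,i_d}\otimes y_{i_1,\ldots,i_d}}_{\Rad(X)},
\end{equation*}
where $(\epsilon_{i_1,\ldots,i_d})$ is a Rademacher family indexed by the $d$-tuples. The case $d=1$ is trivial and $d=2$ is the comparison just described. For the step $d-1\to d$, freeze the first randomness: with $w_{i_2,\ldots,i_d}(t_1)=\sum_{i_1}r_{i_1}(t_1)y_{i_1,i_2,\ldots,i_d}$ one has $N_d([y])^2=\int_{\Omega_0} N_{d-1}\bigl([w_{i_2,\ldots,i_d}(t_1)]\bigr)^2\,d\Pdb(t_1)$; applying the induction hypothesis inside the integral and then Fubini identifies $N_d([y])$, up to constants, with a \emph{two-fold} Rademacher average over the index set $\{1,\dots,n\}\times\{(d-1)\text{-tuples}\}$, and the $d=2$ comparison collapses this into a one-fold average over the $d$-tuples.

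With this in hand the lemma follows at once: the comparison gives $N_d\bigl([a_{i_1,\ldots,i_d}x_{i_1,\ldots,i_d}]\bigr)\,\lesssim\,\bignorm{\sum \epsilon_{i_1,\ldots,i_d}\,a_{i_1,\ldots,i_d}x_{i_1,\ldots,i_d}}_{\Rad(X)}$, and since this is now a single Rademacher average, the contraction principle bounds it by $2\sup\{|a_{i_1,\ldots,i_d}|\}\,\bignorm{\sum \epsilon_{i_1,\ldots,i_d}x_{i_1,\ldots,i_d}}_{\Rad(X)}$, which by the comparison again is $\lesssim\sup\{|a_{i_1,\ldots,i_d}|\}\,N_d([x_{i_1,\ldots,i_d}])$. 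I expect the one genuine difficulty to be the two-sided comparison in the $d=2$ case: it is the only place property $(\alpha)$ is used, and it is used essentially — the comparison is false on spaces lacking property $(\alpha)$, consistently with the fact that the conclusion of the lemma must fail there as well.
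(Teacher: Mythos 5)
Your proof is correct and follows essentially the same route as the paper: use property $(\alpha)$ to identify $N_d$ with a single Rademacher average indexed by $d$-tuples (the paper does this by citing Pisier's Remark~2.1 in \cite{P2} and iterating, exactly as in your induction), and then conclude by the contraction principle. The only difference is that you reprove the two-sided $d=2$ comparison by the sign-randomization/Fubini argument instead of quoting \cite{P2}, which is a harmless (and correct) self-contained detour.
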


\begin{proof}
According to \cite[Remark 2.1]{P2}, 
property $(\alpha)$ is equivalent to the fact
that the linear mapping 
$$
\sum_{i,j} r_{i,j} \otimes x_{i,j}  \mapsto  \sum_{i,j} r_i \otimes r_j 
\otimes x_{i,j}
$$
induces an isomorphism from $\text{Rad}({\mathbb{N}^{*2}};X)$ onto 
$\text{Rad}(\text{Rad}(X)) = \text{Rad}^2(X)$. 
This readily implies that for any 
countable sets $I_1,I_2$, we have a natural isomorphism
$$
\text{Rad}(I_1\times I_2;X) \approx \text{Rad}(I_1; \text{Rad}(I_2;X))
$$
when $X$ has property $(\alpha)$.

Under this assumption, we thus have 
\begin{equation*}
\text{Rad}(\text{Rad}(\mathbb{N}^{*2};X)) \approx \text{Rad}(\mathbb{N}^* \times 
\mathbb{N}^{*2};X) = \text{Rad}(\mathbb{N}^{*3};X)
\end{equation*}
and
$$
\text{Rad}(\text{Rad}(\mathbb{N}^{*2};X)) \approx \text{Rad}(\text{Rad}(\text{Rad}(X)))
=\text{Rad}^3(X),
$$
whence a natural isomorphism
\begin{equation*}
\text{Rad}(\mathbb{N}^{*3};X) \approx \text{Rad}^3(X).
\end{equation*}
Proceeding by induction, we obtain that
\begin{equation*}
\text{Rad}(\mathbb{N}^{*d};X) \approx \text{Rad}^d(X).
\end{equation*}
This means that for finite families $(x_{i_1,\ldots,i_d})$ of $X$,
$N_d([x_{i_1,\ldots,i_d}])$ and $\left\| 
\sum r_{i_1,\ldots,i_d} \otimes x_{i_1,\ldots,i_d} \right\|$ are equivalent.
Now recall that by the unconditionality property of Rademacher averages,
\begin{align*}
\Biggnorm{\sum_{1 \leq i_1,\ldots,i_d \leq n} a_{i_1,\ldots,i_d} &
r_{i_1,\ldots,i_d} \otimes   x_{i_1,\ldots,i_d}}_{{\rm Rad}(\Ndb^{*d};X)}\\
\leq & 2 
\,\sup \left\lbrace | a_{i_1,\ldots,i_d} | \right\rbrace 
\Biggnorm{\sum_{1 \leq i_1,\ldots,i_d \leq n} r_{i_1,\ldots,i_d} 
\otimes x_{i_1,\ldots,i_d} }_{{\rm Rad}(\Ndb^{*d};X)}
\end{align*}
for every finite family $(a_{i_1,\ldots,i_d})$ of complex numbers. 
The inequality (\ref{Alpha-d}) follows at once.
\end{proof}

\begin{proof}[Proof of Proposition \ref{prop34}] Assume that $X$ has property 
$(\alpha)$. Let $(x_{i_1,\ldots,i_d})$, $(x_{i_1,\ldots,i_d}^*)$ and $(a_{i_1,\ldots,i_d})$ 
be finite families of $X$, $X^*$ and $\Cdb$, respectively, indexed by 
$(i_1,\ldots,i_d)\in\Ndb^{*d}$.

By the independence of Rademacher variables, we have
\begin{align*} \sum_{i_1,\ldots,i_d} & a_{i_1,\ldots,i_d} \langle x_{i_1,\ldots,i_d}^*, 
x_{i_1,\ldots,i_d} \rangle =\\  & \int_{\Omega_0^d} \left\langle \sum_{i_1,\ldots,i_d}  
r_{i_1}(t_1) \cdots r_{i_d}(t_d) x_{i_1,\ldots,i_d}^*, \sum_{i_1,\ldots,i_d} 
a_{i_1,\ldots,i_d} r_{i_1}(t_1) \cdots r_{i_d}(t_d)x_{i_1,\ldots,i_d} 
\right\rangle d\mathbb{P}^d(t_1,\ldots,t_d).
\end{align*}
By the Cauchy-Schwarz inequality, this implies that
\begin{align*}
 \left| \sum_{i_1,\ldots,i_d} a_{i_1,\ldots,i_d} \langle x_{i_1,\ldots,i_d}^*, 
x_{i_1,\ldots,i_d} \rangle \right|  & \leq \left\| \sum_{i_1,\ldots,i_d} r_{i_1} \otimes 
\cdots \otimes r_{i_d} \otimes x_{i_1,\ldots,i_d}^* \right\|_{{\rm Rad}^d(X^*)} \\ 
 & \times \left\| \sum_{i_1,\ldots,i_d} a_{i_1,\ldots,i_d} r_{i_1} \otimes \cdots \otimes 
r_{i_d} \otimes x_{i_1,\ldots,i_d} \right\|_{{\rm Rad}^d(X)}.
\end{align*} 
By Lemma \ref{lemalpha}, we deduce an estimate
\begin{align*}
\biggl\vert\sum_{i_1,\cdots,i_d} & a_{i_1,\cdots,i_d} \langle x_{i_1,\cdots,i_d}^*, 
x_{i_1,\cdots,i_d} \rangle \biggr\vert  
\leq C \sup \left\{\left| a_{i_1,\cdots,i_d} \right| \right\}\\ 
& \times\left\| \sum_{i_1,\cdots,i_d} r_{i_1} 
\otimes \cdots \otimes r_{i_d} \otimes x_{i_1,\cdots,i_d}^* \right\|_{{\rm Rad}^d(X^*)} 
\left\| \sum_{i_1,\cdots,i_d} 
r_{i_1} \otimes \cdots \otimes r_{i_d}\otimes x_{i_1,\cdots,i_d} \right\|_{{\rm Rad}^d(X)}, 
\end{align*}
which proves $(A_d)$.

The same proof holds true if $X^*$ verifies the property $(\alpha)$. 
\end{proof}

\section{Characterisation by dilation on UMD spaces with property $(\alpha)$}\label{Dilations}
In this section, we give characterisations of $H^\infty$ 
joint functional calculus for
commuting families of either Ritt or sectorial operators
acting on a UMD Banach space $X$ with property $(\alpha)$. 
We pay a special attention to the case when $X$ in an $L^p$-space, 
for $p\in(1,\infty)$.
These characterisations generalise some of 
the main results of \cite{AFLM}.

We refer the reader  to \cite{Bu} and to \cite[Chapter 5]{P3} for 
information on the UMD property.

We first establish a general result about combining dilations of commuting
operators through Bochner spaces. 
Given any $p \in [1,\infty)$, any measure space 
$\Omega$, any Banach space $X$, and any bounded operators
$T\colon L^p(\Omega)\to L^p(\Omega)$ and 
$S\colon X\to X$, consider the operator $T \otimes S$ 
acting on $L^p(\Omega) \otimes X$. If this operator extends to a 
bounded operator on $L^p(\Omega;X)$,
we denote this extension by
\begin{equation*}
T \overline{\otimes} S \colon L^p(\Omega ; X) \longrightarrow L^p(\Omega ; X).
\end{equation*}
By the density of $L^p(\Omega) \otimes X$ in $L^p(\Omega ; X)$, this extension
is necessarily unique.
We recall that if $T$ is a positive operator (meaning that $T(x) \geq 0$ for every $x \geq 0$), then
$T \otimes S$ has a bounded  extension as described above.

\begin{lemma} \label{lem41}
Let $d \geq 2$ be an integer, let $T_1,\ldots,T_d$ be commuting operators on a Banach space $X$ and 
let $p \in [1,\infty)$. Let $1 \leq m \leq d$. Assume that:

\begin{itemize}
\item[(1)] For every $k=1,\ldots,m$, there exist a positive operator $V_k$ on some $L^p(\Omega)$  
and two bounded operators $J_k \colon X \to L^p(\Omega;X)$ and $Q_k \colon L^p(\Omega;X) \to X$ such that
\begin{equation} \label{dil}
T_k^{n_k} = Q_k (V_k \overline{\otimes} I_X)^{n_k} J_k, \qquad n_k \in \mathbb{N}.
\end{equation}

\item[(2)] If $m<d$, there exist a Banach space $Y$, two bounded operators 
$J_{m+1} \colon X \to Y$ and $Q_{m+1} \colon Y \to X $ 
as well as commuting bounded operators $V_{m+1},\ldots,V_d$ on $Y$ such that
\begin{equation} \label{dil2}
T_{m+1}^{n_{m+1}} \cdots T_d^{n_d} = Q_{m+1} V_{m+1}^{n_{m+1}} 
\cdots V_d^{n_d} J_{m+1}, \qquad (n_{m+1},\ldots,n_d) \in \mathbb{N}^{d-m}.
\end{equation} 

\item[(3)] For every $i=1,\ldots,m$ and $j=1,\ldots,d$, we have
\begin{equation} \label{comJT}
J_i T_j = (I_{L^p(\Omega)} \overline{\otimes} T_j) J_i.
\end{equation}
\end{itemize}
Then there exist 
two bounded operators $J \colon X \to L^p(\Omega^m;Y)$ and $Q \colon L^p(\Omega^m;Y) \to X$ such that
\begin{equation} \label{combdil}
T_1^{n_1} \cdots T_d^{n_d} = Q U_1^{n_1}\cdots U_d^{n_d} J, 
\qquad (n_1,\ldots,n_d) \in \mathbb{N}^d,
\end{equation} 
where the operators $U_1,\ldots,U_d\colon L^p(\Omega^m;Y)\to L^p(\Omega^m;Y)$ are given by
\begin{equation} \label{Rk}
U_k = I^{\otimes k-1} ~\overline{\otimes}~ V_k ~\overline{\otimes} 
I^{\otimes m-k} ~\overline{\otimes} ~I_Y, \qquad k=1,\ldots,m;
\end{equation}
\begin{equation} \label{Rk2}
U_k = I^{\otimes m} \overline{\otimes} ~ V_k, \qquad k=m+1,\ldots,d.
\end{equation}
Here $I = I_{L^p(\Omega)}$ and $I^{\otimes l}
=\underbrace{I \overline{\otimes} \cdots \overline{\otimes} I}_{l\ {\rm factors}}$
for every integer $l \geq 1$.
\end{lemma}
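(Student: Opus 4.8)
The plan is to build the intertwining operators $J$ and $Q$ by composing the single-operator dilations one variable at a time, using the Bochner-space extensions to carry earlier dilations past the later operators. First I would set up notation: write $Z_0=X$ and, for $k=1,\ldots,m$, let $Z_k=L^p(\Omega^k;X)$, and let $Z_m'=L^p(\Omega^m;Y)$. The idea is to define
\[
\widehat J_k = I^{\otimes k-1}\overline{\otimes}\,\widetilde J_k\colon L^p(\Omega^{k-1};X)\longrightarrow L^p(\Omega^k;X),
\]
where $\widetilde J_k\colon L^p(\Omega^{k-1};X)\to L^p(\Omega^{k-1};L^p(\Omega;X))=L^p(\Omega^k;X)$ is the ampliation of $J_k$ obtained by tensoring with $I_{L^p(\Omega^{k-1})}$ (which is legitimate since, by hypothesis (1), $J_k$ factors through the positive operator $V_k$ and one may equally take the ampliation directly, or invoke that $J_k$ itself need only be ampliated by an identity, which is automatic). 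Composing, set
\[
J = \bigl(I^{\otimes m-1}\overline{\otimes}\,\widetilde J_m\bigr)\cdots\bigl(I\,\overline{\otimes}\,\widetilde J_2\bigr)\,J_1\colon X\longrightarrow L^p(\Omega^m;X),
\]
then further compose on the left with $I^{\otimes m}\overline{\otimes}J_{m+1}$ (when $m<d$) to land in $L^p(\Omega^m;Y)$; similarly define $Q$ as the composition of the ampliated $Q_k$'s in the reverse order, starting with $I^{\otimes m}\overline{\otimes}Q_{m+1}$ and ending with $Q_1$.

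Next I would verify the dilation identity (\ref{combdil}). The computation proceeds by successively peeling off the operators. Using (\ref{dil}) for $T_1$ gives $T_1^{n_1}=Q_1(V_1\overline{\otimes}I_X)^{n_1}J_1$; to handle $T_1^{n_1}\cdots T_d^{n_d}=Q_1(V_1\overline{\otimes}I_X)^{n_1}J_1 T_2^{n_2}\cdots T_d^{n_d}$ one uses the commutation hypothesis (3), namely $J_1 T_j=(I_{L^p(\Omega)}\overline{\otimes}T_j)J_1$, to move $J_1$ to the right past all the remaining $T_j$'s, turning $T_2^{n_2}\cdots T_d^{n_d}$ into $(I\overline{\otimes}T_2)^{n_2}\cdots(I\overline{\otimes}T_d)^{n_d}$ acting on $L^p(\Omega;X)$. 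One then repeats this inside $L^p(\Omega;X)$ with $T_2$ (whose dilation, ampliated by $I_{L^p(\Omega)}$, is available because ampliation of a bounded-operator identity is free and the hypotheses are stable under this ampliation), again using (3) ampliated to push $J_2$ rightward, and so on through $T_m$. After $m$ steps one is left, inside $L^p(\Omega^m;X)$, with the operators $U_1,\ldots,U_m$ of (\ref{Rk}) already in place on the left and $(I^{\otimes m}\overline{\otimes}T_{m+1})^{n_{m+1}}\cdots(I^{\otimes m}\overline{\otimes}T_d)^{n_d}$ on the right; applying (\ref{dil2}) ampliated by $I^{\otimes m}$ produces $U_{m+1},\ldots,U_d$ as in (\ref{Rk2}) and the remaining $Q$'s, and since all the $U_k$ for $k\le m$ act on coordinates disjoint from the $U_k$ for $k>m$ they commute and can be reordered freely, yielding (\ref{combdil}).

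The step requiring the most care is the bookkeeping around the ampliations and the repeated use of the commutation relation (\ref{comJT}): one must check that each hypothesis — the factorisation (\ref{dil}), the commutation (\ref{comJT}), and the positivity ensuring the Bochner extensions exist — survives ampliation by an identity operator $I_{L^p(\Omega^{k-1})}$, and that $I_{L^p(\Omega)}\overline{\otimes}T_j$ really is the operator governing the shifted tuple after $J_i$ has been moved past. Positivity is preserved under tensoring with an identity, so the extensions $V_k\overline{\otimes}I$, $I\overline{\otimes}V_k$, etc., are all well defined; and the commutation relations ampliate verbatim because $\overline{\otimes}$ is functorial on the relevant maps. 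I expect no genuine obstacle beyond organising these ampliations into a clean induction on $m$ (or, equivalently, a direct $m$-fold composition as above); the boundedness of $J$ and $Q$ is immediate since each is a finite composition of bounded operators (compositions of bounded operators with Bochner ampliations of bounded operators).
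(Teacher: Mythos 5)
Your proposal is correct and takes essentially the same route as the paper: the operators $J$ and $Q$ you construct are exactly the paper's compositions of ampliated $J_k$'s and $Q_k$'s, and your verification — applying (\ref{dil}) one variable at a time, pushing each $J_i$ rightward via the ampliated relation (\ref{comJT}), commuting operators that act on disjoint tensor legs, and finishing with (\ref{dil2}) ampliated by $I^{\otimes m}$ — is the paper's induction carried out front-to-back instead of peeling the last index. The only blemish is notational: your $\widetilde J_k$ is already $I^{\otimes k-1}\overline{\otimes}J_k$, so the further tensoring in $\widehat J_k$ is redundant, and the parenthetical about $J_k$ ``factoring through $V_k$'' is unnecessary since ampliation of any bounded operator by an outer identity on a Bochner $L^p$-space is automatic.
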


\begin{proof}
We define $\widetilde{Q_m} \colon L^p(\Omega^m;X)\to X$ and 
$\widetilde{J_m}\colon X\to L^p(\Omega^m;X)$ by letting
\begin{equation} \label{Q}
\widetilde{Q_m} = Q_1 (I \overline{\otimes} Q_2)(I^{\otimes 2} 
\overline{\otimes} Q_3)\cdots (I^{\otimes m-1} \overline{\otimes} Q_m)
\end{equation} 
and
\begin{equation} \label{J}
\widetilde{J_m} = (I^{\otimes m-1} \overline{\otimes} J_m) 
\cdots (I^{\otimes 2} \overline{\otimes} J_3) (I \overline{\otimes} J_2) J_1.
\end{equation}
Then we define $S_{k,m}\colon L^p(\Omega^m;X)\to L^p(\Omega^m;X)$ by
\begin{equation} \label{Sk}
S_{k,m} = I^{\otimes k-1} ~\overline{\otimes}~ V_k 
~\overline{\otimes} I^{\otimes m-k} ~\overline{\otimes} ~I_X, \qquad 1 \leq k \leq m.
\end{equation}
Our first aim is to prove by induction on $m$ that 
we have the following dilation property,
\begin{equation}\label{premcomb}
T_1^{n_1}\cdots T_m^{n_m} = \widetilde{Q_m} S_{1,m}^{n_1} \cdots S_{m,m}^{n_m} \widetilde{J_m}, 
\qquad (n_1,\ldots,n_m) \in \mathbb{N}^m.
\end{equation}
We will see that this property only depends on the
assumptions (\ref{dil}) and (\ref{comJT}).

The case $m=1$ is trivial. Let $m\geq 2$, suppose  that 
(\ref{Q}), (\ref{J}), (\ref{Sk}) and (\ref{premcomb}) 
hold true for $m - 1$, and let us prove the latter dilation
 property for $m$. 
For every $(n_1,\ldots,n_{m}) \in \mathbb{N}^{m}$, we write 
\begin{equation} \label{Rec}
T_1^{n_1} \cdots T_{m-1}^{n_{m-1}} T_{m}^{n_{m}} 
= \widetilde{Q_{m-1}} S_{1,m-1}^{n_1}\cdots S_{m-1,m-1}^{n_{m-1}} 
\widetilde{J_{m-1}} T_{m}^{n_{m}}.
\end{equation}
We compute the last term $\widetilde{J_{m-1}} T_{m}^{n_{m}}$.
First by (\ref{comJT}), we have
\begin{equation*}
J_1 T_{m}^{n_{m}} = (I \overline{\otimes} T_{m}^{n_{m}})J_1.  
\end{equation*}
Applying (\ref{comJT}) again, we then have
\begin{equation*}
(I \overline{\otimes} J_2) (I \overline{\otimes} T_{m}^{n_{m}})
J_1 = (I^{\otimes 2} \overline{\otimes} T_{m}^{n_{m}}) (I \overline{\otimes} J_2)J_1.
\end{equation*}
Repeating this process with each factor of $\widetilde{J_{m-1}}$, we obtain
\begin{equation}\label{mm}
\widetilde{J_{m-1}} T_{m}^{n_{m}} = (I^{\otimes m-1} \overline{\otimes} T_{m}^{n_{m}}) 
\widetilde{J_{m-1}}.
\end{equation}
Using (\ref{dil}) for $T_{m}$, we see that
\begin{equation*}
I^{\otimes m-1} \overline{\otimes} T_{m}^{n_{m}} = 
(I^{\otimes m-1} \overline{\otimes} Q_{m})(I^{\otimes m-1} \overline{\otimes} V_{m} 
\overline{\otimes} I_X)^{n_{m}} (I^{\otimes m-1} \overline{\otimes} J_{m}).
\end{equation*}
Combining with (\ref{Rec}) and (\ref{mm}), and using the fact that 
$I^{\otimes m-1} \overline{\otimes} V_{m} 
\overline{\otimes} I_X = S_{m,m}$, we deduce that
$$
T_1^{n_1}\cdots T_m^{n_m} =
\widetilde{Q_{m-1}} S_{1,m-1}^{n_1}\cdots S_{m-1,m-1}^{n_{m-1}}
(I^{\otimes m-1}\overline{\otimes} Q_m) S_{m,m}^{n_m} (I^{\otimes m-1}\overline{\otimes}
J_m)\widetilde{J_{m-1}}.
$$
A thorough look at (\ref{Sk}) reveals that for any $k=1,\ldots,m-1$,
\begin{equation*}
S_{k,m-1} (I^{\otimes m-1} \overline{\otimes} Q_{m}) = 
(I^{\otimes m-1} \overline{\otimes} Q_{m}) S_{k,m}.
\end{equation*}
Consequently
\begin{equation*}
T_1^{n_1} \cdots T_{m}^{n_{m}} = 
\widetilde{Q_{m-1}} (I^{\otimes m-1} \overline{\otimes} Q_{m})S_{1,m}^{n_1}
\cdots S_{m-1,m}^{n_{m-1}} S_{m,m}^{n_m} 
(I^{\otimes m-1}\overline{\otimes} J_m)\widetilde{J_{m-1}}.
\end{equation*}
Since 
$$
\widetilde{Q_{m}} =  \widetilde{Q_{m-1}} (I^{\otimes m-1} \overline{\otimes} Q_{m})
\qquad\hbox{and}\qquad
\widetilde{J_{m}}= (I^{m-1}\overline{\otimes} J_m)\widetilde{J_{m-1}}, 
$$
this yields property (\ref{premcomb}).

If $m=d$, the preceding computation proves the lemma. 
Assume now that $m \leq d-1$. It follows from 
(\ref{premcomb}) that
for any $(n_1,\ldots,n_d) \in \mathbb{N}^d$, we have
$$
T_1^{n_1}\cdots T_{d}^{n_d} = \widetilde{Q_m} S_{1,m}^{n_1} 
\cdots S_{m,m}^{n_m} \widetilde{J_m} T_{m+1}^{n_{m+1}} \cdots T_d^{n_d}.
$$
Using (\ref{comJT}) we obtain that for any $k=m+1,\ldots,d$,
\begin{equation} \label{comJT2}
\widetilde{J_m} T_{k}^{n_k} = (I^{\otimes m} \overline{\otimes} T_k)^{n_k} \widetilde{J_m}.
\end{equation}
Applying (\ref{dil2}), we therefore obtain that

\begin{align*}
T_1^{n_1}\cdots T_{d}^{n_d} & = \widetilde{Q_m} S_{1,m}^{n_1} \cdots S_{m,m}^{n_m}
\label{decomp} \\
& \times (I^{\otimes m} \overline{\otimes} Q_{m+1}) (I^{\otimes m} \overline{\otimes} 
V_{m+1})^{n_{m+1}} \cdots (I^{\otimes m} \overline{\otimes} V_{d})^{n_d} 
(I^{\otimes m} \overline{\otimes} J_{m+1}) \widetilde{J_m}.
\end{align*}
Using (\ref{Rk2}), this yields
\begin{equation}\label{factor}
T_1^{n_1}\cdots T_{d}^{n_d}  = \widetilde{Q_m} S_{1,m}^{n_1} \cdots S_{m,m}^{n_m}
(I^{\otimes m} \overline{\otimes} Q_{m+1}) U_{m+1}^{n_{m+1}} \cdots U_{d}^{n_d} 
(I^{\otimes m} \overline{\otimes} J_{m+1}) \widetilde{J_m}.
\end{equation}
Now it follows from 
(\ref{Sk}) that for any $k=1,\ldots,m$,
\begin{equation}\label{Uk}
S_{k,m}^{n_k} (I^{\otimes m} \overline{\otimes} Q_{m+1}) = 
(I^{\otimes m} \overline{\otimes} Q_{m+1}) U_k,
\end{equation}
where the $U_k$ are given by (\ref{Rk}). 
Set 
$$
Q = \widetilde{Q_m} (I^{\otimes m} \overline{\otimes} Q_{m+1})
\qquad\hbox{and}\quad
J = (I^{\otimes m} \overline{\otimes} J_{m+1}) \widetilde{J_m}. 
$$
Then (\ref{combdil}) follows from the factorisation (\ref{factor})
and the relation (\ref{Uk}).
\end{proof}

The following result is a $d$-variable version of \cite[Theorem 4.1]{AFLM}.
We refer the reader to \cite[Chapter 11]{DJT} for the definitions 
and basic properties of spaces with finite cotype.

\begin{theorem} \label{Th42}
Let $X$ be a reflexive Banach space such that $X$ and $X^*$ 
have finite cotype. Let $T_1,\ldots,T_d$ be commuting Ritt operators 
on $X$ such that every $T_k$ has an $H^\infty(B_{\gamma_k})$ functional 
calculus for some $\gamma_k \in (0,\frac{\pi}{2})$. Let $p\in(1,\infty)$. 
Then there exist a measure space $\Omega$,
commuting isometric isomorphisms $U_1,\ldots,U_d$ on $L^p(\Omega;X)$,
and two bounded operators 
$J \colon X \to L^p(\Omega;X)$ and $Q \colon L^p(\Omega;X) \to X$ 
such that
\begin{equation}
T_1^{n_1}\cdots T_d^{n_d} = Q U_1^{n_1} \cdots U_d^{n_d} J, 
\qquad (n_1,\ldots,n_d) \in \mathbb{N}^d.
\end{equation}
\end{theorem}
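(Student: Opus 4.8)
The plan is to obtain Theorem~\ref{Th42} by combining, through Lemma~\ref{lem41}, the one-variable dilation theorem \cite[Theorem~4.1]{AFLM} applied to each $T_k$ separately. Note that the reflexivity and finite cotype hypotheses are used only to invoke that one-variable result; Lemma~\ref{lem41} itself requires nothing of $X$.

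The first step is to record the single-operator dilation in the precise form demanded by Lemma~\ref{lem41}. Fix $k\in\{1,\ldots,d\}$. Since $T_k$ is a Ritt operator on $X$ admitting a bounded $H^\infty(B_{\gamma_k})$ functional calculus and $X$, $X^*$ are reflexive of finite cotype, \cite[Theorem~4.1]{AFLM} provides an isometric dilation of $T_k$ through a Bochner space. Revisiting that construction, one verifies that the dilation may be taken of the following sharper shape: there are a measure space $\Omega_k$, a \emph{positive} isometric isomorphism $V_k$ of the scalar space $L^p(\Omega_k)$ --- concretely a shift, i.e.\ the composition operator of an invertible measure-preserving transformation of $\Omega_k$ --- and bounded operators $J_k\colon X\to L^p(\Omega_k;X)$, $Q_k\colon L^p(\Omega_k;X)\to X$ such that
\begin{equation*}
T_k^{\,n}=Q_k\,(V_k\overline{\otimes}I_X)^{n}\,J_k,\qquad n\in\Ndb .
\end{equation*}
Moreover, $J_k$ and $Q_k$ are assembled out of the $H^\infty$-functional calculus of $T_k$ (operators $\psi(T_k)$ with $\psi\in H_0^\infty(B_{\gamma_k})$, together with the usual limiting arguments), so that they are natural with respect to the commutant of $T_k$: whenever $B\in B(X)$ satisfies $BT_k=T_kB$, one has $J_kB=(I_{L^p(\Omega_k)}\overline{\otimes}B)J_k$ and $Q_k(I_{L^p(\Omega_k)}\overline{\otimes}B)=BQ_k$.

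The second step is to feed this into Lemma~\ref{lem41} with $m=d$ and $Y=X$, so that its hypothesis~(2) is vacuous. Hypothesis~(1) is the dilation just displayed, while hypothesis~(3), namely $J_iT_j=(I_{L^p(\Omega_i)}\overline{\otimes}T_j)J_i$ for all $i,j$, follows by applying the naturality of $J_i$ to $B=T_j$, which commutes with $T_i$ by assumption. (Lemma~\ref{lem41} is stated with a single fixed measure space $\Omega$, but its proof is unaffected if the space in hypothesis~(1) depends on $k$; one simply replaces $\Omega^m$ by $\Omega_1\times\cdots\times\Omega_d$ throughout.) The lemma then yields bounded operators $J\colon X\to L^p(\Omega_1\times\cdots\times\Omega_d;X)$ and $Q\colon L^p(\Omega_1\times\cdots\times\Omega_d;X)\to X$ and operators $U_1,\ldots,U_d$ of the form~(\ref{Rk}) such that $T_1^{n_1}\cdots T_d^{n_d}=Q\,U_1^{n_1}\cdots U_d^{n_d}\,J$ for all $(n_1,\ldots,n_d)\in\Ndb^d$. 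Each $U_k$ acts as $V_k$ in its $k$-th coordinate and as the identity in the others; hence the $U_k$ commute (distinct ones act on disjoint tensor slots), and each $U_k$ is an isometric isomorphism, its inverse being obtained by tensoring the composition operator $V_k^{-1}$ with identity operators. Taking $\Omega=\Omega_1\times\cdots\times\Omega_d$ completes the argument.

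The main obstacle is exactly the refinement performed in the first step. As stated, \cite[Theorem~4.1]{AFLM} only asserts the existence of \emph{some} isometric dilation through a Bochner space, whereas Lemma~\ref{lem41} requires the dilating operator to have the ``tensor'' form $V_k\overline{\otimes}I_X$ with $V_k$ positive on a scalar $L^p$-space, and it requires $J_k$ and $Q_k$ to intertwine with the entire commutant of $T_k$. Securing both features means re-examining each ingredient of the \cite{AFLM} construction --- the square-function embedding produced by the bounded $H^\infty$ calculus of $T_k$, and the accompanying factorisation --- and checking that it depends on $T_k$ only through its functional calculus, hence is functorial in $T_k$ and in particular commutes with each $T_j$. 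Once this bookkeeping is done, Lemma~\ref{lem41} supplies the combinatorial content of the joint dilation automatically.
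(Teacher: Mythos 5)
Your proposal is correct and follows essentially the same route as the paper: the paper also proves Theorem \ref{Th42} by recalling the explicit construction of \cite[Theorem 4.1]{AFLM} (the map $J_k$ built from the ergodic decomposition ${\rm Ker}(I-T_k)\oplus\overline{{\rm Ran}(I-T_k)}$ and the square-function series $\sum_n r_n\otimes T_k^n(I-T_k)^{1/2}(I+T_k)x_1$), verifying the intertwining relation (\ref{comJT}) exactly as you indicate (since $T_j$ commutes with $T_k$ it preserves the ergodic decomposition and passes through the series), and then invoking Lemma \ref{lem41} with $m=d$. The only simplification in the paper is that the AFLM construction produces a single measure space $\Omega$ and a single isometry $U$ independent of $k$, so the minor generalisation of Lemma \ref{lem41} to distinct spaces $\Omega_k$ that you mention is not needed.
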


\begin{proof} 
We shall apply Lemma \ref{lem41} in the case $m=d$,
using the construction devised
in the proof of \cite[Theorem 4.1]{AFLM}. 

We recall this construction. Following Section \ref{Automatic}, we let
$(r_n)_{n\in\footnotesize{\Zdb}}$ be an independent sequence of Rademacher variables 
on some probability space $\Omega_0$.

For any $k=1,\ldots,d$, recall the ergodic
decomposition
$X = \text{Ker}(I-T_k) \oplus \overline{\text{Ran}(I-T_k)}.$
It is shown in \cite{AFLM} that the operator
\begin{equation}
J_k \colon \begin{array}{ccl} X = \text{Ker}(I-T_k) \oplus 
\overline{\text{Ran}(I-T_k)} & \to & X \oplus_p L^p(\Omega_0;X)
\\ x_0+x_1 & \mapsto & 
\bigl(x_0, \sum_{n=1}^{\infty} r_n \otimes T_k^n(I-T_k)^\frac{1}{2}(I+T_k) (x_1)\bigr) 
\end{array}
\end{equation}
is well-defined and bounded, under the assumption that 
$T_k$ has an $H^\infty(B_{\gamma_k})$ functional calculus 
for some $\gamma_k \in (0,\frac{\pi}{2})$.
More precisely, the series 
$$
\sum_{n=1}^{\infty} r_n \otimes T_k^n(I-T_k)^\frac{1}{2}(I+T_k) (x_1)
$$
converges in $L^p(\Omega_0;X)$ for any $x_1\in X$ and the norm of the resulting
sum is $\lesssim\norm{x_1}$. 

Define $\Omega$ as the disjoint union 
of $\Omega_0$ and a singleton, so that  
$$
X \oplus_p L^p(\Omega_0;X) \simeq L^p(\Omega;X).
$$ 
It also follows from the proof 
of \cite[Theorem 4.1]{AFLM} that there exist an isometric isomorphism
$U\colon L^p(\Omega)\to L^p(\Omega)$ (which does not depend on $k$)
and operators $Q_k\colon L^p(\Omega;X)\to X$ such that
$$
T_k^{n_k} = Q_k(U\overline{\otimes} I_X)^{n_k}J_k,\qquad n_k\in\Ndb.
$$
We set $V_k=U$ for any $k=1,\ldots,d$, so that $T_1,\ldots, T_d$ satisfy 
(\ref{dil}).

Let us show that $T_1,\ldots, T_d$ also satisfy 
(\ref{comJT}). Consider arbitrary $i,j$ in $\{1,\ldots,d\}$, and
an element $x_0+x_1 \in X = 
\text{Ker}(I-T_i) \oplus \overline{\text{Ran}(I-T_i)}$. 
Since $T_i$ and $T_j$ commute, $T_j(x_0)$ belongs to $\text{Ker}(T_i)$.
Consequently, 
\begin{align*}
J_i(T_j(x_0+x_1)) 
& = \bigl(T_j(x_0), \sum_{n=1}^{\infty} r_n 
\otimes T_i^n(I-T_i)^\frac{1}{2}(I+T_i) T_j (x_1)\bigr)\\
& = \bigl(T_j(x_0), \sum_{n=1}^{\infty} r_n \otimes T_j 
T_i^n(I-T_i)^\frac{1}{2}(I+T_i)(x_1)\bigr)\\
& = \Bigl(T_j(x_0), \left(I_{L^p(\Omega_0)} 
\overline{\otimes} T_j \right) 
\Bigl(\sum_{n=1}^{\infty} r_n 
\otimes (T_i^n(I-T_i)^\frac{1}{2}(I+T_i)(x_1)\Bigr)\Bigr)\\
& = \left(I_{L^p(\Omega)} \overline{\otimes} T_j\right) J_i (x_0+x_1).
\end{align*}
This proves (\ref{comJT}).

Applying Lemma \ref{lem41}, we deduce the existence of  
two bounded operators $Q \colon L^p(\Omega^d;X) \to X$ and 
$J \colon X \to L^p(\Omega^d;X)$ such that
\begin{equation*}
T_1^{n_1} \cdots T_d^{n_d} = Q U_1^{n_1} \cdots U_d^{n_d} J, 
\qquad (n_1,\ldots,n_d) \in \mathbb{N}^d,
\end{equation*}
where $U_1,\ldots,U_d$ are given by 
\begin{equation*}
U_k = I^{\otimes k-1} ~\overline{\otimes}~ U 
~\overline{\otimes} I^{\otimes d-k-1} ~\overline{\otimes} ~I_X.
\end{equation*}
Since $U$ is an isometric isomorphism of $L^p(\Omega)$, it is clear that each
$U_k$ is an isometric isomorphism as well. 
\end{proof}

We are now in position
to extend \cite[Theorem 5.1]{AFLM} to $d$-tuples of Ritt operators.

\begin{theorem} \label{Th43}
Let $X$ be a UMD Banach space with property 
$(\alpha)$  
and let $d\geq 1$ be an integer. 
Let $T_1,\ldots,T_d$ be commuting Ritt operators on $X$ and 
let $p\in(1,\infty)$. The following two conditions are equivalent.
\begin{itemize}
\item[(1)] $(T_1,\ldots,T_d)$ admits 
an $H^\infty(B_{\gamma_1} \times \cdots 
\times B_{\gamma_d})$ joint functional calculus 
for some $\gamma_k \in (0,\frac{\pi}{2})$, $k=1,\ldots,d$.
\item[(2)] There exist a measure space $\Omega$, commuting contractive 
Ritt operators $R_1,\ldots,R_d$ on $L^p(\Omega;X)$ such that every $R_k$ 
admits an $H^\infty(B_{\gamma_k'})$ functional calculus for some 
$\gamma_k' \in (0,\frac{\pi}{2})$, $k=1,\ldots,d$,
as well as two bounded operators $J \colon X \to L^p(\Omega;X)$
and $Q \colon L^p(\Omega;X) \to X$ such that   
\begin{equation}\label{Th43-Dil}
T_1^{n_1} \cdots T_d^{n_d} = Q R_1^{n_1} \cdots R_d^{n_d} J, 
\qquad (n_1,\ldots,n_d) \in \mathbb{N}^d.
\end{equation}
\end{itemize}
\end{theorem}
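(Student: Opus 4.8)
The plan is to prove the two implications separately, both relying on the tools assembled above.

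\textbf{Proof of $(2)\Rightarrow(1)$.} Assume the dilation \eqref{Th43-Dil} with commuting contractive Ritt operators $R_1,\ldots,R_d$ on $L^p(\Omega;X)$, each admitting an $H^\infty(B_{\gamma'_k})$ functional calculus. Since $L^p(\Omega;X)$ is again a UMD Banach space with property $(\alpha)$ (both properties pass to $L^p$-Bochner spaces over a UMD space with property $(\alpha)$), Theorem \ref{Th32}, property \textbf{(P1)}, applies to the $d$-tuple $(R_1,\ldots,R_d)$: for any $\gamma''_k\in(\gamma'_k,\frac{\pi}{2})$, $(R_1,\ldots,R_d)$ admits an $H^\infty(B_{\gamma''_1}\times\cdots\times B_{\gamma''_d})$ joint functional calculus. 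Now, by Proposition \ref{Rittpolynome} it suffices to bound $\|\phi(T_1,\ldots,T_d)\|$ for polynomials $\phi\in\P_d$. For such $\phi$, applying \eqref{Th43-Dil} monomial by monomial and using linearity gives $\phi(T_1,\ldots,T_d)=Q\,\phi(R_1,\ldots,R_d)\,J$, hence
\begin{equation*}
\|\phi(T_1,\ldots,T_d)\|\leq \|Q\|\,\|J\|\,\|\phi(R_1,\ldots,R_d)\|\leq K\,\|Q\|\,\|J\|\,\|\phi\|_{\infty,B_{\gamma''_1}\times\cdots\times B_{\gamma''_d}}.
\end{equation*}
This establishes an estimate \eqref{calcpolRitt} with $\gamma_k=\gamma''_k$, so by Proposition \ref{Rittpolynome}, $(T_1,\ldots,T_d)$ admits an $H^\infty(B_{\gamma''_1}\times\cdots\times B_{\gamma''_d})$ joint functional calculus, which is $(1)$.

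\textbf{Proof of $(1)\Rightarrow(2)$.} Assume $(T_1,\ldots,T_d)$ has an $H^\infty$ joint functional calculus. Then, as recorded in Section \ref{FC}, each $T_k$ individually has an $H^\infty(B_{\gamma_k})$ functional calculus. Since $X$ is UMD with property $(\alpha)$, it is reflexive and both $X$ and $X^*$ have finite cotype (property $(\alpha)$ forces finite cotype, and UMD passes to the dual). Hence Theorem \ref{Th42} applies: for any $p\in(1,\infty)$ there exist a measure space $\Omega$, commuting isometric isomorphisms $U_1,\ldots,U_d$ on $L^p(\Omega;X)$, and bounded $J\colon X\to L^p(\Omega;X)$, $Q\colon L^p(\Omega;X)\to X$ with $T_1^{n_1}\cdots T_d^{n_d}=Q\,U_1^{n_1}\cdots U_d^{n_d}\,J$. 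The operators $U_k$ are invertible isometries, hence trivially contractive Ritt operators (their spectrum lies on $\mathbb{T}$, but the Ritt/sectorial estimates are automatic for invertible isometries on UMD spaces — more precisely, being isometric isomorphisms they are polynomially bounded and one checks directly the resolvent bound $\|(\lambda-1)R(\lambda,U_k)\|\leq K$ for $|\lambda|>1$). The remaining point is that each $U_k$ admits an $H^\infty(B_{\gamma'_k})$ functional calculus for some $\gamma'_k$; this follows because an invertible isometry on a Banach space, regarded through its spectral picture, has a bounded $H^\infty$ calculus over any Stolz domain — indeed for $U=V\overline{\otimes}I_X$ with $V$ an invertible isometry on $L^p(\Omega)$ one transfers the scalar $H^\infty(\mathbb{D})\supset H^\infty(B_{\gamma'})$ calculus of $V$ on $L^p$ via the UMD/$(\alpha)$ structure exactly as in the construction of \cite{AFLM}. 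Setting $R_k=U_k$ then gives $(2)$.

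\textbf{Main obstacle.} The routine direction is $(2)\Rightarrow(1)$, which is a clean application of \textbf{(P1)} together with Proposition \ref{Rittpolynome}. The delicate point in $(1)\Rightarrow(2)$ is verifying that the dilating operators $U_k$ produced by Theorem \ref{Th42} are not merely isometric isomorphisms but genuine \emph{contractive Ritt operators with bounded $H^\infty$ functional calculus}; this requires recalling precisely how $U$ acts in the construction of \cite{AFLM} (it is, up to identification, a shift-type operator on a $\Zdb$-indexed Rademacher sum, modelled on multiplication by a unimodular function) and checking the resolvent/square-function estimates, using that $L^p(\Omega;X)$ inherits UMD and property $(\alpha)$. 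Care is also needed to align the various angles $\gamma_k,\gamma'_k,\gamma''_k$ across the two implications, but this only costs an $\varepsilon$ of openness at each step and does not affect the equivalence as stated.
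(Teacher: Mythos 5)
Your proof of $(2)\Rightarrow(1)$ is correct and is essentially the paper's argument: dilate polynomials, use that $L^p(\Omega;X)$ inherits property $(\alpha)$, apply Theorem \ref{Th32} to $(R_1,\ldots,R_d)$ and conclude with Proposition \ref{Rittpolynome}.

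The direction $(1)\Rightarrow(2)$, however, has a genuine gap. You take the isometric isomorphisms $U_1,\ldots,U_d$ produced by Theorem \ref{Th42} and set $R_k=U_k$, asserting that invertible isometries are ``trivially contractive Ritt operators'' with a bounded $H^\infty(B_{\gamma_k'})$ calculus. This is false: a Ritt operator must satisfy $\sigma(T)\subset\mathbb{D}\cup\{1\}$ (indeed $\sigma(T)\subset\overline{B_\alpha}$ for some $\alpha<\frac{\pi}{2}$), whereas an invertible isometry typically has spectrum spread over the whole unit circle (think of a bilateral shift or a translation group at time one — and the $U$ from the construction of \cite{AFLM} is exactly of this shift type). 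For such an operator the resolvent bound $\|(\lambda-1)R(\lambda,U)\|\leq K$ for $|\lambda|>1$ fails near spectral points of $\mathbb{T}\setminus\{1\}$, and an $H^\infty$ calculus on a Stolz domain is not even meaningful. So the dilation of Theorem \ref{Th42} cannot be used as is. The paper's proof circumvents this with the fractional-power trick of \cite{AFLM}: since each $T_k$ has an $H^\infty$ calculus, by \cite[Proposition 3.2]{AFLM} there is $a>1$ such that each $(T_k)_a=I_X-(I_X-T_k)^a$ still has an $H^\infty$ calculus; one applies Theorem \ref{Th42} to the tuple $((T_1)_a,\ldots,(T_d)_a)$, obtaining isometric dilating operators $U_k$, and then sets $b=\frac1a$ and $R_k=(U_k)_b=I-(I-U_k)^b$. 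Arguing as in \cite[Theorem 5.1]{AFLM} (and \cite{Fa}) one recovers $T_1^{n_1}\cdots T_d^{n_d}=Q\,R_1^{n_1}\cdots R_d^{n_d}\,J$, and by \cite[Theorems 3.1 and 3.3]{AFLM}, using that $L^p(\Omega;X)$ is UMD, each $R_k$ is a contractive Ritt operator with an $H^\infty(B_{\gamma_k'})$ functional calculus. Without this step (or some substitute producing Ritt dilating operators rather than isometries), your argument does not yield condition $(2)$.
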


\begin{proof} The implication 
``$(2) \Rightarrow (1)$" is easy. Indeed  (\ref{Th43-Dil}) implies
that for any $\phi\in\P_d$ (the algebra of complex 
polynomials in $d$ variables), we have
\begin{equation*}
\phi(T_1,\ldots,T_d) = Q \phi(R_1,\ldots,R_d) J,
\end{equation*}
and hence
\begin{equation*}
\left\|\phi(T_1,\ldots,T_d)  \right\| \leq \left\| 
Q \right\| \left\| J \right\| \left\| \phi(R_1,\ldots,R_d) 
\right\|.
\end{equation*}
By assumption each $R_k$ has an $H^\infty(B_{\gamma_k'})$ functional calculus, with
$\gamma_k'\in(0,\frac{\pi}{2})$. Since $X$ 
has property $(\alpha)$, the Bochner space $L^p(\Omega;X)$ has property
$(\alpha)$ as well. It therefore follows from Theorem \ref{Th32} that the $d$-tuple 
$(R_1,\ldots,R_d)$ has an
$H^\infty(B_{\gamma_1} \times \cdots 
\times B_{\gamma_d})$ joint functional calculus 
for some $\gamma_k \in (0,\frac{\pi}{2})$.
Applying Proposition \ref{Rittpolynome}, we deduce that $(T_1,\ldots,T_d)$ 
also has an $H^\infty(B_{\gamma_1} \times \cdots 
\times B_{\gamma_d})$ joint functional calculus.

To prove the converse (and main) implication ``$(1) \Rightarrow (2)$", we
assume (1). Every UMD Banach space 
is reflexive and has finite cotype, so we can apply Theorem \ref{Th42} on $X$.

As in \cite[Section 3]{AFLM}, set 
$$
(T_k)_a =I_X - (I_X- T_k)^{a},\qquad a>0.
$$
Since $(T_1,\ldots,T_d)$ has an $H^\infty$ joint functional calculus, 
every $T_k$ has an $H^\infty$ functional calculus.
Hence according to \cite[Proposition 3.2]{AFLM}, there exists $a>1$  
such that every $(T_k)_a$ has an $H^\infty$ functional calculus. 
Applying Theorem \ref{Th42}, we deduce a dilation property
\begin{equation*}
((T_1)_a)^{n_1} \cdots ((T_d)_a)^{n_d}= Q U_1^{n_1} 
\cdots U_d^{n_d} J, \qquad (n_1,\ldots,n_d) \in \mathbb{N}^d,
\end{equation*}
where $J \colon X \to L^p(\Omega;X)$ and $Q\colon L^p(\Omega;X) \to X$ 
are bounded operators and $U_1,\ldots,U_d$ are isometric isomorphisms
on $L^p(\Omega;X)$ .

Let $b = \frac{1}{a}$, so that $0<b<1$. 
Arguing as in the proof of \cite[Theorem 5.1]{AFLM} (see also 
\cite{Fa}, where this argument appeared for the first time), 
we derive that
\begin{equation*}
T_1^{n_1} \cdots T_d^{n_d}= Q ((U_1)_b)^{n_1} \cdots ((U_d)_b)^{n_d} J, 
\qquad (n_1,\ldots,n_d) \in \mathbb{N}^d.
\end{equation*}
We let $R_k = (U_k)_b$ for every $k=1,\ldots,d$. 
By \cite[Theorem 3.1 and 3.3]{AFLM}, and the assumption that $X$
is a UMD Banach space, every $R_k$ is a contractive 
Ritt operator having an $H^\infty(B_{\gamma_k'})$ functional 
calculus for some $\gamma_k' \in (0,\frac{\pi}{2})$, which proves (2).
\end{proof}

\begin{remark}\label{Positive}
It follows from the proof of \cite[Theorem 4.1]{AFLM} that the
isometric isomorphism $U\colon L^p(\Omega)\to L^p(\Omega)$ appearing
in the proof of Theorem \ref{Th42} is positive. This implies that 
if $X$ is an ordered Banach space, then the isometric isomorphisms 
$U_1,\ldots,U_d\colon L^p(\Omega;X)\to L^p(\Omega;X)$ in the latter theorem
are positive operators.
It therefore follows from \cite[Theorem 3.1 (c)]{AFLM} that 
if $X$ is an ordered Banach space in Theorem \ref{Th43}, then the 
contractive Ritt operators $R_1,\ldots,R_d\colon L^p(\Omega;X)\to L^p(\Omega;X)$ 
in this theorem
are positive operators.
\end{remark}

We note that any UMD Banach lattice has property $(\alpha)$.
Hence any UMD Banach lattice satisfies Theorem \ref{Th43}.

We also observe 
that thanks to Theorem \ref{Th32}, assumption (1) of  Theorem \ref{Th43}
is equivalent to the property that each $T_k$ admits an $H^\infty(B_{\gamma_k})$ functional calculus
for some $\gamma_k\in(0,\frac{\pi}{2})$.

We now give a specific result on $L^p$-spaces. This is a $d$-variable version 
of \cite[Theorem 5.2]{AFLM}.

\begin{theorem} \label{Th44}
Let $\Sigma$ be a measure space and 
let $p \in (1,\infty)$. Let $T_1,\ldots,T_d$ be commuting Ritt operators 
on $L^p(\Sigma)$. The following two conditions are equivalent.
\begin{itemize} 
\item[(1)] $(T_1,\ldots,T_d)$ admits an 
$H^\infty(B_{\gamma_1} \times \cdots \times B_{\gamma_d})$ 
joint functional calculus for some $\gamma_k \in (0,\frac{\pi}{2})$, $k=1,\ldots,d$.
\item[(2)] There exist a measure space $\Omega$, commuting positive 
contractive Ritt operators $R_1,\ldots,R_d$ on $L^p(\Omega)$, 
and two bounded operators $J \colon 
L^p(\Sigma) \to L^p(\Omega)$ and $Q \colon L^p(\Omega) \to L^p(\Sigma)$
such that   
\begin{equation*}
T_1^{n_1} \cdots T_d^{n_d} = Q R_1^{n_1} \cdots R_d^{n_d} J, 
\qquad (n_1,\ldots,n_d) \in \mathbb{N}^d.
\end{equation*}
\end{itemize}
\end{theorem}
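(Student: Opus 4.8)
The plan is to deduce Theorem~\ref{Th44} from Theorem~\ref{Th43} together with Theorem~\ref{Th42} and Remark~\ref{Positive}, by passing from a dilation on the \emph{vector-valued} space $L^p(\Omega;L^p(\Sigma))$ to a dilation on a \emph{scalar} $L^p$-space. First I would dispose of the easy implication ``$(2)\Rightarrow(1)$'': given commuting positive contractive Ritt operators $R_1,\ldots,R_d$ on $L^p(\Omega)$, each $R_k$ has an $H^\infty$ functional calculus by the single-operator result \cite[Theorem 5.2]{AFLM} (a positive contraction which is Ritt on $L^p$ has bounded $H^\infty$ functional calculus), and since $L^p(\Omega)$ has property $(\alpha)$, Theorem~\ref{Th32} gives that $(R_1,\ldots,R_d)$ has a joint $H^\infty$ functional calculus; then the factorisation $T_1^{n_1}\cdots T_d^{n_d}=QR_1^{n_1}\cdots R_d^{n_d}J$ together with Proposition~\ref{Rittpolynome} transfers the polynomial estimate to $(T_1,\ldots,T_d)$, giving (1).

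For the main implication ``$(1)\Rightarrow(2)$'', the key observation is that on the scalar space $X=L^p(\Sigma)$, Theorem~\ref{Th42} produces a dilation through $L^p(\Omega^d;L^p(\Sigma))$, and this Bochner space is \emph{itself an $L^p$-space}: $L^p(\Omega^d;L^p(\Sigma))\simeq L^p(\Omega^d\times\Sigma)$ isometrically. So the proof should run as follows. Starting from (1), each $T_k$ has an $H^\infty(B_{\gamma_k})$ functional calculus; as in the proof of Theorem~\ref{Th43}, invoke \cite[Proposition 3.2]{AFLM} to find $a>1$ such that every $(T_k)_a=I-(I-T_k)^a$ still has an $H^\infty$ functional calculus, and apply Theorem~\ref{Th42} to the $d$-tuple $((T_1)_a,\ldots,(T_d)_a)$ on $L^p(\Sigma)$ to obtain isometric isomorphisms $U_1,\ldots,U_d$ on $L^p(\Omega;L^p(\Sigma))=L^p(\Omega\times\Sigma)$ (or $L^p(\Omega^d\times\Sigma)$, depending on the precise form of Lemma~\ref{lem41}) and bounded $J,Q$ with $((T_1)_a)^{n_1}\cdots((T_d)_a)^{n_d}=QU_1^{n_1}\cdots U_d^{n_d}J$. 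By Remark~\ref{Positive}, since $L^p(\Sigma)$ is an ordered Banach space, the $U_k$ are positive isometric isomorphisms. Now put $R_k=(U_k)_b$ with $b=1/a\in(0,1)$; the Fackler-type argument reproduced in the proof of Theorem~\ref{Th43} yields $T_1^{n_1}\cdots T_d^{n_d}=QR_1^{n_1}\cdots R_d^{n_d}J$, and \cite[Theorem 3.1]{AFLM} shows each $R_k$ is a positive contractive Ritt operator on the scalar $L^p$-space $L^p(\Omega^d\times\Sigma)$. Relabelling $\Omega^d\times\Sigma$ as the new measure space $\Omega$ gives exactly (2).

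The step I expect to be the main (and essentially only) obstacle is bookkeeping rather than conceptual: one must check that the operators $R_k$ built on the vector-valued space $L^p(\Omega^d;L^p(\Sigma))$ are \emph{positive} as operators on the identified scalar space $L^p(\Omega^d\times\Sigma)$. Positivity of the $U_k$ as operators on $L^p(\Omega^d;L^p(\Sigma))$ comes from Remark~\ref{Positive}, and positivity is preserved under the identification with $L^p(\Omega^d\times\Sigma)$ and, by \cite[Theorem 3.1(c)]{AFLM}, under the map $U\mapsto U_b$; so this amounts to assembling facts already available in the excerpt and in \cite{AFLM}. A secondary point requiring care is that the contractivity of $R_k$ on the scalar $L^p$-space follows from $U_k$ being an isometric isomorphism together with the contractivity statement in \cite[Theorem 3.3]{AFLM}; no UMD hypothesis is needed here because $L^p$ is itself UMD for $p\in(1,\infty)$. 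I would also remark (as the paper does after Theorem~\ref{Th43}) that by Theorem~\ref{Th32} condition (1) is equivalent to each $T_k$ individually having an $H^\infty(B_{\gamma_k})$ functional calculus, so the hypothesis can be stated in either form.
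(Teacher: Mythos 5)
Your proposal is correct and follows essentially the same route as the paper: apply Theorem \ref{Th43} with $X=L^p(\Sigma)$, use the identification of $L^p(\Omega;L^p(\Sigma))$ with a scalar $L^p$-space together with Remark \ref{Positive} to get positivity of the $R_k$ for ``$(1)\Rightarrow(2)$'', and for the converse combine the dilation with the automaticity result (Theorem \ref{Th32}) and Proposition \ref{Rittpolynome}. The only cosmetic difference is that for the fact that a positive contractive Ritt operator on an $L^p$-space has an $H^\infty$ functional calculus the paper cites \cite[Theorem 3.3]{LMX} directly, whereas you derive it from \cite[Theorem 5.2]{AFLM}; both are fine.
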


\begin{proof} 
We apply Theorem \ref{Th43} above with $X = L^p(\Sigma)$, which is a UMD Banach space
with property $(\alpha)$. We note that for any measure space $\Omega$,
$L^p(\Omega;L^p(\Sigma))$ is an $L^p$-space. 
Further conditions (1) in Theorem \ref{Th43} and Theorem \ref{Th44} 
are identical. 

Assuming (1) and applying Theorem \ref{Th43} together
with Remark \ref{Positive}, we obtain condition (2) in
Theorem \ref{Th44}.

The converse implication follows from 
Theorem \ref{Th43} and the fact that 
any positive contractive Ritt operator on an
$L^p$-space  has an $H^\infty(B_{\gamma})$ 
functional calculus for some $\gamma \in (0,\frac{\pi}{2})$.
This result is proved in \cite[Theorem 3.3]{LMX}.
\end{proof}

A celebrated theorem of Akcoglu and Sucheston (see \cite{Akc})
asserts that if $T\colon L^p(\Sigma)\to L^p(\Sigma)$ is 
a positive contraction, with $p\in(1,\infty)$, then there 
exist a measure space $\Sigma'$, an isometric isomorphism
$V\colon L^p(\Sigma')\to L^p(\Sigma')$
and two contractions $J\colon L^p(\Sigma)\to L^p(\Sigma')$
and $Q\colon L^p(\Sigma')\to
L^p(\Sigma)$ such that $T^n=QV^nJ$ for any $n\in\Ndb$.
It is an open problem whether the Akcoglu-Sucheston Theorem
extends to pairs. The question reads as follows.

Consider a commuting
pair $(T_1,T_2)$ of positive contractions on $L^p(\Sigma)$. Does there
exist a commuting pair $(V_1,V_2)$ of 
isometric isomorphisms acting on some $L^p(\Sigma')$, as well
as bounded (or even contractive) operators $J\colon L^p(\Sigma)\to L^p(\Sigma')$
and $Q\colon L^p(\Sigma')\to L^p(\Sigma)$ such that 
$T_1^{n_1}T_2^{n_2}=QV_1^{n_1}V_2^{n_2}J$ for any $(n_1,n_2)
\in\Ndb^2$?

The next result shows 
that the answer is positive if either $T_1$ or
$T_2$ is a Ritt operator. More generally we have the following.

\begin{theorem} \label{Th45}
Let $\Sigma$ be a measure space and let $p\in(1,\infty)$. 
Let $T_1,\ldots,T_d$ be commuting positive contractions on $L^p(\Sigma)$. 
Assume further that $T_1,\ldots,T_{d-1}$ are Ritt operators.

Then there exist a measure space $\Omega$, two bounded operators 
$J \colon L^p(\Sigma)\to L^p(\Omega)$ and $Q \colon L^p(\Omega)\to L^p(\Sigma)$,
as well as commuting isometric isomorphisms 
$U_1,\ldots,U_d\colon L^p(\Omega)\to L^p(\Omega)$ such that
\begin{equation*}
T_{1}^{n_1} \cdots T_{d}^{n_d} = Q U_{1}^{n_1} \cdots U_{d}^{n_d} J, 
\qquad (n_1,\ldots,n_d) \in \mathbb{N}^d.
\end{equation*}
\end{theorem}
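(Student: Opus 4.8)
The plan is to apply Lemma~\ref{lem41} with $X=L^p(\Sigma)$ and $m=d-1$, combining the dilation data of the Ritt operators $T_1,\ldots,T_{d-1}$ coming from the proof of Theorem~\ref{Th42} with the Akcoglu--Sucheston dilation of the remaining positive contraction $T_d$. For $d=1$ the statement is precisely the Akcoglu--Sucheston theorem, so we may assume $d\geq 2$. Since $L^p(\Sigma)$ is reflexive and both $L^p(\Sigma)$ and $L^{p'}(\Sigma)$ have finite cotype, the construction of Theorem~\ref{Th42} is available on $X=L^p(\Sigma)$.

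First, for each $k\leq d-1$, the operator $T_k$ is a positive contractive Ritt operator on an $L^p$-space, hence admits an $H^\infty(B_{\gamma_k})$ functional calculus for some $\gamma_k\in(0,\frac{\pi}{2})$ by \cite[Theorem 3.3]{LMX}. Carrying out the first part of the proof of Theorem~\ref{Th42} for the family $T_1,\ldots,T_{d-1}$ therefore produces a measure space $\Omega$, a single positive isometric isomorphism $U$ on $L^p(\Omega)$ (positive by Remark~\ref{Positive}), and bounded operators $J_k\colon X\to L^p(\Omega;X)$ and $Q_k\colon L^p(\Omega;X)\to X$ with $T_k^{n_k}=Q_k(U\overline{\otimes} I_X)^{n_k}J_k$ for all $n_k\in\Ndb$; moreover the intertwining identities $J_iT_j=(I_{L^p(\Omega)}\overline{\otimes} T_j)J_i$ hold for every $i\in\{1,\ldots,d-1\}$ and $j\in\{1,\ldots,d\}$. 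Setting $V_k:=U$ for $k\leq d-1$, this yields hypotheses (1) and (3) of Lemma~\ref{lem41}. For $T_d$, the Akcoglu--Sucheston theorem furnishes a measure space $\Sigma'$, an isometric isomorphism $V_d$ on $Y:=L^p(\Sigma')$ and bounded operators $J_d\colon X\to Y$, $Q_d\colon Y\to X$ with $T_d^{n_d}=Q_dV_d^{n_d}J_d$; since $m+1=d$, this is hypothesis (2).

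Lemma~\ref{lem41} then produces bounded operators $J\colon X\to L^p(\Omega^{d-1};Y)$ and $Q\colon L^p(\Omega^{d-1};Y)\to X$, together with operators $U_1,\ldots,U_d$ on $L^p(\Omega^{d-1};Y)$ — the operator $U_k$ being, for $k\leq d-1$, $U$ acting in the $k$-th tensor slot, and $U_d$ being $V_d$ acting in the $Y$-slot — such that $T_1^{n_1}\cdots T_d^{n_d}=QU_1^{n_1}\cdots U_d^{n_d}J$ for all $(n_1,\ldots,n_d)\in\Ndb^d$. It remains to observe that $L^p(\Omega^{d-1};Y)=L^p(\Omega^{d-1};L^p(\Sigma'))\cong L^p(\Omega^{d-1}\times\Sigma')$ is an $L^p$-space, which we take as the space $L^p(\Omega)$ of the statement (with the measure space renamed); that each $U_k$ is an isometric isomorphism, because $U$ and $V_d$ are and tensoring with identities on $L^p$-spaces preserves this property; and that $U_1,\ldots,U_d$ commute, since those with index at most $d-1$ act on pairwise distinct tensor slots while $U_d$ acts on the remaining factor. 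This completes the proof.

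The only step that requires a moment's attention is the verification of hypothesis (3) of Lemma~\ref{lem41} in the ``mixed'' case $i\leq d-1$, $j=d$: one needs the identity $J_iT_j=(I_{L^p(\Omega)}\overline{\otimes} T_j)J_i$ from the Theorem~\ref{Th42} construction to remain valid even though $T_d$ is only a commuting positive contraction and not a Ritt operator. This is harmless, as the verification in the proof of Theorem~\ref{Th42} is purely formal and uses nothing beyond the commutation of $T_i$ and $T_j$ — which already guarantees that $T_j$ leaves $\text{Ker}(I-T_i)$ and $\overline{\text{Ran}(I-T_i)}$ invariant and commutes with $T_i^n(I-T_i)^{1/2}(I+T_i)$.
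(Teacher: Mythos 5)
Your proof is correct and follows essentially the same route as the paper: apply Lemma \ref{lem41} with $m=d-1$ and $X=L^p(\Sigma)$, using \cite[Theorem 3.3]{LMX} together with the construction from the proof of Theorem \ref{Th42} to get hypotheses (1) and (3), the Akcoglu--Sucheston theorem for hypothesis (2), and the fact that $L^p(\Omega^{d-1};L^p(\Sigma'))$ is again an $L^p$-space. Your explicit verification of the mixed case $i\leq d-1$, $j=d$ of the intertwining identity (which indeed uses only the commutation of $T_i$ and $T_j$) is precisely the point the paper leaves to the reader.
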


\begin{proof}
We aim at applying Lemma \ref{lem41} with $m=d-1$ and $X=L^p(\Sigma)$.
For any $k=1,\ldots,d-1$, $T_k$ is a positive Ritt contraction on $L^p(\Sigma)$. 
According to \cite[Theorem 3.3]{LMX},
this implies that it has an $H^\infty(B_{\gamma_k})$ functional calculus 
for some $\gamma_k \in (0,\frac{\pi}{2})$.
By \cite[Theorem 4.1]{AFLM} and its proof, this implies that 
$T_1,\ldots,T_{d-1}$ satisfy the assumption (1) of Lemma \ref{lem41}.

According to the Ackoglu-Sucheston Theorem quoted above, 
$T_d$ satisfies the assumption (2) of Lemma \ref{lem41}, with $Y=L^p(\Sigma')$.

Moreover the argument in the proof of Theorem \ref{Th42} shows that  
$(T_1,\ldots,T_d)$ verifies the assumption (3) of Lemma \ref{lem41}.

The result now follows from this lemma and the fact that
$L^p(\Omega^m;Y)=L^p(\Omega^{d-1};L^p(\Sigma'))$ is an $L^p$-space. Details are left to the reader.
\end{proof}

In the last part of this section, we give analogues of our previous results for sectorial
operators and semigroups. Since the proofs are similar to the ones in the
discrete case, we will be deliberately brief.

We refer the reader to e.g. \cite{Paz} for definitions and basic properties of 
$C_0$-semigroups and bounded analytic semigroups. We recall that if $(T_t)_{t\geq 0}$
is a $C_0$-semigroup on $X$, with generator $-A$, then $A$ is sectorial
of type $<\frac{\pi}{2}$ if and only if  $(T_t)_{t\geq 0}$ is a bounded analytic semigroup.

We say that two $C_0$-semigroups
$(T_{1,t})_{t\geq 0}$ and $(T_{2,t})_{t\geq 0}$ on $X$ 
commute provided that 
\begin{equation}\label{comm}
T_{1,t_1} T_{2,t_2} = T_{2,t_2} T_{1,t_1}, \qquad t_1\geq 0,t_2\geq 0.
\end{equation}
Assume that $(T_{1,t})_{t\geq 0}$ and $(T_{2,t})_{t\geq 0}$ are bounded analytic semigroups
with respective generators $-A_1$ and $-A_2$. Then (\ref{comm}) holds true if and
only if the sectorial operators $A_1,A_2$ commute (in the resolvent sense, see Section \ref{FC}).

It is easy to adapt the proof of Lemma \ref{lem41} 
to semigroups to obtain the following result. We skip the proof.

\begin{lemma}\label{lem46}
Let $d \geq 2$ be an integer, 
let $(T_{1,t})_{t \geq 0},\ldots,(T_{d,t})_{t \geq 0}$ be commuting 
$C_0$-semigroups on a Banach space $X$ and let $p \in [1,\infty)$.
Let $1\leq m\leq d$. Assume that:
\begin{itemize}
\item[(1)] For every $k=1,\ldots,m$, there exist a $C_0$-semigroup $(V_{k,t})_{t \geq 0}$ of 
positive operators on some $L^p(\Omega)$ and two bounded operators 
$J_k \colon X \to L^p(\Omega;X)$ and $Q_k \colon  L^p(\Omega;X) \to X$ such that
\begin{equation*}
T_{k,t} = Q_k (V_{k,t} \overline{\otimes} I_X) J_k, \qquad t \geq 0.
\end{equation*}
\item[(2)] If $m<d$, there exist a Banach space $Y$, two bounded operators 
$J_{m+1}\colon X\to Y$ and $Q_{m+1}\colon Y\to X$ as well as commuting 
$C_0$-semigroups $(V_{m+1,t})_{t\geq 0},\ldots, (V_{d,t})_{t\geq 0}$ on $Y$ such that
$$
T_{m+1,t_{m+1}}\cdots T_{d,t_{d}}=Q_{m+1}V_{m+1,t_{m+1}}\cdots V_{d,t_{d}}
J_{m+1},\qquad t_{m+1}\geq 0,\ldots, t_d\geq 0.
$$
\item[(3)] For every $i=1,\ldots,m$ and $j=1,\ldots,d$, and for any 
$t \geq 0$, we have
\begin{equation*}
J_i T_{j,t} = (I_{L^p(\Omega)} \overline{\otimes} T_{j,t})J_i.
\end{equation*}
\end{itemize}
Then there exist two bounded operators $J \colon X \to L^p(\Omega^m;Y)$ and 
$Q \colon L^p(\Omega^m;Y)\to X$ such that
\begin{equation*}
T_{1,t_1} \cdots T_{d,t_d} = Q U_{1,t_1} \cdots U_{d,t_d} J, 
\qquad t_1 \geq 0,\ldots,t_d \geq 0,
\end{equation*}
where $(U_{1,t})_{t\geq 0},\ldots, (U_{d,t})_{t\geq 0}$ 
are $C_0$-semigroups on $L^p(\Omega^m;Y)$ given by
$$
U_{k,t} = I^{\otimes k-1} \overline{\otimes} V_{k,t} \
\overline{\otimes} I^{\otimes m-k} \overline{\otimes} I_Y,\qquad
k=1,\ldots,m;
$$
$$
U_{k,t}= I^{\otimes m} \overline{\otimes} V_{k,t},\qquad k=m+1,\ldots, d.
$$
\end{lemma}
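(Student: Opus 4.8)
The plan is to transcribe the proof of Lemma \ref{lem41} almost verbatim, replacing every discrete power $T_k^{n_k}$ by the semigroup operator $T_{k,t_k}$ and every factor $(V_k \overline{\otimes} I_X)^{n_k}$ by $V_{k,t_k}\overline{\otimes} I_X$. Since the intertwining maps $J_k$ and $Q_k$ carry no dynamics, the composite operators $\widetilde{Q_m}$ and $\widetilde{J_m}$ are defined by exactly the formulas (\ref{Q}) and (\ref{J}), and for $1\leq k\leq m$ I would set $S_{k,m,t}=I^{\otimes k-1}\overline{\otimes} V_{k,t}\overline{\otimes} I^{\otimes m-k}\overline{\otimes} I_X$ on $L^p(\Omega^m;X)$, in analogy with (\ref{Sk}).

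The first step is to prove, by induction on $m$ and using only hypotheses (1) and (3), the intermediate identity
$$
T_{1,t_1}\cdots T_{m,t_m}=\widetilde{Q_m}\,S_{1,m,t_1}\cdots S_{m,m,t_m}\,\widetilde{J_m},\qquad t_1,\ldots,t_m\geq 0.
$$
The case $m=1$ is hypothesis (1). For the inductive step I would write the left-hand side as $\widetilde{Q_{m-1}}\,S_{1,m-1,t_1}\cdots S_{m-1,m-1,t_{m-1}}\,\widetilde{J_{m-1}}\,T_{m,t_m}$, push $T_{m,t_m}$ through $\widetilde{J_{m-1}}$ one factor at a time by repeated use of (3) to reach $\widetilde{J_{m-1}}T_{m,t_m}=(I^{\otimes m-1}\overline{\otimes} T_{m,t_m})\widetilde{J_{m-1}}$, apply hypothesis (1) to $T_{m,t_m}$, observe that $I^{\otimes m-1}\overline{\otimes} V_{m,t_m}\overline{\otimes} I_X=S_{m,m,t_m}$, and finally commute each $S_{k,m-1,t_k}$ ($k\leq m-1$) past $I^{\otimes m-1}\overline{\otimes} Q_m$ exactly as in the proof of Lemma \ref{lem41}. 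The identities $\widetilde{Q_m}=\widetilde{Q_{m-1}}(I^{\otimes m-1}\overline{\otimes} Q_m)$ and $\widetilde{J_m}=(I^{\otimes m-1}\overline{\otimes} J_m)\widetilde{J_{m-1}}$ then close the induction.

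If $m=d$ this already proves the lemma (with $Y=X$), taking $U_{k,t}=S_{k,d,t}$, $Q=\widetilde{Q_d}$ and $J=\widetilde{J_d}$. If $m<d$, I would push each $T_{k,t_k}$ with $k>m$ through $\widetilde{J_m}$ using (3) to get $\widetilde{J_m}T_{k,t_k}=(I^{\otimes m}\overline{\otimes} T_{k,t_k})\widetilde{J_m}$, substitute the factorisation of hypothesis (2), then commute each $S_{k,m,t_k}$ ($k\leq m$) past $I^{\otimes m}\overline{\otimes} Q_{m+1}$ as in (\ref{Uk}); setting $Q=\widetilde{Q_m}(I^{\otimes m}\overline{\otimes} Q_{m+1})$ and $J=(I^{\otimes m}\overline{\otimes} J_{m+1})\widetilde{J_m}$ yields the claimed factorisation with the stated $U_{k,t}$.

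The only genuinely new point — and the main thing to verify with care — is the semigroup bookkeeping: one must check that each $U_{k,t}$ is a $C_0$-semigroup on $L^p(\Omega^m;Y)$. For $k\leq m$, $U_{k,t}$ is the ampliation to one tensor leg of the positive $C_0$-semigroup $V_{k,t}$ on $L^p(\Omega)$, so it is bounded by positivity and strongly continuous because strong continuity passes from elementary tensors to all of $L^p(\Omega^m;Y)$ by density and local uniform boundedness; for $k>m$, $U_{k,t}=I^{\otimes m}\overline{\otimes} V_{k,t}$ is the pointwise application of $V_{k,t}$, hence automatically bounded with $\norm{U_{k,t}}\leq\norm{V_{k,t}}$, and strongly continuous by the same density argument. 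Apart from this, the proof is a line-by-line transcription of that of Lemma \ref{lem41} with $n_k$ replaced by $t_k$, which is why it may safely be omitted.
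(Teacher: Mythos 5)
Your proposal is correct and is exactly the route the paper intends: the paper omits the proof of Lemma \ref{lem46}, stating that it is obtained by adapting the proof of Lemma \ref{lem41}, and your line-by-line transcription (powers $n_k$ replaced by parameters $t_k$, with the same $\widetilde{Q_m}$, $\widetilde{J_m}$ and the ampliated operators) is precisely that adaptation. Your additional verification that each $(U_{k,t})_{t\geq 0}$ is a $C_0$-semigroup (boundedness of the ampliation via positivity of $V_{k,t}$, strong continuity via density and local uniform boundedness) is a correct and welcome piece of bookkeeping that the paper leaves implicit.
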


The construction in the proof of \cite[Theorem 4.5]{AFLM} is an analogue 
of the construction in the proof of \cite[Theorem 4.1]{AFLM} where
discrete square functions based on Rademacher averages are replaced
by continuous square functions provided by Brownian motion. 
Using this construction and using 
Lemma \ref{lem46} instead of Lemma \ref{lem41}, we obtain the following 
sectorial version of Theorem \ref{Th42}.

\begin{theorem} \label{Th46}
Let $X$ be a reflexive Banach space such that $X$ and $X^*$ have finite cotype.
Let $A_1,\ldots,A_d$ be commuting sectorial operators on $X$ such that every 
$A_k$ has an $H^\infty(\Sigma_{\theta_k})$ functional calculus for 
some $\theta_k$ in $(0,\frac{\pi}{2})$. Let $p\in(1,\infty)$.
Then there exist a measure space $\Omega$, commuting $C_0$-groups of isometries 
$(U_{1,t})_{t \in \mathbb{R}},\ldots,(U_{d,t})_{t \in \mathbb{R}}$ on 
$L^p(\Omega;X)$, and two bounded operators $J \colon X \to L^p(\Omega;X)$ and $Q \colon
L^p(\Omega;X) \to X$ such that
\begin{equation*}
e^{-t_1A_1} \cdots e^{-t_dA_d} = Q U_{1,t_1} \cdots U_{d,t_d} J, \qquad t_1 \geq 0,\ldots,t_d \geq 0.
\end{equation*}
\end{theorem}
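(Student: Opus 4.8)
The plan is to mimic the proof of Theorem \ref{Th42} line by line, with the single-operator dilation of \cite[Theorem 4.1]{AFLM} replaced by its continuous analogue \cite[Theorem 4.5]{AFLM}, and with Lemma \ref{lem41} replaced by Lemma \ref{lem46} applied in the case $m=d$ (so that assumption (2) of that lemma is vacuous). Throughout, the hypotheses ``$X$ reflexive, $X$ and $X^*$ of finite cotype, and each $A_k$ with bounded $H^\infty$ functional calculus'' are precisely those needed for \cite[Theorem 4.5]{AFLM} to apply to each $A_k$ individually.

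First I would recall the single-semigroup construction of \cite[Theorem 4.5]{AFLM}. For each $k$, using the decomposition $X=\mathrm{Ker}(A_k)\oplus\overline{\mathrm{Ran}(A_k)}$ and, on the second summand, the continuous square function built from Brownian motion, one obtains a measure space $\Omega_0$, a $C_0$-group $(V_t)_{t\in\Rdb}$ of positive isometries on $L^p(\Omega_0)$ --- the \emph{same} group for every $k$, just as the isometric isomorphism $U$ in the proof of Theorem \ref{Th42} does not depend on $k$ --- and bounded operators $J_k\colon X\to L^p(\Omega_0;X)$, $Q_k\colon L^p(\Omega_0;X)\to X$ such that
\begin{equation*}
e^{-tA_k}=Q_k\,(V_t\overline{\otimes} I_X)\,J_k,\qquad t\geq 0.
\end{equation*}
Setting $(V_{k,t})_{t\geq 0}=(V_t)_{t\geq 0}$ for all $k$ gives assumption (1) of Lemma \ref{lem46}. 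Next I would verify assumption (3), namely $J_i\,e^{-tA_j}=(I_{L^p(\Omega_0)}\overline{\otimes} e^{-tA_j})\,J_i$ for all $i,j$ and $t\geq 0$: since $A_i$ and $A_j$ commute in the resolvent sense, $e^{-tA_j}$ commutes with every $\psi(A_i)$, $\psi\in H^\infty_0(\Sigma_{\theta_i})$, occurring in the definition of $J_i$, and it preserves both $\mathrm{Ker}(A_i)$ and $\overline{\mathrm{Ran}(A_i)}$; pulling $e^{-tA_j}$ through the defining integral of $J_i$ then yields (3), exactly as in the computation displayed in the proof of Theorem \ref{Th42}.

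Applying Lemma \ref{lem46} with $m=d$, $Y=X$ and $\Omega=\Omega_0$ then produces bounded operators $J\colon X\to L^p(\Omega_0^d;X)$ and $Q\colon L^p(\Omega_0^d;X)\to X$, together with $C_0$-semigroups $(U_{k,t})_{t\geq 0}$ on $L^p(\Omega_0^d;X)$ given by $U_{k,t}=I^{\otimes k-1}\overline{\otimes}\,V_t\,\overline{\otimes}\,I^{\otimes d-k}\overline{\otimes}\,I_X$, such that $e^{-t_1A_1}\cdots e^{-t_dA_d}=QU_{1,t_1}\cdots U_{d,t_d}J$ for all $t_1,\dots,t_d\geq 0$. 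Because $(V_t)_{t\in\Rdb}$ is a $C_0$-group of invertible isometries, each $(U_{k,t})$ extends to a $C_0$-group of isometries indexed by $t\in\Rdb$, and for $k\neq l$ the groups $(U_{k,t})$ and $(U_{l,t})$ commute since they act on distinct tensor coordinates of $L^p(\Omega_0^d;X)$; relabelling $\Omega=\Omega_0^d$ gives the statement. The only step requiring genuine care is extracting from \cite[Theorem 4.5]{AFLM} the exact package needed here --- a fixed positive isometry group $V$ independent of $k$, together with the intertwining identity (3) --- after which Lemma \ref{lem46} performs all the bookkeeping of assembling the $d$ individual dilations into a joint one.
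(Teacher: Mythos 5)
Your proposal is correct and follows essentially the same route as the paper, which proves this result precisely by replacing the discrete construction of \cite[Theorem 4.1]{AFLM} with the Brownian-motion-based construction of \cite[Theorem 4.5]{AFLM} and invoking Lemma \ref{lem46} (with $m=d$) in place of Lemma \ref{lem41}. Your verification of the intertwining assumption and of the extension of the semigroups $(U_{k,t})_{t\geq 0}$ to commuting $C_0$-groups of isometries supplies exactly the details the paper leaves implicit.
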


Using the previous result and adapting 
the proof of \cite[Theorem 5.6]{AFLM} to the $d$-variable case, we obtain
the following 
sectorial version of Theorem \ref{Th43}.

\begin{theorem} \label{Th47}
Let $X$ be a UMD Banach space with property $(\alpha)$ and let $d\geq 1$ be an integer.
Let $A_1,\ldots,A_d$ be commuting sectorial operators 
and let $p \in (1,\infty)$. The following two conditions are equivalent.

\begin{itemize}
\item[(1)] $(A_1,\ldots,A_d)$ admits an 
$H^\infty(\Sigma_{\theta_1} \times \cdots \times \Sigma_{\theta_d})$ 
joint functional calculus for some $\theta_k \in (0,\frac{\pi}{2})$, $k=1,\ldots,d$.
\item[(2)] There exist a measure space $\Omega$, commuting sectorial operators 
$B_1,\ldots,B_d$ on $L^p(\Omega;X)$ such that every $B_k$ 
admits an $H^\infty(\Sigma_{\theta_k'})$ functional calculus for some 
$\theta_k' \in (0,\frac{\pi}{2})$, $k=1,\ldots,d$, as well as 
two bounded operators $J \colon X \to L^p(\Omega;X)$ and $Q 
\colon L^p(\Omega;X) \to X$ such that
\begin{equation*}
e^{-t_1 A_1} \cdots e^{-t_d A_d} = Q e^{-t_1 B_1} 
\cdots e^{-t_d B_d} J, \qquad t_1 \geq 0, \ldots, t_d \geq 0,
\end{equation*}
and all the $(e^{-t B_k})_{t \geq 0}$ are semigroups of contractions.
\end{itemize}
\end{theorem}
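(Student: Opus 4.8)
The plan is to follow the proof of Theorem \ref{Th43} essentially verbatim, using the sectorial dilation theorem (Theorem \ref{Th46}) in place of Theorem \ref{Th42} and the sectorial automaticity statement Theorem \ref{Th32}\,(\textbf{P2}) in place of its Ritt analogue. Recall that a UMD space is reflexive and of finite cotype, and that both the UMD property and property $(\alpha)$ pass from $X$ to any Bochner space $L^p(\Omega;X)$.

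\textbf{Sketch of $(1)\Rightarrow(2)$.} The joint $H^\infty(\Sigma_{\theta_1}\times\cdots\times\Sigma_{\theta_d})$ calculus forces every $A_k$ to have an $H^\infty(\Sigma_{\theta_k})$ calculus; in particular each $A_k$ is sectorial of some type $\omega_k<\frac{\pi}{2}$. I would pick $a>1$ close enough to $1$ that, by the standard composition property of the $H^\infty$ calculus under $z\mapsto z^a$, each $A_k^a$ is sectorial of type $<\frac{\pi}{2}$ and has an $H^\infty(\Sigma_{\sigma_k})$ calculus for some $\sigma_k<\frac{\pi}{2}$, and set $b=\frac1a\in(0,1)$. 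Since $(A_1^a,\ldots,A_d^a)$ still commute, Theorem \ref{Th46} applied to this $d$-tuple provides a measure space $\Omega$, commuting $C_0$-groups of isometries $(U_{k,t})_{t\in\mathbb{R}}$ on $L^p(\Omega;X)$, and bounded operators $J\colon X\to L^p(\Omega;X)$ and $Q\colon L^p(\Omega;X)\to X$ with
\begin{equation}\label{plan-dil}
e^{-t_1A_1^a}\cdots e^{-t_dA_d^a}=Q\,U_{1,t_1}\cdots U_{d,t_d}\,J,\qquad t_1,\ldots,t_d\geq 0.
\end{equation}
Let $-C_k$ be the generator of $(U_{k,t})_{t\in\mathbb{R}}$; the $C_k$ are then commuting sectorial operators of type $\leq\frac{\pi}{2}$ whose semigroups $(e^{-uC_k})_{u\geq 0}$ are isometric. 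I would set $B_k=C_k^{\,b}$, so $B_k$ is sectorial of type $\leq\frac{\pi}{2a}<\frac{\pi}{2}$; and since $L^p(\Omega;X)$ is a UMD space, the negative generator of a bounded $C_0$-group has an $H^\infty(\Sigma_\theta)$ calculus for every $\theta\in(\frac{\pi}{2},\pi)$, hence $B_k=C_k^{\,b}$ has an $H^\infty(\Sigma_{\theta_k'})$ calculus for some $\theta_k'<\frac{\pi}{2}$. Applying Bochner subordination — for $0<b<1$ there is a probability kernel $\eta_b(s,\cdot)$ with $e^{-sD^{\,b}}=\int_0^\infty\eta_b(s,u)\,e^{-uD}\,du$ for every generator $D$ of a bounded semigroup — with $D=A_k^a$ (so that $(A_k^a)^{\,b}=A_k$) and with $D=C_k$, and using that the $U_{k,u}$ commute, formula (\ref{plan-dil}) becomes
\begin{equation*}
e^{-t_1A_1}\cdots e^{-t_dA_d}=Q\,e^{-t_1B_1}\cdots e^{-t_dB_d}\,J,\qquad t_1,\ldots,t_d\geq 0.
\end{equation*}
Finally $\|e^{-tB_k}\|\leq\int_0^\infty\eta_b(t,u)\,\|U_{k,u}\|\,du=1$, so all $(e^{-tB_k})_{t\geq 0}$ are semigroups of contractions, which gives (2).

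\textbf{Sketch of $(2)\Rightarrow(1)$.} Since $X$ is UMD with property $(\alpha)$, so is $L^p(\Omega;X)$, and each $B_k$ has an $H^\infty(\Sigma_{\theta_k'})$ calculus with $\theta_k'<\frac{\pi}{2}$; Theorem \ref{Th32}\,(\textbf{P2}) therefore yields a joint $H^\infty(\Sigma_{\theta_1''}\times\cdots\times\Sigma_{\theta_d''})$ calculus for $(B_1,\ldots,B_d)$, with some $\theta_k''<\frac{\pi}{2}$ and constant $K$. It remains to transfer this bound through the dilation. For $\phi$ ranging over a suitable class of kernels, the Laplace-type functions $f(z_1,\ldots,z_d)=\int_{[0,\infty)^{|J|}}\phi(t)\prod_{i\in J}e^{-t_iz_i}\,dt$ ($\emptyset\neq J\subset\{1,\ldots,d\}$) form a norming subset of $H_{0,1}^\infty(\Sigma_{\theta_1''}\times\cdots\times\Sigma_{\theta_d''})$, and for each such $f$ the displayed dilation gives $f(A_1,\ldots,A_d)=Q\,f(B_1,\ldots,B_d)\,J$, hence $\|f(A_1,\ldots,A_d)\|\leq\|Q\|\,\|J\|\,K\,\|f\|_\infty$; a density argument as in \cite{AFLM} and \cite{Fa} then upgrades this to the full joint $H^\infty$ calculus for $(A_1,\ldots,A_d)$.

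The routine parts are the fractional-power bookkeeping, the subordination identities and the commutation relations. The hard part is the $(1)\Rightarrow(2)$ construction: producing \emph{contractive} sectorial operators $B_k$ of type $<\frac{\pi}{2}$ whose semigroups dilate those of the $A_k$ is what forces the detour through $A_k^a$ with $a>1$ followed by subordination with exponent $b=1/a$, and it relies on the UMD input that the generator $C_k$ of a bounded $C_0$-group has an $H^\infty$ calculus on sectors of half-angle exceeding $\frac{\pi}{2}$. A secondary difficulty, on the $(2)\Rightarrow(1)$ side, is to make precise that the Laplace-type functions form a norming/dense class so that the transference passes to the operator level. Both points are handled exactly as in \cite[Theorems 5.1 and 5.6]{AFLM}.
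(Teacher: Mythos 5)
Your implication $(1)\Rightarrow(2)$ is essentially the intended argument: it mirrors the proof of Theorem \ref{Th43}, replacing $(T_k)_a$ by the fractional powers $A_k^a$, Theorem \ref{Th42} by Theorem \ref{Th46}, and the passage $(U_k)_b$ by Bochner subordination applied to $B_k=C_k^{\,b}$, with the Hieber--Pr\"uss theorem on the UMD space $L^p(\Omega;X)$ supplying the $H^\infty$ calculus of the group generators; this is exactly the $d$-variable adaptation of \cite[Theorem 5.6]{AFLM} that the paper has in mind.

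The implication $(2)\Rightarrow(1)$, however, contains a genuine gap at the transference step. You apply Theorem \ref{Th32} (\textbf{P2}) to $(B_1,\ldots,B_d)$ and then claim that Laplace-type functions $f(z)=\int\phi(t)\prod_{i\in J}e^{-t_iz_i}\,dt$ form a \emph{norming} subset of $H^\infty_{0,1}(\Sigma_{\theta_1''}\times\cdots\times\Sigma_{\theta_d''})$ with $\theta_k''<\frac{\pi}{2}$, so that a density argument upgrades the transferred bound to the full joint calculus at these angles. This is precisely the delicate point and it is not justified: such functions are Laplace transforms of integrable kernels, hence bounded analytic on products of full right half-planes, and what the semigroup dilation transfers naturally is a bound of $\|f(A_1,\ldots,A_d)\|$ in terms of sup-norms over products of half-planes, equivalently over sectors of angle exceeding $\frac{\pi}{2}$ (one gets this cleanly by taking Laplace transforms of the dilation identity, continuing analytically to obtain $\prod_i R(\lambda_i,A_i)=Q\prod_i R(\lambda_i,B_i)J$ for $\lambda_i$ in the left half-plane, and running the Cauchy integrals over $\partial\Sigma_{\nu_i}$ with $\nu_i>\frac{\pi}{2}$). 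Condition (1) demands angles strictly below $\frac{\pi}{2}$, and a general $f\in H^\infty_0(\prod\Sigma_{\theta_k''})$ need not even extend to the half-planes; the approximation with controlled $\prod\Sigma_{\theta_k''}$-norms and the compatibility/convergence of the corresponding functional calculi is exactly the missing content, and it is not what \cite[Theorems 5.1 and 5.6]{AFLM} do (Theorem 5.1 uses polynomials, as in Theorem \ref{Th43} here, and Theorem 5.6 does not proceed by a density-of-Laplace-transforms argument below angle $\frac{\pi}{2}$).

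The gap can be closed in either of two standard ways, both avoiding your norming claim. Simplest: restrict the dilation to one variable at a time ($t_j=0$ for $j\neq k$) to get $e^{-tA_k}=Qe^{-tB_k}J$, invoke the one-variable result \cite[Theorem 5.6]{AFLM} to conclude that each $A_k$ admits an $H^\infty(\Sigma_{\theta_k})$ calculus for some $\theta_k<\frac{\pi}{2}$, and then apply Theorem \ref{Th32} (\textbf{P2}) to $(A_1,\ldots,A_d)$ (not to $(B_1,\ldots,B_d)$), using property $(\alpha)$ of $X$, to obtain the joint calculus with angles still below $\frac{\pi}{2}$. Alternatively, keep your transference but only claim the joint calculus at angles $>\frac{\pi}{2}$ via the resolvent identity above, and then reduce the angle: the identity $R(\lambda,A_k)=QR(\lambda,B_k)J$ shows each $A_k$ is (R-)sectorial of angle $<\frac{\pi}{2}$ (R-boundedness passes through fixed left and right multiplications), and the Kalton--Weis angle-reduction theorem \cite{KW1} then gives each $A_k$ an $H^\infty(\Sigma_{\theta_k})$ calculus with $\theta_k<\frac{\pi}{2}$, after which (\textbf{P2}) finishes as before.
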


We now give the sectorial version 
of Theorem \ref{Th44}.

\begin{theorem} \label{Th48}
Let $\Sigma$ be a measure space and let $p \in (1,\infty)$. 
Let $A_1,\ldots,A_d$ be commuting sectorial operators on $L^p(\Sigma)$. 
The following conditions are equivalent.

\begin{itemize}
\item[(1)] $(A_1,\ldots,A_d)$ admits an $H^\infty(\Sigma_{\theta_1} 
\times \cdots \times \Sigma_{\theta_d})$ joint functional calculus for 
some $\theta_k \in (0,\frac{\pi}{2})$, $k=1,\ldots,d$.
\item[(2)] There exist a measure space $\Omega$, commuting sectorial 
operators $B_1,\ldots,B_d$ on $L^p(\Omega)$ of type $<\frac{\pi}{2}$,
and two bounded operators $J\colon L^p(\Sigma) \to L^p(\Omega)$ and $Q 
\colon L^p(\Omega) \to L^p(\Sigma)$ such that
\begin{equation*}
e^{-t_1 A_1} \cdots e^{-t_d A_d} = Q e^{-t_1 B_1} \cdots e^{-t_d B_d} J, 
\qquad t_1 \geq 0, \ldots, t_d \geq 0,
\end{equation*}
and all the $(e^{-t B_k})_{t \geq 0}$ are semigroups of positive contractions.
\end{itemize}
\end{theorem}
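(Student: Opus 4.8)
The plan is to derive Theorem \ref{Th48} from Theorem \ref{Th47} in exactly the way Theorem \ref{Th44} was derived from Theorem \ref{Th43}. First I would apply Theorem \ref{Th47} with $X=L^p(\Sigma)$, which is a UMD Banach space with property $(\alpha)$, observing that for any measure space $\Omega$ the Bochner space $L^p(\Omega;L^p(\Sigma))$ is isometrically an $L^p$-space, and that condition (1) of Theorem \ref{Th48} is literally condition (1) of Theorem \ref{Th47}.

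For the implication $(1)\Rightarrow(2)$, Theorem \ref{Th47} already provides a measure space $\Omega$, commuting sectorial operators $B_1,\ldots,B_d$ on $L^p(\Omega;L^p(\Sigma))$ --- hence on an $L^p$-space --- each admitting an $H^\infty(\Sigma_{\theta_k'})$ functional calculus with $\theta_k'\in(0,\frac{\pi}{2})$, so that in particular each $B_k$ is of type $<\frac{\pi}{2}$, together with bounded operators $J,Q$ realizing the dilation, the $(e^{-tB_k})_{t\geq 0}$ being semigroups of contractions. The only additional point is positivity, which I would handle through the continuous analogue of Remark \ref{Positive}: revisiting the Brownian-motion-based construction in the proof of \cite[Theorem 4.5]{AFLM}, the $C_0$-groups of isometries entering Theorem \ref{Th46} can be chosen positive, and since positivity is preserved through the subordination step which produces the $B_k$ from these groups, the semigroups $(e^{-tB_k})_{t\geq 0}$ are positive as soon as $X$ is an ordered Banach space, which is the case for $X=L^p(\Sigma)$. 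This yields condition (2) of Theorem \ref{Th48}.

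For the converse $(2)\Rightarrow(1)$, I would first transfer the functional calculus: writing each resolvent $R(z_k,A_k)$ as the Laplace transform of the bounded analytic semigroup $(e^{-tA_k})_{t\geq 0}$ and inserting the dilation identity gives $\prod_{k=1}^d R(z_k,A_k)=Q\bigl(\prod_{k=1}^d R(z_k,B_k)\bigr)J$, whence $f(A_1,\ldots,A_d)=Qf(B_1,\ldots,B_d)J$ for every $f$ in the algebra $H_{0,1}^\infty(\Sigma_{\theta_1}\times\cdots\times\Sigma_{\theta_d})$; consequently a joint $H^\infty$ functional calculus for $(B_1,\ldots,B_d)$ implies one for $(A_1,\ldots,A_d)$. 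It thus remains to produce a joint $H^\infty$ functional calculus for $(B_1,\ldots,B_d)$: each $B_k$ is the negative generator of a bounded analytic semigroup of positive contractions on $L^p(\Omega)$, hence admits an $H^\infty(\Sigma_{\theta_k'})$ functional calculus for some $\theta_k'<\frac{\pi}{2}$ by the sectorial counterpart of \cite[Theorem 3.3]{LMX}, and since $L^p(\Omega)$ has property $(\alpha)$, property (\textbf{P2}) of Theorem \ref{Th32} applies and gives the desired joint calculus; the transfer identity then completes the argument.

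The step I expect to be the main obstacle is the positivity claim in $(1)\Rightarrow(2)$: everything else is a faithful transcription of the Ritt arguments, but this point requires reopening the construction underlying \cite[Theorem 4.5]{AFLM} --- together with the subordination step built on top of it --- and checking that both respect the order structure of $L^p$-spaces.
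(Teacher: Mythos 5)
Your proposal is correct and is essentially the paper's own argument: for (1)$\Rightarrow$(2) it applies Theorem \ref{Th47} with $X=L^p(\Sigma)$ together with the positivity of the continuous dilation construction of \cite{AFLM} (the sectorial analogue of Remark \ref{Positive}), and for (2)$\Rightarrow$(1) it combines Weis's theorem that a bounded analytic semigroup of positive contractions on an $L^p$-space admits an $H^\infty(\Sigma_\theta)$ calculus with $\theta<\frac{\pi}{2}$ (your ``sectorial counterpart of \cite[Theorem 3.3]{LMX}'', cited in the paper as \cite{We,KW1}) with Theorem \ref{Th32} and the transfer of the joint calculus through the dilation identity. The paper's proof is exactly this reduction, phrased tersely as ``arguing as in the proof of Theorem \ref{Th44}''.
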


\begin{proof} 
If $B$ is a sectorial operator of type $<\frac{\pi}{2}$ on $L^p(\Omega)$
such that $e^{-t B}$ is a positive contraction for any $t\geq 0$, then 
$B$ has an $H^\infty(\Sigma_\theta)$ functional calculus for some
$\theta<\frac{\pi}{2}$. This result is due to Weis, see \cite{We,KW1}.
Using this and arguing as in the proof of Theorem \ref{Th44}, the result follows at once.
\end{proof}

We conclude with a semigroup version of 
Theorem \ref{Th45}. We first recall that
Fendler \cite{Fe} proved the following semigroup version of the Akcoglu-Sucheston Theorem:
Let $(T_t)_{t\geq 0}$ be a $C_0$-semigroups of positive contractions 
on $L^p(\Sigma)$, with $p\in(1,\infty)$. Then there 
exist a measure space $\Sigma'$, a $C_0$-group $(V_t)_{t\geq 0}$ 
of isometric isomorphisms on $L^p(\Sigma')$
and two contractions $J\colon L^p(\Sigma)\to L^p(\Sigma')$
and $Q\colon L^p(\Sigma')\to
L^p(\Sigma)$ such that $T_t=QV_tJ$ for any $t\geq 0$.

Using this result and Lemma \ref{lem46}, and arguing as in the proof
of Theorem \ref{Th45}, we obtain the following.

\begin{theorem} \label{Th49} 
Let $\Sigma$ be a measure space and let $p\in(1,\infty)$. 
Let $(T_{1,t})_{t\geq 0},\ldots, (T_{d,t})_{t\geq 0}$ be 
$C_0$-semigroups of positive contractions
on $L^p(\Sigma)$. Assume further that $(T_{1,t})_{t\geq 0},\ldots, (T_{d-1,t})_{t\geq 0}$
are bounded analytic semigroups.

Then there exist a measure space $\Omega$, two bounded operators 
$J \colon L^p(\Sigma)\to L^p(\Omega)$ and $Q \colon L^p(\Omega)\to L^p(\Sigma)$,
as well as commuting $C_0$-groups $(U_{1,t})_{t\geq 0},\ldots, (U_{d,t})_{t\geq 0}$
of isometric isomorphisms on $L^p(\Omega)$ such that
\begin{equation*}
T_{1,t_1}  \cdots T_{d,t_d}  = Q U_{1,t_1}  \cdots U_{d,t_d}  J, 
\qquad t_1\geq 0,\ldots, t_d\geq 0.
\end{equation*}
\end{theorem}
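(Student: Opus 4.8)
The plan is to follow the proof of Theorem \ref{Th45} essentially line by line, substituting the semigroup dilation lemma \ref{lem46} for Lemma \ref{lem41} and replacing each discrete ingredient by its continuous counterpart. Concretely, I would apply Lemma \ref{lem46} with $m=d-1$ and $X=L^p(\Sigma)$, so that $(T_{1,t})_{t\geq 0},\ldots,(T_{d-1,t})_{t\geq 0}$ play the role of the first $m$ semigroups and $(T_{d,t})_{t\geq 0}$ that of the last block.

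First I would verify hypothesis (1) of Lemma \ref{lem46}. For $k=1,\ldots,d-1$, the semigroup $(T_{k,t})_{t\geq 0}$ is a bounded analytic semigroup of positive contractions on $L^p(\Sigma)$, hence its negative generator $A_k$ is sectorial of type $<\frac{\pi}{2}$ and, by the theorem of Weis already invoked in the proof of Theorem \ref{Th48} (see \cite{We,KW1}), $A_k$ admits an $H^\infty(\Sigma_{\theta_k})$ functional calculus for some $\theta_k<\frac{\pi}{2}$. Feeding this into the Brownian-motion square-function construction from the proof of \cite[Theorem 4.5]{AFLM} produces, for each such $k$, a $C_0$-semigroup $(V_{k,t})_{t\geq 0}$ of positive isometric isomorphisms on some $L^p(\Omega)$ together with bounded maps $J_k\colon X\to L^p(\Omega;X)$ and $Q_k\colon L^p(\Omega;X)\to X$ with $T_{k,t}=Q_k(V_{k,t}\overline{\otimes}I_X)J_k$; positivity of the $V_{k,t}$ is the semigroup analogue of Remark \ref{Positive}, and follows from the same feature of the construction in \cite[Theorem 4.5]{AFLM}. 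This is precisely hypothesis (1).

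For hypothesis (2) (the case $m<d$), Fendler's semigroup version of the Akcoglu--Sucheston theorem recalled just above applies to the single positive contraction semigroup $(T_{d,t})_{t\geq 0}$: it dilates through the $L^p$-space $Y=L^p(\Sigma')$ by a $C_0$-group $(V_{d,t})$ of isometric isomorphisms together with contractions $J_{m+1},Q_{m+1}$. Hypothesis (3), the intertwining $J_iT_{j,t}=(I_{L^p(\Omega)}\overline{\otimes}T_{j,t})J_i$, is checked exactly as in the proof of Theorem \ref{Th42} (or of its semigroup counterpart Theorem \ref{Th46}): using the explicit formula for $J_i$ coming from the ergodic decomposition $X=\text{Ker}(A_i)\oplus\overline{\text{Ran}(A_i)}$ and the commutation of $T_{i,s}$ with $T_{j,t}$, one pushes $T_{j,t}$ through each factor of $J_i$. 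With (1), (2) and (3) in hand, Lemma \ref{lem46} furnishes bounded operators $J,Q$ and commuting $C_0$-semigroups $(U_{1,t})_{t\geq 0},\ldots,(U_{d,t})_{t\geq 0}$ on $L^p(\Omega^{d-1};Y)$ of the stated tensor form that dilate $T_{1,t_1}\cdots T_{d,t_d}$; since every $V_{k,t}$ (for $k\leq m$) and every $V_{d,t}$ is an invertible isometry of an $L^p$-space, each $U_{k,t}$ is an isometric isomorphism extending to a $C_0$-group, and $L^p(\Omega^{d-1};L^p(\Sigma'))$ is itself an $L^p$-space, which gives the conclusion.

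I expect hypothesis (3) to be the main obstacle: carrying it out rigorously requires the explicit description of the dilation maps $J_i$ produced in the proof of \cite[Theorem 4.5]{AFLM} and a verification that they intertwine with \emph{all} the remaining semigroups $(T_{j,t})$, $j\neq i$; this is the only place where the commutation of the whole family is genuinely used, and the only point that is not a verbatim transcription of the discrete argument of Theorem \ref{Th45}. A secondary but essential technical remark is that the square-function construction of \cite[Theorem 4.5]{AFLM} yields a \emph{positive} dilating semigroup, so that the positivity demanded of the first $m$ semigroups in the hypotheses of Lemma \ref{lem46} is in fact available.
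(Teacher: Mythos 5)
Your proposal is correct and follows essentially the same route as the paper: the paper's own (very condensed) proof consists precisely of applying Lemma \ref{lem46} with $m=d-1$, using Weis's theorem (as in the proof of Theorem \ref{Th48}) plus the construction of \cite[Theorem 4.5]{AFLM} for hypothesis (1), Fendler's theorem for hypothesis (2), and the intertwining argument from the proof of Theorem \ref{Th42} for hypothesis (3), exactly as you describe. Your remarks on the positivity of the dilating semigroups and on where commutativity of the whole family enters correspond to Remark \ref{Positive} and the verification of (\ref{comJT}) in the paper, so nothing essential is missing.
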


\section{The Hilbert space case}\label{Hilbert}
This section is devoted to commuting operators 
on Hilbert space $H$. We will be interested in the following two issues.

First recall that if $T\colon H\to H$ is a Ritt operator, then $T$ has an $H^\infty(B_\gamma)$
functional calculus for some $\gamma<\frac{\pi}{2}$ if and only if
$T$ is similar to a contraction, that is, there exists a bounded invertible 
operator $S\colon H\to H$ such that $S^{-1}TS$ is a contraction on $H$.
This is proved in \cite[Theorem 8.1]{LM}.
We will extend this characterisation to $d$-tuples of Ritt operators,
see Corollary \ref{JFC-Hilbert} below.

Second let $(T_1,\ldots,T_d)$ be a $d$-tuple of commuting contractions
on $H$. If $d=2$, Ando's Theorem \cite{An} (see also \cite[Theorem 1.2]{P}) 
asserts that $\norm{\phi(T_1,T_2)}\leq \norm{\phi}_{\infty,\Ddb^2}$
for any polynomial $\phi\in\P_2$. This result does not extend to $d\geq 3$
and it is unknown whether there exists a universal constant $C\geq 1$ such that 
\begin{equation}\label{vN}
\norm{\phi(T_1,\ldots, T_d)}\leq C\norm{\phi}_{\infty,\Ddb^d}
\end{equation}
for any $\phi\in\P_d$ (see \cite[Chapter 1]{P} for more on this problem). Theorem \ref{Th51} below 
shows that an estimate (\ref{vN}) holds true when at least $d-2$ of these
contractions are Ritt operators.

\begin{theorem} \label{Th51}
Let $d \geq 3$ be an integer and let $H$ be a Hilbert space. 
Let $T_1,\ldots,T_d$ be commuting operators on $H$ such that:
\begin{itemize}
\item[(i)] For every $j$ in $\left\lbrace 1,\ldots,d-2\right\rbrace$, 
$T_j$ is a Ritt operator which is similar to a contraction.
\item[(ii)] There exists a bounded invertible operator 
$S\colon H\to H$ such that $S^{-1} T_{d-1} S$ and $S^{-1} T_{d} S$ 
are both contractions.
\end{itemize}
Then we have the following three properties:
\begin{itemize}
\item[(1)] There exist a Hilbert space $K$, two bounded operators 
$J \colon H \to K$ and $Q \colon K \to H$ and 
commuting unitary operators $U_1,\ldots,U_d$ on $K$ such that
\begin{equation} \label{combcontr}
T_1^{n_1} \cdots T_d^{n_d} = Q U_1^{n_1} \cdots U_d^{n_d} J, 
\qquad (n_1,\ldots,n_d) \in \mathbb{N}^d.
\end{equation}

\item[(2)] There exists $C \geq 1$ such that for any 
polynomial $\phi$ in $\mathcal{P}_d$, 
\begin{equation} \label{calccontr}
\left\| \phi(T_1,\ldots,T_d) \right\| 
\leq C \left\| \phi \right\|_{\infty, \mathbb{D}^d}.
\end{equation}

\item[(3)] There exists a bounded invertible operator
$S\colon H\to H$ such that for any $j=1,\ldots,d$,
$S^{-1} T_j S$ is a contraction.
\end{itemize}
\end{theorem}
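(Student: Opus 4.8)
The strategy is to obtain assertion~(1) from Lemma \ref{lem41} applied with $p=2$, $X=H$ and $m=d-2$, following the pattern of the proof of Theorem \ref{Th45} but with Ando's theorem in the role that the Akcoglu--Sucheston theorem played there: the $d-2$ Ritt operators $T_1,\dots,T_{d-2}$ will supply the $L^2$-dilation blocks of that lemma, and the remaining commuting pair $(T_{d-1},T_d)$ will supply the last block. Once (1) holds, assertions~(2) and (3) are derived from it.

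First I would check the three hypotheses of Lemma \ref{lem41}. For hypothesis~(1): by assumption~(i) and \cite[Theorem~8.1]{LM}, each $T_k$ with $k\le d-2$ admits an $H^\infty(B_{\gamma_k})$ functional calculus for some $\gamma_k\in(0,\frac{\pi}{2})$, so the construction recalled in the proof of Theorem \ref{Th42} (namely \cite[Theorem~4.1]{AFLM}) supplies a measure space $\Omega$, a positive isometric isomorphism $U$ of $L^2(\Omega)$ independent of $k$ (Remark \ref{Positive}), and bounded operators $J_k,Q_k$ with $T_k^{n_k}=Q_k(U\overline{\otimes}I_H)^{n_k}J_k$; one sets $V_k=U$. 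For hypothesis~(2): by assumption~(ii), $R_{d-1}=S^{-1}T_{d-1}S$ and $R_d=S^{-1}T_dS$ are commuting contractions on $H$, so Ando's theorem \cite{An} (see also \cite[Theorem~1.2]{P}) yields a Hilbert space $Y\supseteq H$ and commuting unitaries $W_{d-1},W_d$ on $Y$ with $R_{d-1}^{n_{d-1}}R_d^{n_d}=P_HW_{d-1}^{n_{d-1}}W_d^{n_d}|_H$ for all $(n_{d-1},n_d)\in\mathbb{N}^2$, where $P_H\colon Y\to H$ is the orthogonal projection; conjugating by $S$ and setting $J_{m+1}\colon x\mapsto S^{-1}x$ (with $H\subseteq Y$), $Q_{m+1}=SP_H$, and $V_k=W_k$ for $k\in\{d-1,d\}$ gives $T_{d-1}^{n_{d-1}}T_d^{n_d}=Q_{m+1}V_{d-1}^{n_{d-1}}V_d^{n_d}J_{m+1}$. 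Hypothesis~(3), that $J_iT_j=(I_{L^2(\Omega)}\overline{\otimes}T_j)J_i$ for all $i\le d-2$ and $j\le d$, is exactly the computation carried out in the proof of Theorem \ref{Th42}, which uses only that $T_i$ is a Ritt operator commuting with $T_j$ together with the explicit form of $J_i$. Lemma \ref{lem41} then produces bounded operators $J,Q$ and operators $U_1,\dots,U_d$ on the Hilbert space $K=L^2(\Omega^{d-2};Y)$ with $T_1^{n_1}\cdots T_d^{n_d}=QU_1^{n_1}\cdots U_d^{n_d}J$; by \eqref{Rk}--\eqref{Rk2} each $U_k$ is a tensor product of identity operators with one of $U,W_{d-1},W_d$, hence a unitary, and the $U_k$ commute. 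This proves (1).

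Assertion~(2) is then immediate: for $\phi\in\P_d$ one has $\phi(T_1,\dots,T_d)=Q\,\phi(U_1,\dots,U_d)\,J$, and since $U_1,\dots,U_d$ are commuting unitaries the joint spectral theorem together with the maximum principle gives $\|\phi(U_1,\dots,U_d)\|\le\sup_{\zeta\in\mathbb{T}^d}|\phi(\zeta)|=\|\phi\|_{\infty,\mathbb{D}^d}$; taking $C=\max\{1,\|Q\|\,\|J\|\}$ yields \eqref{calccontr}.

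The delicate point is assertion~(3): estimate~(2) alone does not suffice, since polynomial boundedness does not force similarity to a contraction already when $d=1$, and there is no multivariable Ando theorem for $d\ge3$, so (3) must again be drawn from the unitary dilation, now at the matrix level. Since $f\mapsto f(U_1,\dots,U_d)$ is a unital $*$-representation of $C(\mathbb{T}^d)$ and hence completely contractive, the unital homomorphism $\pi\colon A(\mathbb{D}^d)\to B(H)$ determined by $\pi(\phi)=\phi(T_1,\dots,T_d)$ for $\phi\in\P_d$ (well defined and bounded on the polydisc algebra by~(2), as $\P_d$ is dense in it) satisfies $\|\pi\|_{cb}\le\|Q\|\,\|J\|$, because $[\pi(\phi_{ij})]=(I_n\otimes Q)\,[\phi_{ij}(U_1,\dots,U_d)]\,(I_n\otimes J)$ for every $n\ge1$ and every $[\phi_{ij}]\in M_n(\P_d)$. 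By Paulsen's similarity theorem (see e.g.\ \cite{P}) there is an invertible $S_0\in B(H)$ with $S_0^{-1}\pi(\cdot)S_0$ completely contractive, and applying this to the coordinate functions $z_1,\dots,z_d$, each of norm $1$ in $A(\mathbb{D}^d)$, gives $\|S_0^{-1}T_jS_0\|\le1$ for every $j$, which is (3). The main work is thus in fitting Ando's theorem into Lemma \ref{lem41} as the last block, and in recognising that (3) needs the completely bounded polydisc functional calculus rather than merely the norm estimate~(2).
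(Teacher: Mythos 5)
Your proposal is correct, and for parts (1) and (2) it is essentially the paper's own argument: the dilation blocks for $T_1,\ldots,T_{d-2}$ come from \cite[Theorem 8.1]{LM} plus the construction recalled in the proof of Theorem \ref{Th42}, the pair $(T_{d-1},T_d)$ is handled by conjugating Ando's theorem \cite{An} by the similarity, the intertwining hypothesis (3) of Lemma \ref{lem41} is checked exactly as in Theorem \ref{Th42}, and (2) follows from (1) by the spectral theorem for commuting unitaries. Where you genuinely diverge is in assertion (3). The paper stays at the level of the dilation (\ref{combcontr}): it applies \cite[Proposition 2.4]{AFLM} to the two representations of the semigroup $(\mathbb{N}^d,+)$ given by $(n_1,\ldots,n_d)\mapsto T_1^{n_1}\cdots T_d^{n_d}$ and $(n_1,\ldots,n_d)\mapsto U_1^{n_1}\cdots U_d^{n_d}$, producing $\rho$-invariant subspaces $M\subset N\subset K$ and an isomorphism $H\cong N/M$ under which each $T_k$ becomes the (contractive) compression of $U_k$; a final unitary identification of $N/M$ with $H$ yields the common similarity. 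You instead upgrade (2) to a completely bounded unital homomorphism of the polydisc algebra, using the matricial factorisation through the $*$-representation $f\mapsto f(U_1,\ldots,U_d)$ of $C(\mathbb{T}^d)$, and then invoke Paulsen's similarity theorem and evaluate at the coordinate functions. This is a valid argument (and your cb-norm estimate $\|\pi\|_{cb}\leq\|Q\|\,\|J\|$ is exactly what the dilation provides); it is in fact the route the paper alludes to at the end of Section \ref{Hilbert}, where it notes that the semigroup analogues were proved in \cite{LM96} via complete boundedness and Paulsen's theorem. The trade-off is that your proof imports operator space machinery, whereas the paper's compression argument via \cite[Proposition 2.4]{AFLM} is more elementary and self-contained given the references already in play; you were also right to flag that (2) alone cannot yield (3), which is precisely why both proofs must return to the dilation rather than to the scalar polynomial bound.
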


\begin{proof}
The proof of (1) will rely on Lemma \ref{lem41}. The Ritt operators 
$T_1,\ldots,T_{d-2}$ are similar to contractions hence
according to \cite[Theorem 8.1]{LM}, $T_k$ has an $H^\infty(B_{\gamma_k})$ functional calculus 
for some $\gamma_k$ in $(0,\frac{\pi}{2})$,
for all $k=1,\ldots,d-2$. The argument in the proof of Theorem \ref{Th42} shows that 
there exist a measure space $\Omega$, unitaries
$V_1,\ldots,V_{d-2}$ on $L^2(\Omega)$ and bounded operators
$$
J_1,\ldots,J_{d-2}\colon H\longrightarrow L^2(\Omega;H)
\qquad\hbox{and}\qquad
Q_1,\ldots,Q_{d-2}\colon L^2(\Omega;H)\longrightarrow H,
$$
such that for any $k=1,\ldots,d-2$, 
$$
T_k^{n_k}=Q_k(V_k\overline{\otimes} I_H)^{n_k}J_k,\qquad n_k\in\Ndb,
$$
and 
$$
J_kR=(I_{L^2(\Omega)}\overline{\otimes}R)J_k
$$
for any $R\colon H\to H$ commuting with $T_k$.
 
By assumption there exists an invertible 
$W\colon H \to H$
such that $W^{-1}T_{d-1}W$ and $W^{-1}T_dW$  are contractions.
By Ando's Theorem \cite{An},
there exist a Hilbert space $L$ containing $H$ as a closed subspace 
and two unitaries $V_{d-1}, V_d\colon L\to L$ such that
\begin{equation} 
(W^{-1} T_{d-1} W)^{n_{d-1}} (W^{-1} T_d W)^{n_d} = 
P_H V_{d-1}^{d-1} V_d^{n_d} J_H, \qquad (n_{d-1},n_d) \in \mathbb{N}^2,
\end{equation}
where $J_H \colon H \to L$ and $P_H=J_H^*\colon L\to H$
denote the inclusion map and the orthogonal projection, respectively.
This can be written as
\begin{equation} \label{Ando}
T_{d-1}^{n_{d-1}} T_d^{n_d} = 
Q_{d-1} V_{d-1}^{d-1} V_d^{n_d} J_{d-1}, \qquad (n_{d-1},n_d) \in \mathbb{N}^2,
\end{equation}
with $Q_{d-1} = W P_H$ and $J_{d-1} = H_H W^{-1}$.

We can therefore apply Lemma \ref{lem41} to $(T_1,\ldots,T_d)$
with $m=d-2$ and $Y=L$. Thus 
there exist two bounded operators 
$J \colon H \to L_2(\Omega^{d-2};L)$ and 
$Q \colon L_2(\Omega^{d-2};L) \to H$, as well as operators $U_1,\ldots,U_d$ 
on $L_2(\Omega^{d-2};L)$ such that
\begin{equation} \label{combcontr}
T_1^{n_1}...T_d^{n_d} = Q U_1^{n_1} \cdots U_d^{n_d} J, 
\qquad (n_1,\ldots,n_d) \in \mathbb{N}^d
\end{equation} 
and the operators $U_k$ are given by
\begin{align*}
& U_k = I^{\otimes k-1} ~\overline{\otimes}~ V_k ~\overline{\otimes} 
I^{\otimes d-2-k} ~\overline{\otimes} ~I_L,  &  k=1,\ldots,d-2; \\
& U_k = I^{\otimes d-2} \overline{\otimes}~ U_k, & k=d-1,d.
\end{align*}
Clearly $K=L_2(\Omega^{d-2};L)$ is a Hilbert space and $U_1,\ldots,U_d$
are commuting unitaries. This shows (1).

(2) is a direct consequence of (1). Indeed for any $\phi\in \mathcal{P}_d$,
(1) implies 
\begin{equation}
\left\| \phi(T_1,\ldots,T_d) \right\| \leq \left\| Q \right\| 
\left\| J \right\| \left\| \phi(U_1,\ldots,U_d) \right\|,
\end{equation}
and by the functional calculus of unitary operators,
\begin{equation} \label{calcunit}
\left\| \phi(U_1,\ldots,U_d) \right\| \leq \norm{\phi}_{\infty,\Ddb^d}.
\end{equation}

We turn now to the proof of (3). 
We appeal to \cite[Proposition 2.4]{AFLM}. 
Consider the algebraic semigroup $\mathcal{G} = 
(\mathbb{N}^d,+)$ and its representations 
\begin{equation}
\pi\colon \begin{array}{llc} \mathcal{G} & \to & B(H) \\ 
(n_1,\ldots,n_d) & \mapsto & T_1^{n_1} \cdots T_d^{n_d} \end{array} 
\quad\hbox{and}\quad \rho \colon \begin{array}{llc} \mathcal{G} & \to & B(K) \\ 
(n_1,\ldots,n_d) & \mapsto & U_1^{n_1} \cdots U_d^{n_d} \end{array},
\end{equation} 
where $K$ and $U_1,\ldots,U_d$ are provided by (1).

According to (\ref{combcontr}), we have two bounded operators 
$J \colon H \to K$ and $Q \colon K \to H$ such that 
\begin{equation}
\pi(n_1,\ldots,n_d) = Q \rho(n_1,\ldots,n_d) J, \qquad (n_1,\ldots,n_d) \in \mathcal{G}.
\end{equation}

Hence by \cite[Proposition 2.4]{AFLM}, there exist two $\rho$-invariant closed 
subspaces $M \subset N \subset K$, as well as 
an isomorphism $S\colon  H \to N/M$ such that the compressed representation 
$\tilde{\rho} \colon  \mathcal{G} \to B(N/M)$ satisfies
\begin{equation} \label{compr}
\pi(n_1,\ldots,n_d) = S^{-1} \tilde{\rho}(n_1,\ldots,n_d) S, 
\qquad (n_1,\ldots,n_d) \in \mathcal{G}.
\end{equation} 
For any $k=1,\ldots,d$, define $R_k \colon  N/M \to N/M$ 
by $R_k(\dot{x}) = \overbrace{U_k(x)}^{\bullet}$ for any $x\in N$,
where $\dot{x}$ denotes its class modulo $M$. Then $R_1,\ldots,R_d$
are contractions and (\ref{compr}) can be equivalenty written as
$$
T_1^{n_1} \cdots T_d^{n_d} = S^{-1} R_1^{n_1} \cdots R_d^{n_d} S,
\qquad (n_1,\ldots,n_d) \in \mathcal{G}.
$$
This implies that
$$
T_k = S^{-1} R_k S
$$
for any $k=1,\ldots, d$. By construction, $N/M$ is a Hilbert space. 
Since it is isomorphic to $H$, through $S$, it is isometrically isomorphic to
$H$. In other words, there exists a unitary $V\colon  N/M\to H$. The above 
identity can be written as
$$
T_k = S^{-1}V^* V R_kV^*VS
$$
for any $k=1,\ldots, d$. 
Now changing $S$ into $VS$ and $R_k$ into $V R_kV^*$, property
(3) follows at once.
\end{proof}

The next corollary is a straighforward consequence of the previous theorem.

Before stating it, we recall that 
Pisier showed in \cite{P4} the existence of a pair $(T_1,T_2)$ of commuting operators
on Hilbert space $H$ such that $T_1$ and $T_2$ are both similar to contractions 
(that is, there exist bounded invertible operators $S_1,S_2\colon  H\to H$
such that $S_1^{-1}T_1S_1$ and $S_2^{-1}T_2S_2$ are contractions) but there
is no  common bounded invertible $S\colon H\to H$ such that 
$S^{-1}T_1S$ and $S^{-1}T_2S$ are contractions.

\begin{corollary}\label{JFC-Hilbert}
Let $d\geq 2$ be an integer and let $(T_1,\ldots,T_d)$ be a 
commuting family of Ritt operators 
on Hilbert space $H$. The following assertions are equivalent.
\begin{itemize}
\item [(1)] $(T_1,\ldots,T_d)$ admits an $H^\infty(B_{\gamma_1}\times
\cdots\times B_{\gamma_d})$ functional calculus 
for some $\gamma_k\in(0,\frac{\pi}{2})$, $k=1,\ldots,d$.
\item [(2)] There exists a bounded invertible operator $S\colon H\to H$ such that
for any $k=1,\ldots,d$, $S^{-1}T_kS$ is a contraction.
\end{itemize}
\end{corollary}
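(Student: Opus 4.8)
The plan is to prove the two implications separately. For ``$(2)\Rightarrow(1)$'', suppose there is a bounded invertible $S\colon H\to H$ such that $R_k:=S^{-1}T_kS$ is a contraction for every $k$. Since the Ritt property is preserved under similarity --- the resolvent estimate (\ref{majRittbeta}) only gets multiplied by $\norm{S}\,\norm{S^{-1}}$ and the spectrum is unchanged --- each $R_k$ is a \emph{contractive} Ritt operator on the Hilbert space $H$. A contraction is trivially similar to a contraction, so \cite[Theorem 8.1]{LM} shows that each $R_k$ admits an $H^\infty(B_{\gamma_k'})$ functional calculus for some $\gamma_k'\in(0,\frac{\pi}{2})$. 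Since a Hilbert space is a Banach lattice, Theorem \ref{Th32} applies to the commuting $d$-tuple $(R_1,\ldots,R_d)$ and provides $\gamma_k\in(\gamma_k',\frac{\pi}{2})$ such that $(R_1,\ldots,R_d)$ has an $H^\infty(B_{\gamma_1}\times\cdots\times B_{\gamma_d})$ joint functional calculus; by Proposition \ref{Rittpolynome} this amounts to an estimate $\norm{\phi(R_1,\ldots,R_d)}\leq K\norm{\phi}_{\infty,B_{\gamma_1}\times\cdots\times B_{\gamma_d}}$ for all $\phi\in\P_d$. As $T_k=SR_kS^{-1}$ we have $\phi(T_1,\ldots,T_d)=S\,\phi(R_1,\ldots,R_d)\,S^{-1}$, so the same estimate holds for $(T_1,\ldots,T_d)$ with $K$ replaced by $\norm{S}\,\norm{S^{-1}}\,K$, and Proposition \ref{Rittpolynome} then yields (1).

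For ``$(1)\Rightarrow(2)$'' we exploit the joint functional calculus through a dilation. By (1) and the remarks at the end of Section \ref{FC}, each $T_k$ has an $H^\infty(B_{\gamma_k})$ functional calculus. Moreover $H$ is reflexive and both $H$ and $H^*$ have cotype $2$, so Theorem \ref{Th42} applied with $p=2$ produces a measure space $\Omega$, commuting isometric isomorphisms $U_1,\ldots,U_d$ on the Hilbert space $K:=L^2(\Omega;H)$ (hence commuting unitaries), and bounded operators $J\colon H\to K$ and $Q\colon K\to H$ with
\begin{equation*}
T_1^{n_1}\cdots T_d^{n_d}=Q\,U_1^{n_1}\cdots U_d^{n_d}\,J,\qquad (n_1,\ldots,n_d)\in\Ndb^d.
\end{equation*}
This exhibits the representation $\pi\colon(n_1,\ldots,n_d)\mapsto T_1^{n_1}\cdots T_d^{n_d}$ of $(\Ndb^d,+)$ on $H$ as a compression, via bounded operators, of the representation $\rho\colon(n_1,\ldots,n_d)\mapsto U_1^{n_1}\cdots U_d^{n_d}$ on $K$. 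Running the argument already used in the proof of Theorem \ref{Th51}(3) verbatim --- apply \cite[Proposition 2.4]{AFLM} to get $\rho$-invariant closed subspaces $M\subset N\subset K$ and an isomorphism onto $N/M$ conjugating $\pi$ to the compression $\widetilde\rho$, observe that the compressions of the $U_k$ to $N/M$ are commuting contractions, and finally identify $N/M$ isometrically with $H$ --- we obtain a bounded invertible operator $S\colon H\to H$ such that $S^{-1}T_kS$ is a contraction for every $k=1,\ldots,d$, which is (2).

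The only real difficulty is conceptual: one must produce a single invertible $S$ that simultaneously conjugates all the $T_k$ to contractions, and, by Pisier's example recalled above, this cannot be obtained by combining the $d$ individual similarities furnished by \cite[Theorem 8.1]{LM}. The point of the argument is that it is the \emph{joint} functional calculus --- via the simultaneous unitary dilation of Theorem \ref{Th42} --- that makes the common similarity available; once the joint unitary dilation is in hand the compression step of Theorem \ref{Th51}(3) does the rest, and all the remaining manipulations (stability of the Ritt class under similarity, transfer of polynomial bounds, Proposition \ref{Rittpolynome}) are routine.
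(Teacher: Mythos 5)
Your proof is correct, and both directions go through. For comparison: the paper treats the corollary as a direct consequence of Theorem \ref{Th51} — under (1) each $T_k$ has an individual $H^\infty(B_{\gamma_k})$ calculus, hence by \cite[Theorem 8.1]{LM} is similar to a contraction, and Theorem \ref{Th51}(3) (applied, for instance, to the commuting tuple obtained by appending identity operators so that hypothesis (ii) is trivially satisfied) yields the single invertible $S$; the converse is exactly your argument via \cite[Theorem 8.1]{LM}, Theorem \ref{Th32} and Proposition \ref{Rittpolynome}, since it is not contained in Theorem \ref{Th51}. You instead bypass the statement of Theorem \ref{Th51} for ``(1) $\Rightarrow$ (2)'' and re-run its proof in the special case at hand: because all $d$ operators are Ritt with an $H^\infty$ calculus, Theorem \ref{Th42} with $p=2$ dilates the whole tuple at once to commuting unitaries on $L^2(\Omega;H)$, and the compression argument of \cite[Proposition 2.4]{AFLM} then produces the common similarity; Ando's theorem, which Theorem \ref{Th51} needs only because two of its operators are not assumed Ritt, plays no role. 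So the two routes rest on the same machinery (joint unitary dilation plus compression), yours being self-contained modulo Theorem \ref{Th42}, the paper's being shorter by quoting Theorem \ref{Th51} as a black box. One cosmetic point in your ``(2) $\Rightarrow$ (1)'': rather than saying that $H$ ``is a Banach lattice'', it is cleaner to note that $H$ is isometric to some $\ell^2(I)$, or simply that $H$ has property $(\alpha)$, so that the hypotheses of Theorem \ref{Th32} are met.
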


We finally mention that 
Theorem \ref{Th51} and Corollary \ref{JFC-Hilbert}
have semigroup versions, that can be obtained by adapting the
previous arguments. However we omit their statement as they were already 
proved in the paper \cite{LM96} (by using the notion 
of complete boundedness and Paulsen's similarity Theorem).

\section{Appendix: The Franks-McIntosh decomposition on Stolz domains} \label{Appendix}

In this section we provide a detailed proof of the 
Franks-McIntosh decomposition on Stolz domains used in Section \ref{Automatic}. 
As indicated in the Introduction,
this result is implicit in \cite[Section 4]{FMI}, however 
no proof has been written yet. 
The one we provide here is close to the one for sectors given in \cite[Section 3]{FMI}, 
and much simpler that the one which is sketched in \cite[Section 4]{FMI} 
for domains having several points of contact.

\begin{theorem}\label{FranksMcIntoshplusvariables}
Let $d \geq 1$ be an integer, let $\beta_k$ in $( 0, \frac{\pi}{2} )$ 
and $\alpha_k$ in $(0,\beta_k)$, $k=1,\ldots,d$.
There exist sequences $(\Psi_{k,i_k})_{i_k\geq 1}$ and 
$(\tilde{\Psi}_{k,i_k})_{i_k\geq 1}$ in 
$H_0^\infty(B_{\alpha_k})$ verifying the following properties.
\begin{itemize}
\item [(1)] 
For every real number $p>0$ and for any
$k=1,\ldots,d$, 
\begin{equation} \label{majphiij}
\sup\left\lbrace \sum_{i_k = 1}^{\infty} \left| \Psi_{k,i_k}(\zeta_k)\right|^p \, :\, 
\zeta_k \in B_{\alpha_k} \right\rbrace \,<\infty\quad
\hbox{and}\quad \sup\left\lbrace \sum_{i_k = 1}^{\infty} \left| 
\tilde{\Psi}_{k,i_k}(\zeta_k)\right|^p : \zeta_k \in B_{\alpha_k} 
\right\rbrace \,<\infty.
\end{equation}
\item [(2)] 
There exists a constant $C>0$ such that 
for every $h$ in $H^\infty(B_{\beta_1} \times \cdots \times B_{\beta_d})$, 
there exists a 
family $(a_{i_1,\ldots,i_d})_{i_1,\ldots,i_d\geq 1}$ of complex numbers
such that 
\begin{equation} \label{majalphaij}
\left| a_{i_1,\ldots,i_d} \right| \leq C \left\| h 
\right\|_{\infty, B_{\beta_1} \times \cdots \times B_{\beta_d}},
\qquad (i_1,\ldots,i_d)\in \mathbb{N^*}^d,
\end{equation}
and for every $(\zeta_1,\ldots,\zeta_d) $ in $\prod_{k=1}^d B_{\alpha_k}$,
\begin{equation} \label{FMdvarialbes}
h(\zeta_1,\ldots,\zeta_d) = \sum_{i_1,\cdots,i_d\geq 1} a_{i_1,\ldots,i_d} 
\Psi_{1,i_1}(\zeta_1)\tilde{\Psi}_{1,i_1}(\zeta_1) 
\cdots \Psi_{d,i_d}(\zeta_d)\tilde{\Psi}_{d,i_d}(\zeta_d).
\end{equation}
\end{itemize}
\end{theorem}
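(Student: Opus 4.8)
The plan is to reduce the $d$-variable statement to the one-variable case and then to prove the latter by transcribing, with simplifications, the argument of \cite[Section 3]{FMI} for sectors. The decisive simplification is that a Stolz domain $B_\gamma$ is \emph{bounded} and meets the unit circle at the \emph{single} point $\lambda=1$; hence only one ``end'' must be resolved and the scale decomposition below is one-sided, which is why the proof is shorter than the multi-contact case sketched in \cite[Section 4]{FMI}.

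\textbf{Reduction to $d=1$.} I would arrange that the one-variable construction produces its coefficients as fixed \emph{integral} functionals of $h$, namely $a_i(h)=\tfrac{1}{2\pi i}\int_{\partial B_{\beta_1'}} h(w)\,k_i(w)\,dw$ for some intermediate $\beta_1'\in(\alpha_1,\beta_1)$ and explicit kernels $k_i$ with $\int_{\partial B_{\beta_1'}}|k_i|\,|dw|\le C$. Granting this, for $h\in H^\infty(B_{\beta_1}\times\cdots\times B_{\beta_d})$ one freezes $(\zeta_2,\dots,\zeta_d)$, applies the $d=1$ decomposition in the first variable, and observes that $a_{i_1}(\zeta_2,\dots,\zeta_d):=\tfrac{1}{2\pi i}\int_{\partial B_{\beta_1'}} h(w,\zeta_2,\dots,\zeta_d)\,k_{i_1}(w)\,dw$ is holomorphic in $(\zeta_2,\dots,\zeta_d)$ (differentiation under the integral sign) and satisfies $|a_{i_1}|\le C\|h\|_\infty$; iterating over the remaining variables yields $|a_{i_1,\dots,i_d}|\le C^d\|h\|_\infty$ together with the expansion (\ref{FMdvarialbes}) on $\prod_k B_{\alpha_k}$, while property (1) is untouched since the $\Psi_{k,i_k},\tilde\Psi_{k,i_k}$ are tensored unchanged.

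\textbf{The case $d=1$.} Via $\lambda\mapsto 1-\lambda$ one works on the bounded domain $\Omega_\beta:=1-B_\beta\subset\Sigma_\beta$, which agrees with the sector $\Sigma_\beta$ near $0$ and is ``fat'' away from $0$; then $H_0^\infty$ corresponds to boundedness together with $|g(\mu)|\lesssim|\mu|^s$ near $0$. Fix a pair $\psi,\tilde\psi\in H_0^\infty(\Sigma_\beta)$, e.g. $\psi(\mu)=\tilde\psi(\mu)=\mu/(1+\mu)^2$, normalised so $\int_0^\infty\psi(s)\tilde\psi(s)\tfrac{ds}{s}=1$; rotating the ray of integration (using holomorphy and decay of $\psi\tilde\psi$ on $\Sigma_\beta$) shows $\int_0^\infty\psi\big((1-\zeta)/t\big)\tilde\psi\big((1-\zeta)/t\big)\tfrac{dt}{t}=1$ on $B_\beta$, and combining this with Cauchy's formula applied to $w\mapsto h(w)\psi((1-w)/t)$ (after a harmless approximation of $B_{\alpha'}$ near the contact point, using boundedness of $h$ and the vanishing of $\psi$ at $0$) gives, for $\zeta\in B_\alpha$ and $\alpha<\alpha'<\beta$, the reproducing formula $h(\zeta)=\tfrac{1}{2\pi i}\int_0^\infty\!\int_{\partial B_{\alpha'}}\tfrac{h(w)\,\psi((1-w)/t)}{w-\zeta}\,\tilde\psi\big((1-\zeta)/t\big)\,dw\,\tfrac{dt}{t}$. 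One splits the $t$-integral dyadically; since $\Omega_\beta$ is bounded, the scales with $t$ beyond a fixed multiple of its diameter contribute only boundedly many ``coarse'' terms, so the scale index $n$ effectively runs over $n\le N_0$. Within each dyadic block one expands the Cauchy kernel and discretises the angular (Mellin) variable: the rescaled localised piece of $h$ attached to scale $2^n$ extends holomorphically to $\Sigma_{\beta'}$ with $\alpha'<\beta'<\beta$, so its Mellin coefficients in the variable $(1-w)^{i\tau}$ decay exponentially in $|\tau|$ and are $\lesssim\|h\|_\infty$. Assembling, one obtains $h(\zeta)=\sum_{n\le N_0}\sum_j a_{n,j}\,\Psi_{n,j}(\zeta)\tilde\Psi_{n,j}(\zeta)$, where (after a fixed normalisation carrying a factor geometric in $n$ and exponential in $|\tau_j|$) $\Psi_{n,j}(\zeta)$ is a constant multiple of $\psi\big((1-\zeta)/2^n\big)(1-\zeta)^{i\tau_j}$ and $\tilde\Psi_{n,j}$ the analogue with $\tilde\psi$; both lie in $H_0^\infty(B_\alpha)$, since $(1-\zeta)^{i\tau_j}$ is bounded on $B_\alpha$ and $\psi$ vanishes at $0$. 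Property (1) follows because, for fixed $\zeta$, the sequence $n\mapsto|\psi((1-\zeta)/2^n)|$ is a fixed rapidly decaying function of $n-\log_2|1-\zeta|$, so $\sum_n|\psi((1-\zeta)/2^n)|^p\lesssim_p 1$ uniformly in $\zeta$, and the exponential normalisation makes $\sum_j$ converge; likewise for $\tilde\Psi_{n,j}$. Property (2) is then read off: the exponential/geometric gain absorbed into the $\Psi$'s leaves $|a_{n,j}|\lesssim\|h\|_\infty$. Re-indexing $(n,j)$ as a single $i\in\mathbb{N}^*$ finishes $d=1$, and by the first step the theorem.

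\textbf{Main obstacle.} The delicate part is entirely in the one-variable decomposition: making the continuous Calderón resolution interact cleanly with the Cauchy kernel $\tfrac{1}{w-\zeta}$, carefully separating the boundedly many coarse (large-scale) contributions coming from the ``fat'' end of $\Omega_\beta$ from the infinite one-sided family accumulating at the contact point, and then proving absolute convergence of the resulting double (scale $\times$ Mellin) series after rearrangement. Here the exponential decay of the Mellin coefficients — itself a consequence of the holomorphic extension of $h$ to a slightly larger Stolz domain — together with the geometric decay in the scale are exactly what is needed to close (\ref{majphiij}) and (\ref{majalphaij}). A secondary but essential point, already invoked in the reduction, is to keep the coefficients $a_i$ as genuine contour-integral functionals of $h$, so that freezing the other variables preserves holomorphy and the uniform bound.
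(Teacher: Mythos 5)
Your reduction of the $d$-variable statement to the case $d=1$ --- realising the coefficients as fixed contour-integral functionals of $h$, so that freezing the other variables preserves holomorphy and the uniform bound, and then iterating --- is exactly what the paper does (Proposition \ref{decompFranksMcIntosh} together with the iteration at the end of Section \ref{Appendix}), so that part is sound. For the one-variable core, however, you replace the paper's construction (dyadic partition of $\partial B_\mu$ accumulating at $1$, orthonormal polynomial systems on each piece, the kernel $K(z,\zeta)=(1-z)^{1/2}(1-\zeta)^{1/2}/(z-\zeta)$ with its $\rho^{-|k-r|/2}$ and $2^{-j}$ decays, and finally the inner--outer factorisation of Remark \ref{InnOut1} which produces the product form $\Psi_i\tilde\Psi_i$) by a continuous Calder\'on resolution combined with Cauchy's formula and a ``Mellin discretisation'', and it is precisely at that point that your argument has a genuine gap: the passage from the continuous representation to a countable sum $\sum_{n,j}a_{n,j}\Psi_{n,j}\tilde\Psi_{n,j}$ with $h$-independent $\Psi_{n,j},\tilde\Psi_{n,j}$ and scalar coefficients bounded by $\norm{h}_{\infty}$ is asserted, not constructed, and the two expansions you invoke do not combine as claimed. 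A Mellin separation of the Cauchy kernel $1/(w-\zeta)$ --- the only mechanism that would make the $\zeta$-dependence come out literally as $\psi((1-\zeta)/2^n)(1-\zeta)^{i\tau_j}$ --- leaves a \emph{continuous} parameter $\tau$, hence no countable family; whereas expanding the localised piece of $h$ on each dyadic boundary block in a discrete Fourier/Mellin system does give countably many coefficients $\lesssim\norm{h}_\infty$ (with the exponential decay you describe), but then the attached $\zeta$-functions are contour integrals of the type $\int_{\gamma_n}e_{n,j}(w)(w-\zeta)^{-1}\psi((1-w)/t)\,dw$, not your closed form, and in particular they are not manifestly products of two $H^\infty_0(B_\alpha)$ functions: some factorisation step (the paper's inner--outer argument in Remark \ref{InnOut1}) is then indispensable and has no counterpart in your proposal. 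This separation-plus-discretisation-plus-factorisation is the whole content of the theorem, and you flag it as the ``main obstacle'' without supplying the missing idea.

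Two further points in your sketch do not survive scrutiny as written. First, the large scales cannot be dismissed as ``boundedly many coarse terms'': by Cauchy's formula the tail of the Calder\'on integral equals $h(\zeta)\int_{T_0}^{\infty}\psi\bigl(\tfrac{1-\zeta}{t}\bigr)\tilde\psi\bigl(\tfrac{1-\zeta}{t}\bigr)\tfrac{dt}{t}=h(\zeta)\,\Theta(\zeta)$ for a fixed $\Theta\in H^\infty_0(B_\alpha)$, which is \emph{not} a finite sum of separated terms $a\,\Psi\tilde\Psi$ with scalar $a$; the correct repair is to expand all scales $n\in\mathbb{Z}$ uniformly, the blocks beyond the diameter simply carrying geometric decay. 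Second, when you absorb an ``exponential normalisation'' into $\Psi_{n,j}$ you must verify a quantitative balance: $\sup_{\zeta\in B_\alpha}\vert(1-\zeta)^{i\tau}\vert=e^{\alpha\vert\tau\vert}$, so property (\ref{majphiij}) forces the absorbed factor to decay at a rate strictly larger than $\alpha$, while (\ref{majalphaij}) requires the leftover decay (rate governed by the intermediate angle $\alpha'>\alpha$ on the $w$-side, plus the Paley--Wiener gain from extension to $B_\beta$) to keep $\vert a_{n,j}\vert\lesssim\norm{h}_\infty$; this does work out, but it is never checked, and it is exactly the kind of bookkeeping the paper's explicit estimates (Lemma \ref{seriephi} and (\ref{CSalphamkj})) are there to secure.
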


The main part of the proof will consist in showing the 
following one-variable result.

\begin{proposition} \label{decompFranksMcIntosh}
Let $0<\alpha<\beta<\frac{\pi}{2}$. There exist a sequence 
$(\Phi_i)_{i\geq 1}$ in $H^{\infty}_0(B_\alpha)$ and a constant $C>0$
such that 
$$
\sup\left\lbrace \sum_{i = 1}^{\infty} \left| \Phi_{i}(\zeta)\right|^p \, :\, 
\zeta \in B_{\alpha} \right\rbrace \,<\infty
$$
for any $p>0$, and 
for any $h\in H^\infty(B_\beta)$, there exists a sequence 
$(a_i)_{i\geq 1}$ of complex numbers such that
$\vert a_i\vert\leq C\norm{h}_{\infty,B_\beta}$ for any $i\geq 1$
and
\begin{equation} \label{decomph}
h (\zeta) = \sum_{i=1}^\infty a_{i} \Phi_{i}(\zeta),\qquad \zeta\in B_\alpha.
\end{equation}
\end{proposition}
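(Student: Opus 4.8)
The plan is to reduce Proposition \ref{decompFranksMcIntosh} to a Calderón‑type reproducing formula on an ``inverted'' domain and then to discretise it. First I would pass to the variable $z=1-\zeta$, which carries $B_\beta$ (resp. $B_\alpha$) to the bounded domain $1-B_\beta$ (resp. $1-B_\alpha$), whose only boundary singularity is the corner at $0$; one has $1-B_\alpha\subset 1-B_\beta\subset\Sigma_\beta$, and $1-B_\gamma$ coincides with a sector of half‑angle $\gamma$ near $0$ while being relatively compact away from it. A function $h\in H^\infty(B_\beta)$ corresponds to $\widehat h\in H^\infty(1-B_\beta)$, $\widehat h(z)=h(1-z)$, and an element of $H^\infty_0(B_\alpha)$ corresponds to a bounded holomorphic function on $1-B_\alpha$ vanishing polynomially at $0$. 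All the work then takes place on these truncated‑sector domains, which is where the single point of contact makes the situation much simpler than in \cite[Section 4]{FMI} and close to the sector case of \cite[Section 3]{FMI}.

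Next I would fix auxiliary functions $\psi,\widetilde\psi\in H^\infty_0(\Sigma_{\beta''})$ for some $\beta''\in(\alpha,\pi)$, with prescribed polynomial decay at $0$ and at $\infty$ of arbitrarily high order, chosen so that the \emph{exact} discrete Calderón identity $\sum_{k\in\mathbb Z}\psi(r^{k}v)\,\widetilde\psi(r^{k}v)=1$ holds for all $v\in\Sigma_{\beta''}$, where $r>1$ is a fixed dilation ratio; the substitution $v=e^{w}$ turns $\Sigma_{\beta''}$ into a horizontal strip and reduces the construction of such $\psi,\widetilde\psi$ to a routine Paley–Wiener choice of a band‑limited weight. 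Setting $\Phi_k(\zeta):=\psi(r^{k}(1-\zeta))\,\widetilde\psi(r^{k}(1-\zeta))$, $k\in\mathbb Z$ (dilates resolving the corner $\zeta=1$), and reindexing the family as $(\Phi_i)_{i\ge1}$, each $\Phi_i$ lies in $H^\infty_0(B_\alpha)$, and property (1) — the uniform bound on $\sum_i|\Phi_i(\zeta)|^{p}$ for every $p>0$ — is immediate from $|\psi(v)|,|\widetilde\psi(v)|\lesssim\min(|v|^{s},|v|^{-s})$ together with the convergence of $\sum_{k\in\mathbb Z}\min(r^{ks},r^{-ks})^{p}$; note the decomposition's building blocks are fixed, independent of $h$.

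It then remains to produce, for each $h\in H^\infty(B_\beta)$, scalars $a_i$ with $|a_i|\lesssim\|h\|_{\infty,B_\beta}$ and $h=\sum_i a_i\Phi_i$ on $B_\alpha$. Starting from the exact expansion $h=\sum_k h\,\Phi_k$ supplied by the discrete Calderón identity, the idea (this is the Franks–McIntosh step) is to replace the factor $h$ in the $k$‑th term by a scalar $a_k$ which is a suitable local average of $\widehat h$ at scale $r^{-k}$, defined by a Cauchy integral of $\widehat h$ over $\partial(1-B_\nu)$ for an intermediate angle $\nu\in(\alpha,\beta)$ against a bump concentrated at that scale; the resulting one‑step error $\sum_k(h-a_k)\Phi_k$ is then estimated by a Cauchy/Schur‑type argument using the decay of $\psi,\widetilde\psi$ and of $\widehat h\,'$ away from $\partial(1-B_\beta)$, so that it is a strict contraction in sup‑norm. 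Because each $\Phi_k$ is a fixed element of $H^\infty_0$, the remainder $h-\sum_k a_k\Phi_k$ again belongs to $H^\infty(B_\beta)$; iterating the procedure and summing the resulting geometric series of coefficients yields the exact identity with $|a_i|\lesssim\|h\|$, and a final application of dominated convergence (passing through $rT$, $r\to1^-$ style limits on the domain) legitimises the rearrangements. The main obstacle — and the only genuinely delicate point — is precisely this error control: arranging the local averages $a_k$ (and, if needed, the quadrature used to define them) so that the one‑step error contracts while the family $(\Phi_i)$ stays fixed and $\ell^{p}$‑summable for all $p$; here the fact that $B_\alpha$ has a single corner (only one ``end'' requires a dyadic resolution, the rest of the contour being compact and absorbed into finitely many terms) is exactly what keeps the estimate elementary.
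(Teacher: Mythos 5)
Your construction of the building blocks via a discrete Calder\'on identity is fine as far as property (1) goes, but the heart of your argument --- the ``Franks--McIntosh step'' in which you replace $h$ by a scalar local average $a_k$ at each scale and claim that the one-step error $\sum_k (h-a_k)\Phi_k$ is a strict contraction in sup-norm --- has a genuine gap, and I do not believe it can be repaired in this form. Your family $(\Phi_k)_k$ is indexed by the scale alone: each $\Phi_k(\zeta)=\psi(r^k(1-\zeta))\tilde\psi(r^k(1-\zeta))$ is a single fixed function living on the annular region $\{|1-\zeta|\sim r^{-k}\}\cap B_\alpha$, whose angular extent is comparable to its distance to $\partial B_\beta$. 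Hence the oscillation of a generic $h\in H^\infty(B_\beta)$ over that region is comparable to $\|h\|_{\infty,B_\beta}$, with a constant that cannot be made small by adjusting $r$, $\psi$ or $\tilde\psi$ (shrinking $r$ thins the annuli radially but not angularly). Concretely, for $h(\zeta)=(1-\zeta)^{ic}$ with $c$ large --- bounded on $B_\beta$, and the standard test function in this circle of ideas --- no choice of scalars $a_k$ makes $\sup_{B_\alpha}\bigl|\sum_k(h-a_k)\Phi_k\bigr|$ small compared with $\|h\|_\infty$, because the phase of $h$ rotates by a fixed large amount along each annular arc while $\sum_k|\Phi_k|\approx 1$ there. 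Moreover, even if one had some smallness, the iteration does not close: your coefficients $a_k$ are Cauchy integrals over $\partial(1-B_\nu)$ with $\nu\in(\alpha,\beta)$, so each step needs the remainder to be holomorphic and controlled on a domain strictly larger than the one on which the previous error was estimated; you lose an angle at every iteration, and summing the ``geometric series'' is not legitimate.

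This is precisely why the paper (following Franks--McIntosh for sectors) uses a \emph{two}-parameter family and no iteration at all: the boundary $\partial B_\mu$ (for an intermediate $\mu\in(\alpha,\beta)$) is cut into dyadic arcs $\gamma_{m,k}$ accumulating at $1$, and on each arc one takes an orthonormal basis $(e_{m,k,j})_j$ of polynomial functions in $L^2\bigl(\gamma_{m,k},|dz/(1-z)|\bigr)$; the blocks are $\Phi_{m,k,j}(\zeta)=\frac{1}{2\pi i}\int_{\gamma_{m,k}}\overline{e_{m,k,j}(z)}\,K(z,\zeta)\,\frac{dz}{1-z}$ with the modified Cauchy kernel $K(z,\zeta)=(1-z)^{1/2}(1-\zeta)^{1/2}/(z-\zeta)$, and the coefficients are the exact $L^2$-expansion coefficients of $h$ on each arc, bounded by $C\|h\|_\infty$ via Cauchy--Schwarz. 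The extra index $j$ (with the decay $2^{-j}\rho^{-|k-r|/2}$ coming from a power-series/Bessel argument) is exactly what absorbs the angular oscillation of $h$ at each scale that a single scalar per scale cannot capture; the decomposition is then obtained in one step from the Cauchy formula, so the contraction-and-iteration issue never arises. If you want to keep your Calder\'on-type setup, you would have to enrich it by a second index playing the role of $j$ (e.g.\ local polynomial or Fourier corrections at each scale), at which point you essentially recover the paper's argument.
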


\begin{remark}\label{InnOut1} 
Since $B_\alpha$ is a simply connected domain bounded by
a rectifiable Jordan curve, any element of $H^\infty(B_\alpha)$ 
admits boundary values. Further for any $\Phi\in H^\infty_0(B_\alpha)$,
there exist $\Psi,\tilde{\Psi}$ in $H^\infty_0(B_\alpha)$ such that
\begin{equation}\label{InnOut2}
\vert \Psi(\zeta)\vert = \vert \tilde{\Psi}(\zeta)\vert =
\vert \Phi(\zeta)\vert^{\frac12},\qquad \zeta\in\partial B_\alpha.
\end{equation}
Indeed given $\Phi\in H^\infty_0(B_\alpha)$, there exists $s>0$ and
$F\in H^\infty(B_\alpha)$ such that $(1-\zeta)^s\Phi(\zeta)=
F(\zeta)$ for any $\zeta\in B_\alpha$. Then using inner-outer factorisation, we may write
$F=\varphi\tilde{\varphi}$ with $\vert\varphi\vert=
\vert\tilde{\varphi}\vert = \vert F\vert^{\frac12}$ on the boundary
of $B_\alpha$. Then we obtain (\ref{InnOut2}) by taking 
$\Psi(\zeta)=(1-\zeta)^{\frac{s}{2}} \varphi(\zeta)$ and
$\tilde{\Psi}(\zeta)=(1-\zeta)^{\frac{s}{2}} \tilde{\varphi}(\zeta)$.

Combining the above factorization property with Proposition 
\ref{decompFranksMcIntosh}, we immediately obtain 
Theorem \ref{FranksMcIntoshplusvariables}
in the case $d=1$.
\end{remark}

Before proceeeding to the proof of Proposition 
\ref{decompFranksMcIntosh}, we need some preliminary constructions.
We fix some $0<\alpha<\mu<\beta<\frac{\pi}{2}$.

We let $\Gamma_0$ 
denote the arc of the circle centered at $0$ with radius $\sin(\mu)$,
joining  $\sin(\mu) e^{i \left(\frac{\pi}{2}-\mu \right)}$
to $\sin(\mu) e^{i \left(\mu-\frac{\pi}{2}\right)}$ counterclockwise. 
Then we let $\Gamma_1$ and  $\Gamma_2$ denote the segments joining 
$1$ to $\sin(\mu) e^{i \left( \frac{\pi}{2} - \mu \right)}$ 
and $\sin(\mu) e^{i \left( \mu - \frac{\pi}{2} \right)}$ to $1$, respectively.
Clearly $\Gamma_0$, $\Gamma_1$ and $\Gamma_2$ divide $\partial B_\mu$.

We divide $\Gamma_0$ into a finite number of arcs 
$\left\lbrace \gamma_{0,k}\right\rbrace_{k=0}^{N} $ with fixed length 
$\delta \leq \frac{1}{2} \text{dist}(\partial B_\alpha, \Gamma_0)$.
For any $0\leq k\leq N$,
we denote by $z_{0,k}$ the center of  $\gamma_{0,k}$
and we let $D_{0,k}$ be the open ball centered at $z_{0,k}$ with radius $\delta$.
Thus $D_{0,k}$ does not intersect $\partial B_\alpha$.

Let $l=\cos(\mu)$; this is the length of the segment $\Gamma_1$. We introduce the
sequence of segments 
\begin{equation*}
\gamma_{1,k} = \left\lbrace z \in \Gamma_1 
\, :\, l \rho^{-k-1} \leq \left| 1-z \right| \leq l \rho^{-k}\right\rbrace, \qquad
k\geq 0,
\end{equation*}
for some $\rho>1$ which will be chosen below. These segments divide $\Gamma_1$.
Let $z_{1,k}$ be the center of $\gamma_{1,k}$ and let $D_{1,k}$ be 
the open ball centered at $z_{1,k}$ with radius 
\begin{equation}\label{Radius}
s_k=l(\rho^{-k} - \rho^{-k-1}).
\end{equation}
We choose $\rho$ such that for every $k \geq 0$, the closure of $D_{1,k}$ 
does not intersect $\partial B_\alpha$.

We divide $\Gamma_2$ in the same manner by setting, for any $k\geq 0$,
\begin{equation*}
\gamma_{2,k} = \left\lbrace \overline{z} : z \in \gamma_{1,k} 
\right\rbrace, \qquad z_{2,k} = \overline{z_{1,k}} 
\qquad\hbox{and}\qquad 
D_{2,k} =  \left\lbrace \overline{z} : z \in D_{1,k} \right\rbrace.
\end{equation*}

For any $\zeta$ in $B_\alpha$ and any $z$ in the union of
$\cup_{k=0}^N D_{0,k}$, $\cup_{k=0}^{\infty} D_{1,k}$ and 
$\cup_{k=0}^{\infty} D_{2,k}$, 
we let
\begin{equation*}
 K(z,\zeta) = \dfrac{(1-z)^\frac{1}{2} (1-\zeta)^\frac{1}{2}}{z-\zeta}\,.
\end{equation*}
For $z,\zeta$ as above, elementary computations yield estimates
\begin{equation} \label{trigo}
|1 - \zeta | \lesssim | z-\zeta | \qquad 
\hbox{and}\qquad |1-z| \lesssim \left| z-\zeta \right|.
\end{equation}
We derive that for $m=1,2$ and for any $r\in\Ndb$, we have estimates
\begin{equation} \label{majKzzeta}
\text{sup} \left\lbrace \left| K(z,\zeta) \right| \,:\, 
z \in D_{m,k},\ \zeta\in B_\alpha,\ l \rho^{-r-1} \leq 
\left| 1- \zeta \right| \leq l \rho^{-r} \right\rbrace 
\lesssim \rho^{-\frac{\left| k-r \right|}{2}}.
\end{equation}
Indeed for $z,\zeta$ as above, we have 
$\left| 1 - z \right| \lesssim \rho^{-k}$,
$\left| 1 - \zeta \right| \lesssim \rho^{-r}$ and 
by (\ref{trigo}), we have $\rho^{-\min(k,r)} \lesssim \left| z-\zeta \right|$. 
These three estimates yield (\ref{majKzzeta}).

It readily follows from the above definitions that 
for $m=1,2$ and $k \geq 0$, we have
\begin{equation}\label{log}
\int_{\gamma_{m,k}} \Bigl\vert \frac{dz}{1-z} \Bigr\vert \,=\, {\rm log}(\rho).
\end{equation}

For $m=1,2$ and $k\geq 0$, we let 
$\left\lbrace e_{m,k,j} \right\rbrace_{j=0}^{\infty}$ be an orthonormal family of 
$L^2\bigl(\gamma_{m,k},\left| \frac{dz}{1-z} \right|\bigr)$ 
such that for any $n\in\Ndb$, ${\rm Span}\{e_{m,k,0},\ldots,e_{m,k,n}\}$
is equal to the subspace of polynomial functions with degree less than or equal to $n$.
Likewise, for $0\leq k\leq N$,
we let $\left\lbrace e_{0,k,j} \right\rbrace_{j=0}^{\infty}$ be an orthonormal family of 
$L^2\bigl(\gamma_{0,k},\left| \frac{dz}{z} \right|\bigr)$ 
such that for any $n\in\Ndb$, ${\rm Span}\{e_{m,k,0},\ldots,e_{m,k,n}\}$
is equal to the subspace of polynomial functions with degree less than or equal to $n$.

Next for any $m\in\{0,1,2\}$ and any $k\geq 0$
(with the convention that $k\leq N$ if $m=0$),
we define $\Phi_{m,k,j}
\colon B_\alpha\to \Cdb$ by
\begin{equation*}
\Phi_{m,k,j} (\zeta) = 
\dfrac{1}{2\pi i} 
\int_{\gamma_{m,k}} \overline{e_{m,k,j}(z)}\, K(z,\zeta) \, \dfrac{dz}{1-z},\qquad \zeta\in B_\alpha.
\end{equation*}
These functions are well defined holomorphic functions
belonging to $H^\infty_0(B_\alpha)$. Indeed according to the definition of $K$
and the Cauchy-Schwarz inequality, we have 
$$
\left| \Phi_{m,k,j}(\zeta) \right|  \leq \,\frac{1}{2\pi}
\left(\int_{\gamma_{m,k}} \left\vert K(z,\zeta)\right\vert^2\, 
\frac{\vert dz\vert}{\vert 1 -z\vert}\right)^{\frac12}
 \leq\, \frac{|1 - \zeta |^\frac{1}{2} }{2\pi}
\left(\int_{\gamma_{m,k}} 
\frac{\vert dz\vert}{\vert \zeta -z\vert^2}\right)^{\frac12}
\lesssim |1 - \zeta |^\frac{1}{2},
$$
since $\vert z-\zeta\vert\geq {\rm dist}(B_\alpha,\gamma_{m,k})>0$.

\begin{lemma} \label{seriephi}
There exists a constant $c>0$ such that if $\zeta \in B_\alpha$ 
satisfies 
\begin{equation}\label{H1}
l\rho^{-r-1} \leq \left| 1- \zeta \right| \leq l\rho^{-r}
\end{equation}
for some $r\in\Ndb$, then for any $k\geq 0$, $j\geq 1$ and $m=1,2$, we have
\begin{equation}\label{H2}
\left| \Phi_{0,k,j} (\zeta) \right| \leq c~~2^{-j} 
\qquad\hbox{and}\qquad 
\left| \Phi_{m,k,j} (\zeta) \right| \leq c~~ 2^{-j} 
\rho^{-\frac{\left| k-r \right|}{2}}.
\end{equation}
\end{lemma}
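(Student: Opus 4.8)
The plan is to recognise $\Phi_{m,k,j}(\zeta)$, for a fixed $\zeta\in B_\alpha$, as (a unimodular constant times $\tfrac1{2\pi}$ times) the $j$-th coefficient of the function $z\mapsto K(z,\zeta)$ with respect to the orthonormal system $\{e_{m,k,i}\}_{i\ge 0}$, and to bound that coefficient by exploiting that $z\mapsto K(z,\zeta)$ extends holomorphically — with a supremum controlled by (\ref{majKzzeta}) — to the disc $D_{m,k}$, whose radius is comparable to the length of $\gamma_{m,k}$.

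First I would carry out the reduction to a Fourier coefficient. On the segments $\gamma_{m,k}$, $m=1,2$, the differential $\frac{dz}{1-z}$ has constant argument: writing $\zeta_0=\sin(\mu)e^{i(\frac{\pi}{2}-\mu)}$ and parametrising $\Gamma_1$ by $z=1+t(\zeta_0-1)$ gives $\frac{dz}{1-z}=-\frac{dt}{t}$, and similarly on $\Gamma_2$, so $\frac{dz}{1-z}=\varepsilon_m\bigl|\frac{dz}{1-z}\bigr|$ on $\gamma_{m,k}$ with $|\varepsilon_m|=1$; hence
\[
\Phi_{m,k,j}(\zeta)=\frac{\varepsilon_m}{2\pi i}\,\bigl\langle K(\cdot,\zeta),e_{m,k,j}\bigr\rangle_{L^2(\gamma_{m,k},\,|dz/(1-z)|)}.
\]
On the circular arcs $\gamma_{0,k}\subset\Gamma_0$ I would write $\frac{dz}{1-z}=g(z)\frac{dz}{z}$ with $g(z)=\frac{z}{1-z}$ holomorphic and bounded near $\Gamma_0$ (since $1\notin\Gamma_0$), and use that $\frac{dz}{z}$ has constant argument along $\Gamma_0$, obtaining $\Phi_{0,k,j}(\zeta)=\frac1{2\pi}\langle K(\cdot,\zeta)g,e_{0,k,j}\rangle_{L^2(\gamma_{0,k},\,|dz/z|)}$. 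In each case $e_{m,k,j}$ is a unit vector orthogonal to all polynomials of degree $<j$, so for any such polynomial $P$, Cauchy--Schwarz together with the bounded total masses $\int_{\gamma_{m,k}}|dz/(1-z)|=\log\rho$ (from (\ref{log})) and $\int_{\gamma_{0,k}}|dz/z|=\delta/\sin\mu$ gives, writing $h_\zeta$ for $K(\cdot,\zeta)$ (resp. $K(\cdot,\zeta)g$),
\[
\bigl|\Phi_{m,k,j}(\zeta)\bigr|\ \lesssim\ \sup_{\gamma_{m,k}}\bigl|h_\zeta-P\bigr|.
\]

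Next I would establish the holomorphic extension and the resulting geometric decay in $j$. The only singularity of $z\mapsto K(z,\zeta)$ is the simple pole at $z=\zeta\in B_\alpha$; by construction $\overline{D_{m,k}}$ does not meet $\partial B_\alpha$ and its centre lies on $\partial B_\mu\setminus\{1\}\subset\mathbb{C}\setminus\overline{B_\alpha}$, so by connectedness $\overline{D_{m,k}}\subset\mathbb{C}\setminus\overline{B_\alpha}$, whence $h_\zeta$ is holomorphic on a neighbourhood of $\overline{D_{m,k}}$. Moreover $D_{m,k}$ has radius equal to the length of $\gamma_{m,k}$ for $m=1,2$ and equal to the arc-length $\delta$ of $\gamma_{0,k}$ for $m=0$, so (after rescaling to a fixed model, and for $\rho>1$ close enough to $1$ and $\delta$ small) the segment/arc $\gamma_{m,k}$ sits inside a Bernstein ellipse which is a fixed compact subset of $D_{m,k}$. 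A standard Bernstein--Walsh estimate then gives $\inf_{\deg P<j}\sup_{\gamma_{m,k}}|h_\zeta-P|\lesssim 2^{-j}\sup_{D_{m,k}}|h_\zeta|$. Finally I would insert the size of $K$: for $m=0$ the crude bound $|1-z|,|1-\zeta|\lesssim 1$ with $|z-\zeta|\ge\text{dist}(\Gamma_0,\partial B_\alpha)>0$ and $g$ bounded gives $\sup_{D_{0,k}}|h_\zeta|\lesssim 1$, hence $|\Phi_{0,k,j}(\zeta)|\lesssim 2^{-j}$; for $m=1,2$, since $\zeta$ satisfies (\ref{H1}) with the given $r$, estimate (\ref{majKzzeta}) gives $\sup_{D_{m,k}}|K(\cdot,\zeta)|\lesssim\rho^{-|k-r|/2}$, hence $|\Phi_{m,k,j}(\zeta)|\lesssim 2^{-j}\rho^{-|k-r|/2}$; taking $c$ to be the largest implied constant yields (\ref{H2}).

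The hard part will be the uniformity in $k$ of the exponential decay in $j$. This rests on the two structural facts above — that $h_\zeta$ is holomorphic on the whole disc $D_{m,k}$ (this is precisely why the construction keeps $\overline{D_{m,k}}$ off $\partial B_\alpha$), and that $D_{m,k}$ is not too small compared with $\gamma_{m,k}$ — so that the Bernstein--Walsh rate is a number independent of $k$; the rate can be made at least $2$ by the choices of $\rho$ and $\delta$ already available. Keeping track of the constant arguments of $\frac{dz}{1-z}$ and $\frac{dz}{z}$ and of the bounded holomorphic weight $g=z/(1-z)$ on $\Gamma_0$ is routine bookkeeping.
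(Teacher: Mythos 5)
Your proposal is correct and follows essentially the same route as the paper: both arguments use the constant argument of the differential together with the orthogonality of $e_{m,k,j}$ to polynomials of degree $<j$, the analyticity of the kernel on the disc $D_{m,k}$ (whose radius is comparable to the length of $\gamma_{m,k}$, uniformly in $k$), and the bound (\ref{majKzzeta}), to extract the factor $2^{-j}\rho^{-\left|k-r\right|/2}$ (and the crude bound on $zK(z,\zeta)/(1-z)$ for $m=0$). The only difference is bookkeeping: you bound the inner product via a best-polynomial-approximation estimate (Bernstein--Walsh, or simply truncation of the Taylor series, since $\gamma_{m,k}$ lies in the disc of radius half that of $D_{m,k}$) times the bounded total mass of the measure, whereas the paper expands $K(\cdot,\zeta)$ in a power series on $D_{m,k}$, controls its coefficients by Bessel--Parseval on $\partial D_{m,k}$, and integrates the tail against $\left|e_{m,k,j}\right|$ using (\ref{log}).
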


\begin{proof}
We start proving the second estimate. 
Let $m\in\{1,2\}$ and $k\geq 0$. For any fixed $\zeta\in B_\alpha$, the restriction
of $K(\cdot,\zeta)$ to $D_{m,k}$ is analytic.
Recall (\ref{Radius}) and consider the normalised power series expansion, 
\begin{equation*}
K(z,\zeta) = \sum_{n=0}^{\infty} b_{m,k,n} 
\left( \dfrac{z-z_{m,k}}{s_k} \right)^n.
\end{equation*}
Assume the estimate (\ref{H1}). Then according to (\ref{majKzzeta}), we have
\begin{equation} \label{estKzzeta}
\text{sup} \left\lbrace \left| K(z,\zeta) \right| \,:\, z \in  D_{m,k}
\right\rbrace \lesssim \rho^{-\frac{\left| k-r \right|}{2}}
\end{equation}
Using Bessel-Parseval in $H = L^2\left( \partial D_{m,k}, 
\frac{\left| dz \right| }{2\pi s_k} \right)$ 
and (\ref{estKzzeta}), one obtains
\begin{equation} \label{BesselParseval}
\left( \sum_{n=0}^{\infty} \left| b_{m,k,n} \right|^2 \right)^\frac{1}{2} =  
\left\| K(\cdot,\zeta) \right\|_H = \left( \int_{\partial D_{m,k}} \left| 
K(z,\zeta) \right|^2\frac{|dz|}{2\pi s_k} \right)^\frac{1}{2}  
\lesssim \rho^{-\frac{\left| k-r \right|}{2}}.
\end{equation}

By construction, $\gamma_{m,k}$ is included in the ball centered 
at $z_{m,k}$ with radius $\frac{s_k}{2}$. 
Hence for any  
$z \in \gamma_{m,k}$ and any integer $N \geq 0$, we have 
\begin{align*}
\sum_{n=j}^{\infty} \left|  b_{m,k,n} 
\left( \dfrac{z-z_{m,k}}{s_k} \right)^n \right| &
\leq \left( \sum_{n=j}^{\infty} \left| b_{m,k,n} \right|^2 \right)^\frac{1}{2} 
\left( \sum_{n=j}^{\infty} \left| 
\dfrac{z-z_{m,k}}{s_k} \right|^{2n} \right)^\frac{1}{2}\\
& \leq \left( \sum_{n=j}^{\infty} \left| b_{m,k,n} \right|^2 \right)^\frac{1}{2}
\left( \sum_{n=j}^{\infty} 4^{-n}\right)^{\frac12}\,
\lesssim  \rho^{-\frac{\left| k-r \right|}{2}}\, 2^{-j}.
\end{align*}
Now recall that in $L^2\bigl(\gamma_{m,k},\left| \frac{dz}{1-z} \right|\bigr)$,
$e_{m,k,j}$ is orthogonal to every polynomial function with 
degree $<j$, hence orthogonal 
to $(z-z_{m,k})^n$ for any $n<j$. Further $\frac{dz}{1-z}$ is the opposite 
of $\left| \frac{dz}{1-z} \right|$. This implies that
\begin{align*}
\left| \Phi_{m,k,j}(\zeta) \right|  
& = \left| \dfrac{1}{2\pi i} \int_{\gamma_{m,k}} 
\overline{e_{m,k,j}(z)}\, \sum_{n=j}^{\infty} b_{m,k,n} 
\left( \dfrac{z-z_{m,k}}{s_k} \right)^n \dfrac{dz}{1-z} \right| \\
& \lesssim \rho^{-\frac{\left| k-r \right|}{2}}\, 2^{-j} 
\int_{\gamma_{m,k}} \left| e_{m,k,j}(z)\right| \,\left| 	\dfrac{dz}{1-z} \right| .												
\end{align*}
Applying (\ref{log}), we deduce the second estimate in (\ref{H2}).

The proof of the first estimate is similar, using the 
fact that on each $\gamma_{0,k}$, $\frac{dz}{z}$ is proportional 
to $\left| \frac{dz}{z} \right|$, and 
replacing (\ref{estKzzeta}) by the observation that the set
$$
\biggl\{\frac{z K(z,\zeta)}{1-z}\, :\, z\in \bigcup\limits_{k=0}^{N} D_{0,k},\, \zeta\in B_\alpha\biggr\}
$$
is bounded.
\end{proof}

\begin{proof}[Proof of Proposition \ref{decompFranksMcIntosh}]
Lemma \ref{seriephi} implies that for any $p>0$ and $m\in\{0,1,2\}$,
\begin{equation}\label{summation}
\sup\left\lbrace \sum_{k,j=0}^{\infty} \left| \Phi_{m,k,j}(\zeta)\right|^p \, :\, 
\zeta \in B_{\alpha} \right\rbrace \,<\infty.
\end{equation}

Let $h\in H^\infty(B_\beta)$. By Cauchy's formula,
\begin{equation} \label{Cauchy}
h(\zeta) = \dfrac{1}{2\pi i} \int_{\partial B_\mu} h(z) K(z,\zeta) \,
\dfrac{dz}{1-z}, \qquad \zeta \in B_\alpha.
\end{equation}
For $m=1,2$, $k\geq 0$ and $j\geq 0$, set
\begin{equation}\label{ai}
a_{m,k,j} = \int_{\gamma_{m,k}} h(z) \, e_{m,k,j}(z)\,\left| \dfrac{dz}{1-z} \right|\,.
\end{equation}
Likewise, for $0\leq k\leq N$ and $j\geq 0$, set
\begin{equation}\label{ai2}
a_{0,k,j} = \int_{\gamma_{0,k}} h(z) \, e_{0,k,j}(z)\,\left| \dfrac{dz}{z} \right|\,.
\end{equation}
By the Cauchy-Schwarz inequality and (\ref{log}), we have a uniform estimate
\begin{equation} \label{CSalphamkj}
\left| a_{m,k,j} \right| 
\lesssim\,
\left\| h \right\|_{\infty, B_\beta},
\end{equation}
for $m\in\{0,1,2\}$, $k\geq 0$ and $j\geq 0$.

For $m=1,2$ and $k\geq 0$, let
$H_{m,k}$ denote the subspace of all polynomial functions of 
$L^2(\gamma_{m,k},\left| \frac{dz}{1-z} \right|)$.  
This is a dense subspace. Hence we have a
series expansion 
\begin{equation}\label{hL2}
h_{\vert \gamma_{m,k}} =  \sum_{j=0}^{\infty} a_{m,k,j} \overline{e_{m,k,j}}
\end{equation}
in the latter space.

Likewise, for $0\leq k\leq N$, let $H_{0,k}$ 
denote the subspace of all polynomial functions of 
$L^2(\gamma_{0,k},\left| \frac{dz}{z} \right|)$. 
This is no longer a dense subspace. 
However by Runge's approximation Theorem (see e.g. \cite[Theorem 13.8]{Rud}), every holomorphic 
function on an open neighborhood of $\gamma_{0,k}$ is uniformly 
approximated by polynomials, hence belongs to
$\overline{H_{0,k}}^{\left\| \cdot \right\|_2}$. 
This implies that the series expansion (\ref{hL2}) holds true as well
in this case.

From (\ref{Cauchy}), we can write $h(\zeta)= h_0(\zeta)+h_1(\zeta) + h_2(\zeta)$ for any
$\zeta \in B_\alpha$, where
$$
h_m(\zeta)=\frac{1}{2\pi i} \int_{\Gamma_m} h(z)K(z,\zeta)\, \frac{dz}{1-z}\,.
$$
for each $m=0,1,2$.
The $L^2$-convergence in (\ref{hL2}) a fortiori holds in the $L^1$-sense, hence

\begin{align*}
h_m(\zeta) & =  \frac{1}{2\pi i} \sum_{k=0}^{\infty} 
\int_{\gamma_{m,k}} h(z) K(z,\zeta)\, \dfrac{dz}{1-z} \\
& = \frac{1}{2\pi i} \sum_{k=0}^{\infty}\sum_{j=0}^{\infty} a_{m,k,j}
\int_{\gamma_{m,k}} \overline{e_{m,k,j}(z)}\, 
K(z,\zeta)\, \dfrac{dz}{1-z}\,,
\end{align*}
and hence
\begin{equation}\label{hm}
h_m(\zeta)
= \sum_{k=0}^{\infty}\sum_{j=0}^{\infty} a_{m,k,j} \Phi_{m,k,j}(z).
\end{equation}

After a suitable reindexing, we obtain the result by combining
(\ref{CSalphamkj}), (\ref{hm}) and (\ref{summation}).
\end{proof}

\begin{proof}[Proof of Theorem \ref{FranksMcIntoshplusvariables}]
The case $d=1$ was settled at the end of Remark \ref{InnOut1}.

Assume  that $d=2$. Let $h\in H^\infty(B_{\beta_1}\times B_{\beta_2})$.
Let $(\Phi_{2,i})_{i\geq 1}$ be the sequence of 
$H^{\infty}_{0}(B_{\alpha_2})$ obtained by
applying Proposition \ref{decompFranksMcIntosh} to the couple
$(\alpha_2,\beta_2)$. For any $\zeta_1\in B_{\beta_1}$, the 
one variable function $h(\zeta_1,\cdotp)$
belongs to $H^\infty(B_{\beta_2})$. Hence we have a decomposition
$$
h(\zeta_1,\zeta_2) = \sum_{i=1}^{\infty} a_i(\zeta_1)\Phi_{2,i}(\zeta_2),
\qquad \zeta_1\in B_{\beta_1},\ \zeta_2\in B_{\alpha_2},
$$
with a uniform estimate $\vert a_i(\zeta_1)\vert\leq C_2\norm{h}_{\infty,
B_{\beta_1}\times B_{\beta_2}}$. Recall from the proof of 
Proposition \ref{decompFranksMcIntosh} that 
the complex numbers $a_i(\zeta_1)$ are defined by (\ref{ai}) and (\ref{ai2}). 
This implies that each $a_i\colon B_{\beta_1}\to\Cdb$ is a holomorphic
function. Further 
the above estimates show that for any $i\geq 1$,
$a_i\in H^\infty(B_{\beta_1})$ with $\norm{a_i}_{\infty, B_{\beta_1}}\leq C_2\norm{h}_{\infty,
B_{\beta_1}\times B_{\beta_2}}$.

Let $(\Phi_{1,i})_{i\geq 1}$ be the sequence of 
$H^{\infty}_{0}(B_{\alpha_1})$ obtained by
applying Proposition \ref{decompFranksMcIntosh} to the couple
$(\alpha_1,\beta_1)$. Applying the latter to each $a_i$, we deduce the
existence of a family $(a_{ij})_{i,j\geq 1}$ of complex numbers such that
$$
\vert a_{ij}\vert \leq C_1C_2 \norm{h}_{\infty,
B_{\beta_1}\times B_{\beta_2}},\qquad i,j\geq 1,
$$
for some constant $C_1>0$, and
$$
a_i(\zeta_1)= \sum_{j=1}^{\infty} a_{ij}\Phi_{1,j}(\zeta_1),\qquad
\zeta_1\in B_{\alpha_1},\ i\geq 1.
$$
Since $\sum_j\vert\Phi_{1,j}(\zeta_1)\vert\,<\infty\,$ and 
$\sum_i\vert\Phi_{2,i}(\zeta_2)\vert\,<\infty\,$
for any $(\zeta_1,\zeta_2)\in B_{\alpha_1}\times B_{\alpha_2}$, we deduce
from above that
$$
h(\zeta_1,\zeta_2) = \sum_{i,j=1}^{\infty} a_{ij} \Phi_{1,j}(\zeta_1)\Phi_{2,i}(\zeta_2),\qquad
(\zeta_1,\zeta_2)\in B_{\alpha_1}\times B_{\alpha_2}.
$$
Now using Remark \ref{InnOut1} as in the case $d=1$, we deduce the result in the case $d=2$.

The general case is obtained by iterating this process.
\end{proof}

\bigskip
\noindent
{\bf Acknowledgements.} 
The two authors were supported by the French 
``Investissements d'Avenir" program, 
project ISITE-BFC (contract ANR-15-IDEX-03).


\begin{thebibliography}{99}
\bibitem{Akc} M. A. Akcoglu and L. Sucheston, {\it Dilations of 
positive contractions on $L_p$ spaces}, Canad. Math. Bull. Vol. \textbf{20} (3), 1977.
\bibitem{Al} D. Albrecht, {\it Functional calculi of commuting unbounded operators},
Ph.D. Thesis (Monash University, Melbourne, Australia, 1994).
\bibitem{AFLM} C. Arhancet, S. Fackler and C. Le Merdy, 
{\it Isometric dilations and $H^\infty$ calculus for bounded 
analytic semigroups and Ritt operators},  Trans. Amer. Math. Soc. 369 (2017), no. 10, 6899-6933.
\bibitem{AL} C. Arhancet and C. Le Merdy, {\it Dilation of Ritt operators on $L_p$-spaces}, 
Isra\"el J. Math. 201 (2014), no. 1, 373-414.
\bibitem{An} T. Ando, {\it On a pair of commuting contractions}, Acta Sci. Math. 24 (1963), 88-90.
\bibitem{Bu} D. L. Burkholder, {\it Martingales and singular integrals in Banach spaces},
pp. 233-269 in ``Handbook of the geometry of Banach spaces, Vol. I", North-Holland, Amsterdam, 2001.
\bibitem{CCL}  G. Cohen, C. Cuny and M. Lin, 
{\it Almost everywhere convergence of powers of some positive $L_p$ contractions},
J. Math. Anal. Appl. 420 (2014), no. 2, 1129-1153. 
\bibitem{CDMY} M. Cowling, I. Doust, A. McIntosh, and A. Yagi, {\it Banach
space operators with a bounded $H^{\infty}$ functional calculus},
J. Aust. Math. Soc., Ser. A  60 (1996), 51-89.
\bibitem{DJT} J. Diestel, H. Jarchow and A. Tonge, {\it Absolutely summing operators}, 
Cambridge University Press, 1995.
\bibitem{Fa} S. Fackler, {\it On the structure of semigroups on 
$L_p$ with a bounded $H^\infty$-calculus}, Bull. Lond. Math. Soc. 46 (2014), no. 5, 1063-1076.
\bibitem{Fe} G. Fendler, {\it Dilations of one parameter semigroups of positive contractions on
$L_p$ spaces}, Canad. J. Math. 49 (1997), no. 4, 736-748. 
\bibitem{FMI} E. Franks and A. McIntosh, {\it Discrete quadratic estimates and 
holomorphic functional calculi in Banach spaces}, Bull. Austral. 
Math. Soc. 58 (1998), 271-290. 
\bibitem{HH} B. Haak and M. Haase, {\it Square function fstimates and functional calculi}, Preprint 2013
(arXiv:1311.0453).
\bibitem{Haa} M. Haase, {\it The functional calculus for sectorial operators}, 
(Birkhäuser, Berlin, 2006).
\bibitem{Hyt1} T. Hyt\" onen, J. van Neerven, M. Veraar and L. Weis, 
{\it Analysis in Banach spaces I}, Springer, 2016.
\bibitem{KW1} N. J. Kalton and L. Weis,
{\it The $H^\infty$-calculus and sums of closed operators}, Math. Ann. 321 (2001), no. 2, 319-345. 
\bibitem{KW2} N. J. Kalton and L. Weis,
{\it The $H^\infty$-functional calculus and square function estimates}, pp. 716-764 in 
``Nigel J. Kalton selecta. Vol. 1." Edited by Fritz Gesztesy, Gilles Godefroy, Loukas Grafakos 
and Igor Verbitsky. Contemporary Mathematicians. Birkh\"auser-Springer, 2016. 
\bibitem{KuW} P. Kunstmann and L. Weis,
{\it Maximal $L_p$-regularity for parabolic equations, Fourier multiplier theorems and 
$H^\infty$-functional calculus}, pp. 65-311 in ``Functional analytic methods for evolution equations", 
Lecture Notes in Math., 1855, Springer, Berlin, 2004.
\bibitem{LLLM} F. Lancien, G. Lancien and C. Le Merdy, {\it A joint functional 
calculus for sectorial operators with commuting resolvents}, Proc. 
London Math. Soc. 77 (1998), no. 3, 387-414.
\bibitem{LM96} C. Le Merdy,  {\it On dilation theory for $c_0$-semigroups on 
Hilbert space}, Indiana Univ. Math. J. 45 (1996), no. 4, 945-959.
\bibitem{LM} C. Le Merdy, {\it $H^\infty$ functional calculus and square 
function estimates for Ritt operators}, Rev. Mat. Iberoam. 30 (2014), 1149-1190.
\bibitem{LMX} C. Le Merdy and Q. Xu, {\it Maximal therorems and square functions 
for analytic operators on $L^p$-spaces}, J. Lond. Math. Soc. (2) 86 (2012), no. 2, 343-365.
\bibitem{LMX2} C. Le Merdy and Q. Xu, {\it Strong $q$-variation inequalities for analytic semigroups}, 
Ann. Inst. Fourier 62 (2012), no. 6, 2069-2097. 
\bibitem{LT2} J. Lindenstrauss and L. Tzafriri, {\it Classical Banach spaces II}, 
Springer, Berlin, 1979.
\bibitem{Ly} Y. Lyubich, {\it Spectral localization, power boundedness and invariant 
subspaces under Ritt's type condition}, Studia Math. 134 (1999), no. 2, 153-167. 
\bibitem{MI} A. McIntosh, {\it Operators which have an $H^\infty$ 
functional calculus}, Proc. CMA Canberra
14 (1986), 210-231.
\bibitem{NZ} B. Nagy and J. Zemanek, {\it A resolvent condition implying power boundedness},
Studia Math. 134 (1999), no. 2, 143-151. 
\bibitem{Nev} O. Nevanlinna, {\it Convergence of iterations for linear equations},
Lectures in Mathematics ETH Zürich, Birkhäuser Verlag, Basel, 1993. 
\bibitem{Paz} A. Pazy, {\it Semigroups of linear operators and applications 
to partial differential equations}, Springer, Berlin, 1992.
\bibitem{P3} G. Pisier, {\it Martingales in Banach spaces}, 
Cambridge University Press, 2016.
\bibitem{P2} G. Pisier, {\it Some results on Banach spaces without 
local unconditional structure}, Compositio Mathematica, Vol. 37, Fasc. 1, 1978, 3-19.
\bibitem{P4} G. Pisier, {\it Joint similarity problems and the generation of operator 
algebras with bounded length}, Integral Equations Operator Theory 31 (1998), no. 3, 353-370.
\bibitem{P} G. Pisier, {\it Similarity problems and completely bounded maps}
(Second, expanded edition), Lecture Notes in Mathematics, 1618 Springer-Verlag, 
Berlin, 2001. viii+198 pp. 
\bibitem{Rud} W. Rudin, {\it Real and complex analysis}
(Third edition), McGraw-Hill Book Co., New York, 1987.
\bibitem{We} L. Weis, {\it A new approach to maximal $L_p$-regularity}, pp. 195-214 in 
``Evolution equations and their applications in physical and life sciences (Bad Herrenalb, 1998)", 
Lecture Notes in Pure and Appl. Math., 215, Dekker, New York, 2001. 
\end{thebibliography}
\end{document}